%%
%% This is file `skeleton.tex',
%% generated with the docstrip utility.
%%
%% The original source files were:
%%
%% nuthesis.dtx  (with options: `skeleton')
%% 

%%
%% For common degrees, you can use the class options:
%% phd, edd, ms, ma
%% phd is the default
\documentclass[11pt]{amsart}

%%%Packages%%%

\usepackage[margin=1in]{geometry}

\usepackage{amsthm}
\usepackage{amssymb}
\usepackage{yhmath}
\usepackage{enumerate}
\usepackage{tikz-cd}
\usepackage{tkz-graph}
\usetikzlibrary{arrows}

\usepackage{mathrsfs}

%%%Theorem%%%
\theoremstyle{plain}
\newtheorem{thm}{Theorem}[section]

\newtheorem{prop}[thm]{Proposition}
\newtheorem{lemma}[thm]{Lemma}
\newtheorem{cor}[thm]{Corollary}
\newtheorem*{heuristic}{Heuristic}

\newtheorem{thmx}{Theorem}
 % "letter-numbered" theorems - for first time they appear
\newtheorem{thmy}{Theorem}
 % "letter-numbered" theorems - for second time they appear

\newtheorem*{assumption}{Assumption}

\newtheorem*{question}{Question}

\theoremstyle{definition}  %Following theorems have plane text

\newtheorem{example}[thm]{Example}
\newtheorem{defn}[thm]{Definition}

%%%Commands%%%

% For image
\newcommand{\ran}{\mbox{\normalfont{ran}}}
\newcommand{\Span}{\mbox{\normalfont{span}}}
\newcommand{\Alg}{\mbox{\normalfont{Alg}}}
\newcommand{\ind}{\mbox{\normalfont{ind}}}

% Add Common Symbols

\newcommand{\C}{\mathbb{C}}
\newcommand{\A}{\mathfrak{A}}
\newcommand{\J}{\mathfrak{J}}
\newcommand{\BH}{\mathscr{B}(\mathscr{H})}
\newcommand{\HH}{\mathscr{H}}
\newcommand{\KK}{\mathscr{K}}

\newcommand{\KH}{\mathscr{K}(\mathscr{H})}

\newcommand{\Adw}{\mbox{Ad}_W}
\newcommand{\Adv}{\mbox{Ad}_V}
\newcommand{\Adu}{\mbox{Ad}_U}
%Add Mor

%Add Tensors
%\newcommand{\alg}{\otimes_{\mbox{\tiny alg}}}

%End Defintion

%%%%%%%%%%%%%%%%%%%%%%%%%%%%%%%%%%%%%%%%%
\usepackage{xcolor}

%%%%%%%%%%%%%%%%%%%%%%%%%%%%%%%%%%%%%%%%%
\begin{document}

\title{Operator Algebras Generated by Left Invertibles}
\author{Derek DeSantis}

%%
%% For most people the defaults will be correct, so they are commented
%% out. To manually set these, just uncomment and make the needed
%% changes.
%% \college{Your college}
%% \city{Your City}
%%
%% For most people the following can be changed with a class
%% option. To manually set these, just uncomment the following and
%% make the needed changes.
%% \doctype{Thesis or Dissertation}
%% \degree{Your degree}
%% \degreeabbreviation{Your degree abbr.}
%%
%% Now that we know everything we need, we can generate the title page
%% itself.
%%
\maketitle
%%
%% You have a maximum of 350 words for your abstract, which includes
%% your title, name, etc.
%%
%% Required
\begin{abstract}
	 Operator algebras generated by partial isometries and their adjoints form the basis for some of the most well studied classes of C*-algebras. The primary object of this paper is the norm-closed operator algebra generated by a left invertible $T$ together with its Moore-Penrose inverse $T^\dagger$.   We denote this algebra by $\A_T$.  In the isometric case, $T^\dagger = T^*$ and $\A_T$ is a representation of the Toeplitz algebra. Of particular interest is the case when $T$ satisfies a non-degeneracy condition called analytic.  We show that $T$ is analytic if and only if $T^*$ is Cowen-Douglas.  When $T$ is analytic with Fredholm index $-1$, the algebra $\A_T$ contains the compact operators, and any two such algebras are boundedly isomorphic if and only if they are similar. 
	 
	 %Representations of such algebras encode the dynamics of orthonormal sets in a Hilbert space. We instigate a research program on concrete operator algebras that model the dynamics of Hilbert space frames. 
\end{abstract}

%% Optional
%% \begin{copyrightpage}
%% \end{copyrightpage}

%% Optional
%% \begin{dedication}
%% \end{dedication}

% Optional

%% Optional
%% \begin{grantinfo}
%% \end{grantinfo}
%% The ToC is required
%% Uncomment these if need be

%% The ToC is required
\tableofcontents
%% Uncomment these if need be
%\listoffigures
%\listoftables
%%
%% ``Real'' beginning of the document.
%% mainmatter is needed to set the page numbering correctly
%%   mainmatter is needed after the ToC, (LoF, and LoT) to set the
%%   page numbering correctly for the main body

%% Thesis goes here
%%%%%%%%%%%%%%%%%%%%%%%%%%%%%%%%%%%%%%%%
%%%%%%%%%%%%%%%%%%%%%%%%%%%%%%%%%%%%%%%%
%%%%%%%%%%%%%%%%%%%%%%%%%%%%%%%%%%%%%%%%
%%%%%%%%%%%%%%%%%%%%%%%%%%%%%%%%%%%%%%%%
%%%%%%%%%%%%%%%%%%%%%%%%%%%%%%%%%%%%%%%%
%%%%%%%%%%%%%%%%%%%%%%%%%%%%%%%%%%%%%%%%
%%%%%%%%%%%%%%%%%%%%%%%%%%%%%%%%%%%%%%%%
%%%%%%%%%%%%%%%%%%%%%%%%%%%%%%%%%%%%%%%%
\section{Introduction}
%%%%%%%%%%%%%%%%%%%%%%%%%%%%%%%%%%%%%%%%
%%%%%%%%%%%%%%%%%%%%%%%%%%%%%%%%%%%%%%%%
%%%%%%%%%%%%%%%%%%%%%%%%%%%%%%%%%%%%%%%%
%%%%%%%%%%%%%%%%%%%%%%%%%%%%%%%%%%%%%%%%
%%%%%%%%%%%%%%%%%%%%%%%%%%%%%%%%%%%%%%%%
%%%%%%%%%%%%%%%%%%%%%%%%%%%%%%%%%%%%%%%%
%%%%%%%%%%%%%%%%%%%%%%%%%%%%%%%%%%%%%%%%
%%%%%%%%%%%%%%%%%%%%%%%%%%%%%%%%%%%%%%%%

If $T \in \BH$ is any operator with closed range, then it follows from the open mapping theorem that there exists a unique operator $T^\dagger \in \BH$ such that
	\begin{enumerate}[i.]
		\item $\ker(T^\dagger) = \ran(T)^\perp = \ker(T^*)$
		\item $T^\dagger T x = x$ for each $x \in \ker(T)^\perp$.
	\end{enumerate}
The operator $T^\dagger$ that satisfies (i) and (ii) above is called the \textbf{Moore-Penrose Inverse of T}. The Moore-Penrose inverse behaves like a left inverse for $T$ on the domain where it makes sense. In particular if $T$ is left invertible, then $T^\dagger$ is the natural choice for a left inverse.  One can show that if $T$ is left invertible, then 
\[
T^\dagger = (T^* T)^{-1} T^*.
\]
Hence, if $T$ is isometric then $T^\dagger = T^*$.

Representations of operator algebras are often formed by choosing sufficiently nice linear maps on a Hilbert space that encode the features of some underlying algebraic object. Often, these maps are rigid in the sense that they will preserve Hilbert space structure from the domain into their range. For example, representations of C*-algebras are often formed by utilizing partial isometries, which preserve orthonormality of a set.  Left invertible operators, and more generally closed ranged operators, may not preserve orthonormality of sets.  However, they do preserve the looser Hilbert space structure of a \textit{frame}. This discussion lays the groundwork for a natural extension of C*-algebras of isometries, one that codifies frames over orthonormal bases. For further details, see \cite{DeSantis}.
 
% The previous discussion lays the groundwork for a natural extension of C*-algebras of isometries, one that codifies frames over orthonormal bases.  One arrives at a such an extension by replacing \textit{partial isometries and their adjoints} with \textit{closed range operators and their Moore-Penrose inverses}.  Therefore, by replacing all instances of ``unitary'' with ``invertible'', we arrive at a natural generalization of concrete C*-algebras - one that integrates dynamics of frame theory over orthonormal bases.  For further details, see \cite{DeSantis}.

Given a left invertible operator $T$ and its Moore-Penrose inverse $T^\dagger$, we define the operator algebra
\[
\A_T:= \overline{\mbox{\Alg}}(T,T^\dagger).
\]
where the closure is in the operator norm. If $T$ is an isometry, then its Moore-Penrose inverse $T^\dagger$ is $T^*$.  If $T$ is purely isometric (no unitary summand) with Fredholm index $-1$, then $T$ is unitarily equivalent to $T_z$ on $H^2(\mathbb{T})$.  Hence, $\A_T$ is the Toeplitz algebra $\mathcal{T}$. This representation is particularly nice, as every operator $A \in \mathcal{T}$ can be uniquely represented as a compact perturbation of a Toeplitz operator with continuous symbol.  The purpose of this paper is to understand the following question:

\begin{question}
    To what extent does $\A_T$ resemble the representations of C*-algebras generated by an isometry?
%	To what extent do the elements of $\A_T$ have the form ``compact perturbation of a continuous function''?
\end{question}

 The paper is organized as follows. In the second section, we review the background material needed for this paper.  This includes basic properties of left invertible operators required throughout the work, and elementary facts about $\A_T$ frequently used.  We discover that if the Fredholm index of $T$ is finite, $\A_T$ has the following description:

\begin{heuristic}
	If $T$ has finite Fredholm index, then the operators in $\A_T$ are compact perturbations of Laurent series.
\end{heuristic}
Our goal is to explore the extent to which this intuitive description is true. In that direction, we a canonical basis associated to $T$ is discovered.  We justify that in order to make any serious progress understanding the rich structure of $\A_T$, we need to restrict ourselves to a subclass of left invertible operators, known as analytic operators.   

In the third section, we discuss Cowen-Douglas operators, a class of operators that have rich analytic structure. In that section, we connect analyticity of $T$ to the class of Cowen-Douglas operators. Given an open set $\Omega \subset \C$ and a positive integer $n$, the operators in the Cowen-Douglas class $B_n(\Omega)$ are defined in Definition \ref{CDdef}. We prove the following connection:

\begin{thmx}
	Let $T \in \BH$ be  left invertible operator with Fredholm index equal to $-n$, for a positive integer $n \in \mathbb{N}$.  Then the following are equivalent:
	
	\begin{enumerate}[i.]
		
		\item $T$ is analytic
		
		\item ${T^\dagger}^*$ (the Cauchy Dual of $T$) is analytic
		
		\item There exists $\epsilon > 0$ such that $T^* \in B_n(\Omega)$ for $\Omega = \{z: |z|<\epsilon\}$
		
		\item There exists $\epsilon > 0$ such that $T^\dagger \in B_n(\Omega)$ for $\Omega = \{z: |z|<\epsilon\}$.
		
	\end{enumerate}

\end{thmx}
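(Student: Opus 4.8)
The plan is to route everything through the Cauchy dual $T' := {T^\dagger}^{*} = T(T^{*}T)^{-1}$ and reduce the whole statement to the single equivalence (i)$\Leftrightarrow$(iii). First I would record the elementary algebra of the Moore--Penrose inverse and Cauchy dual of a left invertible $T$ of index $-n$: one has $T^\dagger = (T^{*}T)^{-1}T^{*}$, $T^{*}T' = I$, the operator $T'$ is again left invertible of index $-n$, and $(T')^{*} = T^\dagger$, $(T')^\dagger = T^{*}$, $(T')' = T$, while $\ker(T^{*}) = \ker(T^\dagger) =: W$ with $\dim W = n$. In particular, statement (iv) for $T$ is literally statement (iii) for $T'$, and statement (ii) for $T$ is statement (i) for $T'$; so once (i)$\Leftrightarrow$(iii) is proved for all such operators, applying it to $T'$ gives (ii)$\Leftrightarrow$(iv), and only (i)$\Leftrightarrow$(ii) remains.

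For (i)$\Leftrightarrow$(iii): fix a disk $\Omega=\{|z|<\epsilon\}$ with $\epsilon$ small enough that $T-\bar w$ is bounded below for every $w\in\Omega$ (possible since boundedness below is an open condition) and that the geometric series for $(I-wT')^{-1}$ converges on $\Omega$. Since $T$ is Fredholm of index $-n$ and the index is locally constant, $T^{*}-w$ is surjective with $\dim\ker(T^{*}-w)=n>0$ for every $w\in\Omega$; thus three of the four defining conditions of $B_{n}(\Omega)$ hold automatically, and $T^{*}\in B_{n}(\Omega)$ reduces to the spanning condition $\bigvee_{w\in\Omega}\ker(T^{*}-w)=\HH$ (using that $B_n$ membership passes to sub-disks, so the choice of $\epsilon$ is harmless). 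Next I would compute the eigenspaces: from $T^{*}T'=I$ one checks that $\ker(T^{*}-w)=(I-wT')^{-1}W$ for $w\in\Omega$, and expanding $(I-wT')^{-1}=\sum_{j\ge0}w^{j}(T')^{j}$ shows $\bigvee_{w\in\Omega}\ker(T^{*}-w)=\overline{\sum_{j\ge0}(T')^{j}W}$. Taking orthogonal complements and using $((T')^{j})^{*}=(T^\dagger)^{j}$ together with $\ker(T^{*})^{\perp}=\overline{\ran T}$ gives
\[
\Bigl(\overline{{\textstyle\sum_{j}}(T')^{j}W}\Bigr)^{\perp}=\{y:(T^\dagger)^{j}y\in\ran T\ \text{for all }j\ge0\}=\bigcap_{k\ge0}\ran(T^{k}),
\]
the last equality a short induction on the identity that $TT^\dagger$ is the orthogonal projection onto $\ran T$. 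Hence $T^{*}\in B_{n}(\Omega)$ iff $\bigcap_{k}\ran(T^{k})=\{0\}$ iff $T$ is analytic.

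It remains to prove (i)$\Leftrightarrow$(ii): $T$ analytic iff $T'$ analytic. This is where I expect the real difficulty, and it is presumably the point at which finiteness of the index is essential — for weighted shifts on directed trees, where the index can be infinite, the Cauchy dual of an analytic operator need not be analytic. By the computation above applied to both $T$ and $T'$, the claim is the symmetric statement that $T$ and $T'$ share the wandering subspace property relative to the common wandering subspace $W$, namely $\overline{\sum_{j}T^{j}W}=\HH \iff \overline{\sum_{j}(T')^{j}W}=\HH$. One structural fact holds for free and should be the lever: for every $k$,
\[
\HH=\ran(T^{k})\,\dotplus\,\sum_{j<k}T^{j}W ,
\]
proved by induction from $\HH=\ran T\oplus W$ together with the dimension count $\operatorname{codim}\ran(T^{k})=kn=\dim\sum_{j<k}T^{j}W$ (so in fact each $T^{j}W$ has dimension exactly $n$), and the analogous decomposition holds for $T'$. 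The plan is to combine this with the Cowen--Douglas structure already extracted — that $T^{*}\in B_{n}(\Omega)$, so that $T$ is unitarily equivalent to multiplication by $z$ on a reproducing kernel Hilbert space of holomorphic $\mathbb{C}^{n}$-valued functions near $0$, a model which visibly has the wandering subspace property — to upgrade analyticity of $T$ to the wandering subspace property, and symmetrically deduce analyticity of $T'$. Chasing through this model (or, alternatively, a direct Fredholm argument ruling out a nonzero $T^{*}$-invariant closed subspace contained in $\bigcap_{k}\ran(T^{k})$ when the index is finite) is the main obstacle; the rest is bookkeeping with the identities of the first paragraph, after which (i)$\Leftrightarrow$(iii), (i)$\Leftrightarrow$(ii), and (ii)$\Leftrightarrow$(iv) together close the cycle.
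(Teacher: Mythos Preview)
Your reduction via the Cauchy dual and your proof of (i)$\Leftrightarrow$(iii) are correct and match the paper's approach: both identify $\ker(T^*-w)=(I-wT')^{-1}W$ and reduce the spanning axiom of $B_n(\Omega)$ to $\overline{\sum_j(T')^jW}=\HH$. Your orthogonal-complement computation showing $\bigl(\overline{\sum_j(T')^jW}\bigr)^\perp=\bigcap_k\ran(T^k)$ is essentially a direct proof of Shimorin's decomposition $\HH=\HH_I\oplus\HH_A'$, which the paper invokes as Proposition~\ref{anal decomp}; the paper instead checks the spanning axiom via the Cowen--Douglas lemma $\bigvee_k\ker(R-\lambda_0)^k=\HH$ (Lemma~\ref{Li lemma}). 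Either route works, and yours is arguably more self-contained.

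The genuine gap is (i)$\Leftrightarrow$(ii), which you correctly flag as the crux but do not close. Your RKHS appeal fails: the Cowen--Douglas structure gives $\bigvee_k\ker(T^{*k})=\HH$, but since $(T')^\dagger=T^*$ one has $\ker(T^{*k})=\bigvee_{j<k}(T')^jW$ (Proposition~\ref{ker dagger n} applied to $T'$), so this only recovers $\HH_A'=\HH$, i.e.\ the analyticity of $T$ you began with, not the wandering-subspace property $\HH_A=\HH$ for $T$. Your alternative Fredholm sketch is vacuous as stated (when $T$ is analytic, $\bigcap_k\ran(T^k)$ is already zero), and if redirected at $\HH_I'=\bigcap_k\ran((T')^k)$, that subspace is indeed $T^*$-invariant but bears no evident relation to $\HH_I$. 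The paper fills this gap with a concrete biorthogonality argument (Theorem~\ref{sbasis} and Corollary~\ref{T T' anal}): from $T$ analytic one has $\HH_A'=\HH$, so the $T'$-orbit $\{x_{i,j}'\}$ furnishes a basis; given $z\perp T^mW$ for all $m$, the identity $\langle z,x_{i,m}\rangle=\langle(I-TT^\dagger)(T^*)^mz,x_{i,0}\rangle$ reads off the coefficients of $z$ in the $T'$-orbit and forces them all to vanish, whence $z=0$, $\HH_A=\HH$, and $\HH_I'=0$. That computation is the missing ingredient in your plan.
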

This result has several applications.  First, it gives an analytic model for representing $T$ in the sense that $T$ is unitarily equivalent to $M_z$ on a reproducing kernel Hilbert space of analytic functions. This further furnishes our description of $\A_T$ as ``compacts plus Laurent series''.  It also provides us with a decomposition theorem. If $T$ is an isometry, the Wold decomposition lets us decompose $T$ into a direct sum of Fredholm index $-1$ isometries (and a unitary).  A corollary of Theorem A is that we cannot reduce our study to the case where the Fredholm index of $T$ is $-1$. Rather, $T \sim \oplus T_j$ where each $T_j$ are strongly irreducible operators - operators that are analogous to Jordan blocks in $\BH$.

 Theorem A also allows us to analyze the isomorphism classes of $\A_T$ in the case when the Fredholm index of $T$ is $-1$.  This is carried out in Section Four.  Here, we determine the conditions for two such algebras to be isomorphic, establishing our main theorem.  It gives a rather rigid structure on bounded isomorphisms between the algebras $\A_T$:

\begin{thmx}	
	Let $T_i$, $i=1,2$ be left invertibles (analytic with Fredholm index $-1$) and $\A_i = \A_{T_i}$.  Suppose that  $\phi:\A_1 \rightarrow \A_2$ a bounded isomorphism.  Then there exists some invertible $V \in \BH$ such that $\phi(A) = V A V^{-1}$ for all $A \in \A_1$. 
\end{thmx}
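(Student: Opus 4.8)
The plan is to use the analytic model furnished by Theorem A to realize each $\A_i$ as an algebra of operators on a reproducing kernel Hilbert space of analytic functions on a disc, with $T_i$ unitarily equivalent to $M_z$. Since the Fredholm index is $-1$, Theorem A gives $T_i^* \in B_1(\Omega_i)$, so $T_i$ is (up to unitary equivalence) $M_z$ on a space $\HH_i$ of analytic functions with a one-dimensional ``deficiency''; thus each $\A_i$ contains $\KH$ (as promised by the abstract) and $\A_i/\KH$ is a commutative algebra — in the Toeplitz case this is $C(\mathbb{T})$, and in general a quotient reflecting the ``compacts plus Laurent series'' heuristic. First I would pin down what $\phi$ does to the ideal of compacts: $\KH$ is the unique minimal closed ideal (or can be characterized intrinsically inside $\A_i$, e.g.\ as operators whose image in the semisimple quotient vanishes, or via the socle), so $\phi(\KH) = \KH$, and $\phi$ induces a bounded algebra isomorphism $\KH \to \KH$. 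By the classical fact that every (bounded) algebra automorphism of $\KH$ is spatial — implemented by some bounded invertible $V_0 \in \BH$ via $A \mapsto V_0 A V_0^{-1}$ — we get a candidate $V$ on the nose, but only a priori implementing $\phi$ on the compacts.

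Next I would upgrade this to all of $\A_1$. Replacing $\phi$ by $\Adv_{0}^{-1}\circ \phi$, we may assume $\phi$ fixes $\KH$ pointwise. The key step is then to show that any bounded isomorphism $\psi:\A_1 \to \A_2$ that is the identity on $\KH$ is implemented by a scalar — i.e.\ $\psi = \mathrm{id}$ after absorbing a further invertible, or more precisely that such a $\psi$ forces $\A_1 = \A_2$ as subalgebras of $\BH$ and $\psi$ is inner. The mechanism: for $A \in \A_1$ and any rank-one $K \in \KH$, $\psi(AK) = \psi(A)\psi(K) = \psi(A) K$ and also $\psi(AK) = AK$ (since $AK \in \KH$), so $(\psi(A) - A)K = 0$ for all rank-one $K$, hence $\psi(A) = A$. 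This shows $\psi$ is the identity and in particular $\A_1 = \A_2$; unwinding, the original $\phi$ is $\Adv_0$ restricted to $\A_1$, with $V = V_0$. The genuinely load-bearing input here is that $\A_i$ contains $\KH$ and that $\KH$ is recognizable intrinsically (so that $\phi$ must preserve it); both should follow from the index $-1$ Cowen–Douglas structure established earlier in the paper.

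The main obstacle I anticipate is proving $\phi(\KH) = \KH$ using only that $\phi$ is a \emph{bounded} algebra isomorphism, not a $*$-isomorphism: one cannot invoke C*-algebra machinery, so I need a purely algebraic/topological characterization of $\KH$ inside $\A_i$ that is automatically preserved by bounded isomorphisms. Candidates are: $\KH$ is the largest ideal on which the quotient norm is trivial, or $\KH$ coincides with the closed ideal generated by the minimal idempotents (the closure of the socle), or $\KH$ is the set of $A \in \A_i$ with $A\A_i A$ ``small'' in an appropriate sense — any of these is isomorphism-invariant. A related subtlety is that the quotient $\A_i/\KH$ must be shown to have no nonzero finite-rank-like pieces that could be confused with $\KH$; this is where analyticity (non-degeneracy) is essential, since it guarantees $\bigcap_n T_i^n \HH_i = 0$ and hence that the only compact operators arising in the ``Laurent tail'' are the genuine ones. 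Once the ideal is identified, the rest is the soft spatiality argument above, so I expect the write-up to be short modulo that one structural lemma.
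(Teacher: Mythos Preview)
Your approach is essentially the paper's: show $\phi|_{\KH} = \Adv$ for some invertible $V$, then extend to all of $\A_1$ via the essentiality argument $(\phi(A) - \Adv(A))K = 0$ for all $K \in \KH$. Two remarks on details: your anticipated obstacle dissolves, since the paper has already shown that $\KH$ is contained in \emph{every} nonzero closed ideal of $\A_i$, making it the unique minimal closed ideal and hence automatically $\phi$-invariant (this is exactly your ``socle'' candidate, and the Cowen--Douglas model plays no further role in the proof beyond guaranteeing $\KH\subset\A_i$); and for the spatiality step the paper does not cite ``bounded automorphisms of $\KH$ are spatial'' as a black box but rather invokes Christensen's theorem that bounded representations of nuclear C*-algebras are similar to $*$-representations, applied to $\phi|_{\KH}:\KH\to\BH$, and then identifies the image of the resulting $*$-representation with $\KH$ using minimality and preservation of rank-one projections --- the same underlying fact, organized slightly differently.
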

In particular, this theorem shows that all bounded isomorphisms are completely bounded, and reduces the isomorphism problem to a similarity orbit problem.   We remark that the problem of finding the similarity orbit of Cowen-Douglas operators is classic. Using the results of Jiang and others on $K_0$ groups of strongly irreducible operators, we complete the classification in this case.

We conclude Section Four by investigating a class of illustrative examples arising from the theory of subnormal operators. If $S$ is a subnormal operator, we let $N = mne(S)$ denote the minimal normal extension of $S$, and $\sigma_{ap}(S)$ denote the approximate point spectrum of $S$. We show that in this class, $\A_S$  can be described by the heuristic of compact perturbations of Toeplitz operators with Laurent series:

\begin{thmx}
	Let $S$ be an analytic left invertible, Fredholm index $-1$, essentially normal, subnormal operator with $N:=mne(S)$ such that $\sigma(N) = \sigma_{ap}(S)$.  Let $\mathscr{B}$ be the uniform algebra generated by the functions $z$ and $z^{-1}$ on $\sigma_e(S)$. Then 
	\[
	\A_S = \{T_f + K: f \in \mathscr{B}, K \in \KH  \}.
	\]
	Moreover, the representation of each element as $T_f + K$ is unique. 
\end{thmx}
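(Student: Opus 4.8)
Set up notation following Abrahamse--Douglas Toeplitz theory on subnormal operators. Let $N$ act on a Hilbert space containing $\HH$ with $N\HH \subseteq \HH$ and $N|_{\HH} = S$, and let $P \in \BH'$ denote the orthogonal projection onto $\HH$ inside that larger space. For $f$ continuous on $\sigma(N)$ put $T_f = Pf(N)|_{\HH}$; then $T_\bullet$ is linear, contractive ($\|T_f\|\le \|f(N)\| = \|f\|_{C(\sigma(N))}$), $T_z = S$, and $T_f T_z = T_{fz}$ exactly (since $z(N)=N$ preserves $\HH$). The first task is the spectral bookkeeping. Since $S$ is essentially normal, $\pi(S)$ is normal in the Calkin algebra, so $\sigma_{le}(S) = \sigma_{re}(S) = \sigma_e(S)$; since $S$ is analytic with Fredholm index $-1$, Theorem A gives an analytic model $S \cong M_z$ on an RKHS of analytic functions, whence $\sigma_p(S) = \emptyset$ and therefore $\sigma_{ap}(S) = \sigma_{le}(S)$. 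Combining these with the hypothesis $\sigma(N) = \sigma_{ap}(S)$ yields $\sigma_e(S) = \sigma(N)$; in particular every $f \in \mathscr{B}$ is a genuine continuous function on $\sigma(N)$, so $T_f$ is defined, and $0 \notin \sigma_e(S)$, so $\pi(S)$ is invertible in the Calkin algebra.

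Next I establish compactness of semicommutators. A direct computation using $PNP = NP$ and normality of $N$ gives $[S^*,S] = C^*C$, where $C = (I-P)N^*|_{\HH}$ is the Hankel operator with symbol $\bar z$; since $AA^*$ is compact iff $A$ is, essential normality of $S$ is \emph{equivalent} to $C$ compact, equivalently to $[P,N]$ compact. Leibniz's rule then makes $[P, p(N,N^*)]$ compact for every polynomial $p$, and uniform (Stone--Weierstrass) approximation on the compact set $\sigma(N)$ upgrades this to $[P,f(N)]$ compact for all $f \in C(\sigma(N))$. Consequently the Hankel operators $(I-P)f(N)|_{\HH} = -(I-P)[P,f(N)]|_{\HH}$ are compact, and expanding $T_fT_g - T_{fg} = -Pf(N)(I-P)g(N)P$ shows $T_fT_g \equiv T_{fg} \pmod{\KH}$ for all $f,g \in \mathscr{B}$; in particular $f \mapsto \pi(T_f)$ is a unital homomorphism of $\mathscr{B}$ into the Calkin algebra. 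Moreover $T_{1/z}T_z = T_1 = I$ exactly, so $T_{1/z}$ is a left inverse of $S$; since $(T_{1/z} - S^\dagger)S = 0$ and $\operatorname{ran} S$ has codimension one, $T_{1/z} - S^\dagger$ has rank at most one.

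Now the two inclusions. Let $\mathscr{A} = \{T_f + K : f \in \mathscr{B},\ K \in \KH\}$. By the preceding paragraph $\mathscr{A}$ is a subalgebra of $\BH$, it contains $S = T_z$ and $S^\dagger = T_{1/z} + (\text{rank} \le 1)$, and it is norm-closed: the unital homomorphism $\beta \colon \mathscr{B} \to \text{Calkin}$, $\beta(f) = \pi(T_f)$, sends $z \mapsto \pi(S)$, which is normal with spectrum $\sigma_e(S)$, so $\beta$ is isometric for the sup-norm of $\mathscr{B}$ on $\sigma_e(S)$; hence $\beta(\mathscr{B})$ is complete, thus closed, and $\mathscr{A} = \pi^{-1}(\beta(\mathscr{B}))$ is closed. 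Therefore $\A_S = \overline{\Alg}(S, S^\dagger) \subseteq \mathscr{A}$. For the reverse inclusion, $\KH \subseteq \A_S$ by the earlier theorem (an analytic left invertible of index $-1$ has $\A_S \supseteq \KH$); then $T_z = S \in \A_S$ and $T_{1/z} = S^\dagger - (\text{rank}\le1) \in \A_S + \KH = \A_S$, and by the homomorphism property every Laurent monomial $T_{z^k}$ agrees modulo $\KH$ with a word in $S, S^\dagger$, so $T_p \in \A_S + \KH = \A_S$ for every Laurent polynomial $p$; since $f \mapsto T_f$ is contractive and Laurent polynomials are dense in $\mathscr{B}$, $T_f \in \A_S$ for all $f \in \mathscr{B}$, giving $\mathscr{A} \subseteq \A_S$. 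Uniqueness is immediate: if $T_f + K = T_g + L$ then $\beta(f-g) = \pi(T_{f-g}) = 0$, and $\beta$ isometric forces $f = g$, hence $K = L$.

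\textbf{Main obstacle.} The crux is the middle paragraph: translating essential normality of the subnormal operator $S$ into the operator identity $[S^*,S] = C^*C$ and thence into compactness of \emph{all} Hankel operators $(I-P)f(N)|_{\HH}$, $f \in \mathscr{B}$, which is what makes the symbol calculus $\mathscr{B} \to \BH/\KH$ a homomorphism. Coupled to this is the spectral identification $\sigma_e(S) = \sigma(N)$ of the first paragraph, which is exactly where both the essential-normality hypothesis and the analytic-model hypothesis (absence of point spectrum) are indispensable; without it neither "$T_f$ is defined on $\mathscr{B}$" nor "$\beta$ is isometric" is available, and the whole argument collapses.
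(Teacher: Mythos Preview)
Your proof is correct, and the overall skeleton---$T_{1/z}$ is a left inverse, the symbol map $f\mapsto \pi(T_f)$ is a multiplicative isometry from $\mathscr{B}$ into the Calkin algebra, $\KH\subset\A_S$, and then two inclusions---is the same as the paper's. The difference is in what is cited versus proved.

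The paper does not carry out your middle paragraph at all. Instead it quotes as a black box the structure theorem of Olin, Thomson, Keough and McGuire for irreducible, essentially normal subnormal operators (Theorem~\ref{C-sub} in the paper), which already packages the three facts you establish by hand: (a) $\sigma_{ap}(S)=\sigma_e(S)$, (b) $T_fT_g-T_{fg}\in\KH$, and (c) $\|T_f+\KH\|=\|f\|_{\sigma_e(S)}$. With that in hand, the paper simply observes that $T_{z^{-1}}$ is a left inverse (so $\A_S=\overline{\Alg}(T_z,T_{z^{-1}})$ by Corollary~\ref{any left}), that $T_{z^k}\in\A_S$ for all $k$ since $\A_S\supset\KH$, and then runs the same density/isometry argument you give in your last paragraph. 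Your route instead re-proves the relevant fragment of Theorem~\ref{C-sub}: you extract $\sigma_e(S)=\sigma(N)$ from essential normality plus $\sigma_p(S)=\emptyset$ (via the analytic model), you obtain the semicommutator compactness from the Hankel identity $[S^*,S]=C^*C$ together with Leibniz and Stone--Weierstrass, and you get the isometry of $\beta$ from the continuous functional calculus on the normal element $\pi(S)$. This buys you a self-contained argument that makes transparent exactly where each hypothesis (essential normality, analyticity, $\sigma(N)=\sigma_{ap}(S)$) enters, at the cost of length; the paper's version is shorter but leans entirely on the cited literature.
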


\section{Properties of Left Invertible Operators and $\A_T$	}
%%%%%%%%%%%%%%%%%%%%%%%%%%%%%%%%%%%%%%%%
%%%%%%%%%%%%%%%%%%%%%%%%%%%%%%%%%%%%%%%%
%%%%%%%%%%%%%%%%%%%%%%%%%%%%%%%%%%%%%%%%
%%%%%%%%%%%%%%%%%%%%%%%%%%%%%%%%%%%%%%%%
%%%%%%%%%%%%%%%%%%%%%%%%%%%%%%%%%%%%%%%%
%%%%%%%%%%%%%%%%%%%%%%%%%%%%%%%%%%%%%%%%
%%%%%%%%%%%%%%%%%%%%%%%%%%%%%%%%%%%%%%%%
%%%%%%%%%%%%%%%%%%%%%%%%%%%%%%%%%%%%%%%%

The focus of this section is elementary properties of left invertible operators and the algebra $\A_T$.  We will begin by discussing some basic facts about left invertible operators frequently used.  In order to make meaningful headway, we impose a Fredholm condition on our left invertibles.  We then discover some coarse properties of the algebra $\A_T$, noting that a dense set may be written as finite rank operator plus polynomials in $T$ and $T^\dagger$. This initiates our description of $\A_T$ as compact perturbations of Laurent series.   Drawing on analogies with isometric operators, we describe a non-degeneracy condition of left invertible operators called analytic. This allows one to build a type of basis on which $T$ acts like a shift operator. We conclude this section by demonstrating that one cannot hope to recover a decomposition exactly like the Wold decomposition for left invertible operators. 

%%%%%%%%%%%%%%%%%%%%%%%%%%%%%%%%%%%%%%%%
%%%%%%%%%%%%%%%%%%%%%%%%%%%%%%%%%%%%%%%%
%%%%%%%%%%%%%%%%%%%%%%%%%%%%%%%%%%%%%%%%
%%%%%%%%%%%%%%%%%%%%%%%%%%%%%%%%%%%%%%%%
\subsection{Basics of Left Invertible Operators}
%%%%%%%%%%%%%%%%%%%%%%%%%%%%%%%%%%%%%%%%
%%%%%%%%%%%%%%%%%%%%%%%%%%%%%%%%%%%%%%%%
%%%%%%%%%%%%%%%%%%%%%%%%%%%%%%%%%%%%%%%%
%%%%%%%%%%%%%%%%%%%%%%%%%%%%%%%%%%%%%%%%

For convenience in our discussion, we now list several well known facts about left invertable operators.  For proofs of these results, see \cite{DeSantis}.

\begin{prop}
	\label{left_facts}
%	\label{left inverses}
	Given any left invertible $T \in \BH$, the following hold:
	\begin{enumerate}[i.]
		\item $T T^\dagger$ is the (orthogonal) projection onto $\mbox{ran}(T)$
		\item $I-T T^\dagger$ is the (orthogonal) projection onto $\mbox{ran}(T)^\perp$
		\item $\ker(T^\dagger) = \ran(T)^\perp = \ker(T^*)$
		\item $\ran(T^\dagger) = \ran(T^*)$.
	\end{enumerate}
	Furthermore, every left inverse is of the form
	\[
	L = T^\dagger + A (I - T T^\dagger).
	\] 
	for some $A \in \BH$. If $S \in \BH$ satisfies $\|T - S\| < \|T^\dagger\|^{-1}$, then $S$ is also left invertible.  The operator $(T^\dagger S)^{-1} T^\dagger$ is a left inverse of $S$.  
\end{prop}

This paper will be concerned with the case when $\dim(\ran(T)^\perp) < \infty$.  This Fredholm assumption on $T$ will make the theory more interesting.  Furthermore, our interest is in left invertible operators which are not invertible.  We therefore make the following definition which summarizes these assumptions:

\begin{defn}
	An left invertible operator $T \in \BH$ is said to be \textbf{natural} if the $\dim(\ker(T^*))$ is a natural number.  Specifically,
\[
0<\dim(\ker(T^*)) = \dim(\ran(T)^\perp) < \infty
\]
\end{defn}
 Note that if $T$ is a natural left invertible, then $\ker(T^*)$ is a positive integer.  Hence, $T^*$ is not invertible, so neither is $T$. Moreover, we have the following Fredholm properties associated to natural left invertibles:

\begin{prop}
	\label{ess spec}
	Let $T$ be a natural left invertible.  Then $0 \in \sigma(T)$, and $0 \notin \sigma_e(T)$.  Indeed, $T$ is Fredholm with $\mbox{\ind}(T) = - \mbox{dim}(\ker(T^\dagger)) = -\mbox{\ind}(T^\dagger)$.
\end{prop}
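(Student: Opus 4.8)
The plan is to verify each of the three assertions ($0 \in \sigma(T)$, $0 \notin \sigma_e(T)$, and the index computation) directly from the structural facts already established, chiefly Propositions \ref{equivalent_left} and \ref{T projection}. First I would dispatch $0 \in \sigma(T)$: since $T$ is natural, $\dim(\ker(T^*)) > 0$, so $\ran(T)^\perp \neq \{0\}$ and $T$ is not surjective; hence $T$ is not invertible and $0 \in \sigma(T)$. (One could alternatively note that $T^*$ has nontrivial kernel, so $T^*$ is not invertible, and invertibility of $T$ is equivalent to that of $T^*$.)

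Next I would show $T$ is Fredholm, which simultaneously gives $0 \notin \sigma_e(T)$. By Proposition \ref{equivalent_left}, $T$ left invertible implies $T$ is injective and has closed range, so $\dim(\ker T) = 0 < \infty$ and $\ran(T)$ is closed. By the naturality hypothesis, $\dim(\ker(T^*)) = \dim(\ran(T)^\perp) < \infty$, which is exactly the cokernel condition. Thus $T$ is a Fredholm operator, so its image in the Calkin algebra is invertible, i.e. $0 \notin \sigma_e(T)$. The index is then $\ind(T) = \dim(\ker T) - \dim(\ker T^*) = 0 - \dim(\ker T^*) = -\dim(\ker T^*)$.

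It remains to identify $\dim(\ker T^*)$ with $\dim(\ker T^\dagger)$ and to relate $\ind(T)$ to $\ind(T^\dagger)$. The first is immediate from Proposition \ref{T projection}(iii), which states $\ker(T^\dagger) = \ran(T)^\perp = \ker(T^*)$; hence $\ind(T) = -\dim(\ker(T^\dagger))$. For the last equality, I would observe that $T^\dagger$ is itself Fredholm: by Proposition \ref{T projection} we have $\ker(T^\dagger) = \ker(T^*)$, which is finite dimensional, and $\ran(T^\dagger) = \ran(T^*) = \ran(T^\dagger T) \supseteq \ker(T)^\perp$; since $T$ is injective, $\ker(T)^\perp = \HH$, so $T^\dagger$ is surjective with finite-dimensional kernel. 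Therefore $\ind(T^\dagger) = \dim(\ker T^\dagger) - 0 = \dim(\ker T^\dagger) = -\ind(T)$, completing the chain of equalities.

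I do not anticipate a genuine obstacle here — every ingredient is already in hand from the preceding propositions, and the proof is essentially a matter of assembling them in the right order. The only point requiring a moment's care is the claim that $\ran(T^\dagger) = \ran(T^*)$ equals all of $\HH$ when $T$ is left invertible; this follows because left invertibility forces $\ker(T) = \{0\}$, so that $T^\dagger T = I$ on all of $\HH$ and hence $\ran(T^\dagger) = \HH$, making $T^\dagger$ surjective.
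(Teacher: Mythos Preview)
Your proposal is correct and follows essentially the same path as the paper's proof: verify $0\in\sigma(T)$ from non-invertibility, check Fredholmness directly, compute the index from $\ker(T)=0$ and $\ker(T^\dagger)=\ker(T^*)$, and finish by showing $T^\dagger$ has trivial cokernel. The only cosmetic difference is that the paper shows $T$ is Fredholm by observing $I-TT^\dagger$ is finite rank (so $T$ is invertible in the Calkin algebra), and handles the last step by noting $(T^\dagger)^*=T(T^*T)^{-1}$ is injective rather than, as you do, that $T^\dagger T=I$ forces $T^\dagger$ surjective; these are equivalent observations.
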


\begin{proof}
	Since $T$ is not invertible, $0 \in \sigma(T)$. As  $\dim(\ran(T)^\perp) < \infty$, and $I- T T^\dagger$ is the projection onto the $\ran(T)^\perp$, $T$ is invertible in $\BH / \KH$.  Therefore, $T$ is Fredholm.  Because $T$ is injective, the Fredholm index of $T$ is 
	\[
	\mbox{\ind}(T) = \mbox{dim}(\ker(T)) - \mbox{dim}(\ker(T^*)) = -\mbox{dim}(\ker(T^\dagger)).
	\]
	Note that $(T^\dagger)^* = T(T^* T)^{-1}$.  Hence $(T^\dagger)^*$ is injective, so that 
	\[
	\mbox{\ind}(T^\dagger) = \mbox{dim}(\ker(T^\dagger)) - \mbox{dim}(\ker((T^\dagger)^*)) = \mbox{dim}(\ker(T^\dagger)) = - \mbox{\ind}(T).
	\]
\end{proof}

\begin{cor}
	If $T$ is a natural left invertible, then all left inverses $L$ of $T$ are finite rank perturbations of $T^\dagger$.  Hence, all left inverses $L$ of $T$ are Fredholm with index $\mbox{\ind}(L) = \mbox{dim}(\ker(T^\dagger)) = \mbox{\ind}(T^\dagger)$.
\end{cor}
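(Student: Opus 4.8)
The plan is to read the statement directly off of Proposition \ref{left inverses} together with the finite-dimensionality built into the definition of a natural left invertible. First I would recall that, by Proposition \ref{left inverses}, every left inverse $L$ of $T$ can be written as $L = T^\dagger + A(I - T T^\dagger)$ for some $A \in \BH$. By Proposition \ref{T projection}, $I - T T^\dagger$ is the orthogonal projection onto $\ran(T)^\perp$, and since $T$ is natural we have $\dim(\ran(T)^\perp) < \infty$, so this projection has finite rank. Consequently $A(I - T T^\dagger)$ has finite rank for \emph{any} choice of $A$, and hence $L - T^\dagger$ is finite rank. This is precisely the assertion that every left inverse $L$ is a finite rank perturbation of $T^\dagger$.

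For the Fredholm claim I would invoke Proposition \ref{ess spec}: since $T$ is natural, $T^\dagger$ is Fredholm with $\ind(T^\dagger) = \dim(\ker(T^\dagger)) = -\ind(T)$. Finite rank operators are compact, and a compact (in particular, finite rank) perturbation of a Fredholm operator is again Fredholm with the same index; applying this standard stability fact to $L = T^\dagger + A(I - T T^\dagger)$ yields that $L$ is Fredholm with $\ind(L) = \ind(T^\dagger) = \dim(\ker(T^\dagger))$, as claimed.

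There is no real obstacle here: the corollary is a direct packaging of Propositions \ref{left inverses}, \ref{T projection}, and \ref{ess spec} with the invariance of the Fredholm index under compact perturbation. The only points requiring any care are making explicit that ``natural'' forces $I - T T^\dagger$ to be finite rank (so that $A(I - T T^\dagger)$ is finite rank independently of $A$), and citing the index-stability theorem; neither step involves any computation.
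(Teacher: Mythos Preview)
Your argument is correct and is exactly the reasoning the paper has in mind: the corollary is stated without proof precisely because it follows immediately from Proposition \ref{left inverses} (the form $L = T^\dagger + A(I-TT^\dagger)$), Proposition \ref{T projection} (that $I-TT^\dagger$ projects onto the finite-dimensional $\ran(T)^\perp$), and Proposition \ref{ess spec} (Fredholmness and index of $T^\dagger$), together with index stability under compact perturbation. There is nothing to add.
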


\begin{prop}
	\label{close ind}
	If $T, S \in \BH$, $T$ is a natural left invertible and $\|T - S\| < \|T^\dagger\|^{-1}$, then $S$ is Fredholm with $\mbox{\ind}(S) = \mbox{\ind}(T)$.
\end{prop}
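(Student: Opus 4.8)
The plan is to combine the standard stability of the Fredholm property under small perturbations with the quantitative left-inverse estimate already established in Lemma~\ref{inverse norm}. First I would invoke Lemma~\ref{inverse norm}: since $\|T-S\| < \|T^\dagger\|^{-1}$, the operator $T^\dagger S$ is invertible, and $L := (T^\dagger S)^{-1} T^\dagger$ is a left inverse of $S$. In particular $S$ is bounded below, hence injective with closed range by Proposition~\ref{equivalent_left}, so $\ker(S) = 0$. This already gives half of the index computation, namely $\dim\ker(S) = 0 = \dim\ker(T)$; it remains to show $S$ is Fredholm and that $\dim\ker(S^*) = \dim\ker(T^*)$.

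Next I would control the cokernel. The cleanest route is to observe that $T$ is Fredholm (Proposition~\ref{ess spec}), so it is invertible in the Calkin algebra $\BH/\KH$; since $\|T-S\|$ is small and the Calkin quotient map is norm-decreasing, $S$ is a small perturbation of an invertible element of $\BH/\KH$, hence itself invertible in $\BH/\KH$, i.e.\ $S$ is Fredholm. (Here one does not even need the hypothesis to be as tight as $\|T^\dagger\|^{-1}$; any bound guaranteeing invertibility in the Calkin algebra suffices, and $\|T^\dagger\|^{-1} \le \|T + \KH\|^{-1}_{\mathrm{Calkin}}^{-1}$ — more simply, $S$ Fredholm follows from $T$ Fredholm and $\|T-S\|$ small by the openness of the Fredholm operators.) Then I would invoke the continuity (indeed local constancy) of the Fredholm index on the open set of Fredholm operators: the map $S \mapsto \ind(S)$ is locally constant, so on the ball $\{S : \|T-S\| < \|T^\dagger\|^{-1}\}$, which is connected and contains $T$, we get $\ind(S) = \ind(T)$.

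Alternatively — and this is the version I would actually write, to keep the argument self-contained and to exploit the explicit left inverse — I would argue directly: from $L = (T^\dagger S)^{-1}T^\dagger$ with $\ker(L) \supseteq \ker(T^\dagger) = \ran(T)^\perp$, one sees $\ran(L)^\perp$ and $\ker(S^*)$ can be compared. Since $LS = I$, $SL$ is an idempotent with $\ran(SL) = \ran(S)$, and $I - SL$ is an idempotent onto a complement of $\ran(S)$; writing $SL = S(T^\dagger S)^{-1}T^\dagger$ and noting $T^\dagger = (T^*T)^{-1}T^*$ has the same range structure as $T^*$, one checks $I - SL$ differs from $I - TT^\dagger$ by a norm-small, hence index-$0$, perturbation, forcing $\dim\ran(I-SL) = \dim\ran(I-TT^\dagger) = \dim\ker(T^*)$. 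Combined with $\ker(S)=0$ this yields $\ind(S) = -\dim\ker(T^*) = \ind(T)$.

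The main obstacle is the cokernel bookkeeping in the self-contained version: $I - SL$ is only an oblique (non-orthogonal) idempotent onto \emph{some} complement of $\ran(S)$, so one must argue that the dimension of that complement equals $\dim \ker(S^*) = \codim \ran(S)$ (true since all algebraic complements of a closed subspace have the same dimension) and then that this dimension is stable under the small perturbation from $T$ to $S$. I expect that the path of least resistance is in fact to cite local constancy of the Fredholm index on the open set of Fredholm operators after establishing $S$ is Fredholm via the Calkin algebra and $\ker(S) = 0$ via Lemma~\ref{inverse norm}; the explicit-idempotent computation can be relegated to a remark.
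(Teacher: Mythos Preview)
Your proposal is correct, but the paper takes a different, more hands-on route that avoids invoking local constancy of the index and the Calkin-algebra argument altogether.

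For Fredholmness, rather than passing to $\BH/\KH$, the paper splits $S$ along the orthogonal decomposition $\HH = \ran(T) \oplus \ran(T)^\perp$: writing $S_1 = P_{\ran(T)}S$ and $S_2 = P_{\ran(T)^\perp}S$, one has $\|S_1 - \tilde T\| < \|T^\dagger\|^{-1}$ where $\tilde T : \HH \to \ran(T)$ is the (invertible) restriction of $T$, so $S_1$ is invertible as a map $\HH \to \ran(T)$; and $S_2$ has finite rank since $\dim\ran(T)^\perp < \infty$. Thus $S$ is a finite-rank perturbation of an invertible, hence Fredholm. For the index, instead of citing continuity of $\ind$, the paper leverages the explicit left inverse $L = (T^\dagger S)^{-1}T^\dagger$ from Lemma~\ref{inverse norm}: since $S$ is now a natural left invertible, Proposition~\ref{left inverses} (via its corollary) gives $S^\dagger = L + K$ for some finite-rank $K$, and then multiplicativity of the index yields $\ind(S^\dagger) = \ind\big((T^\dagger S)^{-1}\big) + \ind(T^\dagger) = \ind(T^\dagger)$, whence $\ind(S) = \ind(T)$ by Proposition~\ref{ess spec}.

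Your Calkin/local-constancy argument is cleaner and more conceptual, and the observation that $\pi(T^\dagger)$ is a two-sided inverse of $\pi(T)$ with $\|\pi(T^\dagger)\| \le \|T^\dagger\|$ makes the quantitative hypothesis do exactly the work you claim. The paper's approach, by contrast, stays entirely within the Moore--Penrose machinery being developed and never appeals to general Fredholm-theory black boxes, which fits the paper's self-contained style. Your ``alternative'' idempotent-comparison sketch is closer in spirit to the paper's index computation, but the paper sidesteps the oblique-idempotent bookkeeping you flagged by working with $S^\dagger$ and multiplicativity of $\ind$ rather than with $I - SL$ directly.
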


\begin{proof}
	Let $\tilde{T}:\HH \rightarrow \ran(T)$ be the restriction of $T$.  Then $\tilde{T}$ is invertible, with $\|\tilde{T}\| = \|T^\dagger\|$.  Therefore, if $A \in \mathscr{B}(\HH, \ran(T))$ with 
	\[
	\|A - \tilde{T}\| < \|\tilde{T}\|^{-1} = \|T^\dagger\|^{-1}
	\]
	then $A$ is invertible as well.  
	
	By assumption, $T$ has closed range.  So, $\HH = \ran(T) \oplus \ran(T)^\perp$.  Write $S = S_1 + S_2$ where $S_1 = P_{\ran(T)} S$ and $S_2 = P_{\ran(T)^\perp} S$.  Then,
	\[
	\|S_1 - \tilde{T}\| = \|P_{\ran(T)}(S - T)\| < \|T^\dagger\|^{-1}.
	\]
	Hence, $S_1$ is invertible.  Moreover since $\dim(\ran(T)^\perp)$ is finite, $S_2 \in \KH$.  Therefore, $S$ is a compact perturbation of an invertible operator, and thus Fredholm. 
	
	By Lemma \ref{left_facts}, $S$ is left invertible with left inverse $L = (T^\dagger S)^{-1} T^\dagger$. By Proposition \ref{left_facts}, $S^\dagger = K + L$ for some compact $K \in \KH$. Therefore, 
	\[
	\mbox{\ind}(S^\dagger) = \mbox{\ind}((T^\dagger S)^{-1} T^\dagger) = \mbox{\ind}((T^\dagger S)^{-1}) = \mbox{\ind}( T^\dagger) = \mbox{\ind}( T^\dagger)
	\]
	By Proposition \ref{ess spec}, $\mbox{\ind}(S) = \mbox{\ind}(T)$.
\end{proof}
\begin{defn}
Given a natural left invertible $T \in \BH$, we reserve the notation
\[
\mathscr{E}:=  \ran(T)^\perp = \ker(T^\dagger) = \ker(T^*).
\]
\end{defn}
For isometric operators, $T^n \mathscr{E} \perp T^m \mathscr{E}$ for all $n \neq m$.  This is not true for general left invertible operators, even though $\mathscr{E}$ is perpendicular to the range of $T$.  However, it is true that $\ker((T^\dagger)^n) = \bigvee_{k=0}^{n-1} T^k \mathscr{E}$:

\begin{prop}
	\label{ker dagger n}
	Let $T$ be a natural left invertible, and $P = I - T T^\dagger$ be the projection onto $\mathscr{E}$. Then for each $n \geq 1$, we have
	\begin{equation}
	\label{poly}
	I - T^n {T^\dagger}^n = \sum_{k=0}^{n-1} T^k P {T^\dagger}^k.
	\end{equation}
	Consequently, 
	\[
	\ker((T^\dagger)^n) = \bigvee_{k=0}^{n-1} T^k \mathscr{E}.
	\]
\end{prop}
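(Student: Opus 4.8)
The plan is to first verify the operator identity \eqref{poly} by a telescoping argument, and then extract the description of $\ker((T^\dagger)^n)$ from the fact that its right-hand side is, up to the $I$, a bounded idempotent. The key elementary fact in play throughout is that, $T$ being left invertible, $T^\dagger T = I$ (Proposition \ref{dagger_form}), and hence $(T^\dagger)^k T^k = I$ for every $k \geq 0$.

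For \eqref{poly}, I would simply rewrite each summand. Using $P = I - TT^\dagger$,
\[
T^k P (T^\dagger)^k = T^k(I - TT^\dagger)(T^\dagger)^k = T^k (T^\dagger)^k - T^{k+1}(T^\dagger)^{k+1},
\]
so the sum $\sum_{k=0}^{n-1} T^k P (T^\dagger)^k$ telescopes to $T^0(T^\dagger)^0 - T^n(T^\dagger)^n = I - T^n(T^\dagger)^n$, which is \eqref{poly}. (An induction on $n$ works equally well, the inductive step being exactly this computation applied to the $k=n$ term.)

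For the consequence, set $E := T^n(T^\dagger)^n$. Since $(T^\dagger)^n T^n = I$, $E$ is an idempotent, so $I-E$ is a bounded idempotent and $\ran(I-E) = \ker(E)$ is closed; moreover $T^n$ is injective (being left invertible), so $\ker(E) = \ker((T^\dagger)^n)$. Now \eqref{poly} expresses $I-E = \sum_{k=0}^{n-1} T^k P (T^\dagger)^k$, and the range of the $k$-th term is contained in $T^k\ran(P) = T^k\mathscr{E}$ (Proposition \ref{T projection}); hence $\ker((T^\dagger)^n) = \ran(I-E) \subseteq \sum_{k=0}^{n-1} T^k\mathscr{E}$, and since $\mathscr{E}$ is finite-dimensional this sum is a closed subspace equal to $\bigvee_{k=0}^{n-1}T^k\mathscr{E}$. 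Conversely, for $0 \leq k \leq n-1$ and $x \in \mathscr{E} = \ker(T^\dagger)$ one has $(T^\dagger)^n T^k x = (T^\dagger)^{n-k}\big((T^\dagger)^k T^k\big)x = (T^\dagger)^{n-k}x = 0$ because $n-k \geq 1$, so $T^k\mathscr{E} \subseteq \ker((T^\dagger)^n)$ for every such $k$ and therefore $\bigvee_{k=0}^{n-1}T^k\mathscr{E} \subseteq \ker((T^\dagger)^n)$. Combining the two inclusions gives the claimed equality.

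The only place requiring genuine care — and essentially the sole obstacle — is resisting the temptation to invoke Proposition \ref{T projection} for the operator $T^n$ directly: $(T^\dagger)^n$ is generally \emph{not} the Moore–Penrose inverse of $T^n$ (the subspaces $T^k\mathscr{E}$ need not be mutually orthogonal, in contrast with the isometric case), so $T^n(T^\dagger)^n$ is an idempotent that need not be self-adjoint. One must therefore argue with the kernel and range of a bounded idempotent rather than with an orthogonal projection; once that is done, everything else is bookkeeping with $T^\dagger T = I$.
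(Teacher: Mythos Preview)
Your proof is correct and follows essentially the same approach as the paper: the telescoping computation for \eqref{poly} is identical, and for the kernel identity both you and the paper show $\bigvee_{k=0}^{n-1}T^k\mathscr{E}\subseteq\ker((T^\dagger)^n)$ directly from $T^\dagger T=I$, then obtain the reverse inclusion by applying \eqref{poly} to $x\in\ker((T^\dagger)^n)$ to write $x=(I-T^n(T^\dagger)^n)x=\sum_k T^kP(T^\dagger)^kx$. Your additional remarks about $E=T^n(T^\dagger)^n$ being a (possibly non-self-adjoint) idempotent and $(T^\dagger)^n$ not being the Moore--Penrose inverse of $T^n$ are correct and useful context, though the paper's proof does not need to invoke them explicitly.
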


\begin{proof}
	By a telescopic sum, $I - T^n {T^\dagger}^n = \sum_{k=0}^{n-1} T^k P {T^\dagger}^k$.  To prove the set equality, suppose $x \in \bigvee_{k=0}^{n-1} T^k \mathscr{E}$.  Then it follows immediately that ${T^\dagger}^n x = 0$.  On the other hand, if $x \in \ker((T^\dagger)^n)$, then by Equation (\ref{poly}),
	\[
	x = (I - T^n {T^{\dagger}}^n)x = \sum_{k=0}^{n-1} T^k P {T^\dagger}^k x.
	\]
	Since $P {T^\dagger}^k x \in \mathscr{E}$ for all $k$, it follows that $x \in \bigvee_{k=0}^{n-1} T^k \mathscr{E}$. 
\end{proof}

%%%%%%%%%%%%%%%%%%%%%%%%%%%%%%%%%%%%%%%%
%%%%%%%%%%%%%%%%%%%%%%%%%%%%%%%%%%%%%%%%
%%%%%%%%%%%%%%%%%%%%%%%%%%%%%%%%%%%%%%%%
%%%%%%%%%%%%%%%%%%%%%%%%%%%%%%%%%%%%%%%%
\subsection{Basic Properties of  $\A_T$}
%%%%%%%%%%%%%%%%%%%%%%%%%%%%%%%%%%%%%%%%
%%%%%%%%%%%%%%%%%%%%%%%%%%%%%%%%%%%%%%%%
%%%%%%%%%%%%%%%%%%%%%%%%%%%%%%%%%%%%%%%%
%%%%%%%%%%%%%%%%%%%%%%%%%%%%%%%%%%%%%%%%

We now analyze the basics of the algebra $\A_T$. If $S$ is an isometry, it dilates to a unitary.  Moreover, if $\mathscr{C}$ is the commutator ideal of $C^*(S)$ and $\pi: C^*(S) \rightarrow C^*(S)/\mathscr{C}$, then $\pi(S)$ is a unitary. When the Fredholm index of $S$ is $-1$, $\mathscr{C} = \KH$, the compact operators.  Consequently, one obtains that the Toeplitz algebra is compact perturbations of continuous functions on the unit circle. 

Analogous statements are also true for general left invertibles.  If $T$ is left invertible, it dialtes to an invertible.  If $\mathscr{C}$ is the commutator ideal of $\A_T$, then and $\pi(T)$ is invertible in $\A_T / \mathscr{C}$.  As a consequence, we use the following heurstic to describe $\A_T$:

\begin{heuristic}
	The algebra $\A_T$ is compact perturbations of Laurent series centered at zero.  
\end{heuristic}

We prove that when the dimension of $\ker(T^*)$ is finite, $\mathscr{C} \subset \KH$. We then show that $\A_T / \mathscr{C}$ consists of formal Laurent polynomials, namely polynomials in $z$ and $z^{-1}$. Moreover $T$ may also be dilated to an invertible, allowing us to identify $\A_T$ as the corner of the algebra generated by this invertible.  Combining these results allows one to parameterize the commutator ideal.

It is important to note that the projection $P = I - T T^\dagger = T^\dagger T - T T^\dagger \in \mathscr{C}$.  We begin this section with some simple observations that will be used throughout the paper:

\begin{lemma}
	Let $T$ be a left invertible operator.  Then $\A_T \subset C^*(T)$.   If $T$ is natural, then $\mathscr{C} \subset \KH$.
\end{lemma}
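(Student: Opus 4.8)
The plan is to show the commutator ideal $\mathscr{C}$ is generated (as a closed two-sided ideal) by the single projection $P = I - TT^\dagger$, which has finite rank when $T$ is natural, and then argue that the closed ideal generated by a finite-rank operator inside $\BH$ is contained in $\KH$. The first half is the real content; the second half is a standard fact.

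First I would establish that $\mathscr{C}$ equals the closed two-sided ideal $\mathscr{J}$ of $\A_T$ generated by $P$. The inclusion $\mathscr{J} \subseteq \mathscr{C}$ is immediate from the preceding lemma, which records $P \in \mathscr{C}$, together with the fact that $\mathscr{C}$ is a closed ideal. For the reverse inclusion, I would look at the quotient $\A_T / \mathscr{J}$. In this quotient the images of $T$ and $T^\dagger$ satisfy $\dot T \dot T^\dagger = 1$ (since $I - TT^\dagger = P \in \mathscr{J}$) and $\dot T^\dagger \dot T = 1$ (since $T^\dagger T = I$ already holds in $\A_T$), so $\dot T$ is invertible with inverse $\dot T^\dagger$; in particular $\dot T$ and $\dot T^\dagger$ commute, hence $\A_T/\mathscr{J}$ is commutative (it is generated by the commuting pair $\dot T, \dot T^\dagger$). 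A commutative quotient means $\mathscr{C} \subseteq \mathscr{J}$, giving $\mathscr{C} = \mathscr{J}$.

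Next I would use naturality: $P = I - TT^\dagger$ is the orthogonal projection onto $\mathscr{E} = \ran(T)^\perp$ by Proposition \ref{T projection}, and $\dim \mathscr{E} = \dim\ker(T^*) < \infty$ by hypothesis, so $P$ is a finite-rank operator, in particular compact. Now $\KH$ is a closed two-sided ideal of $\BH$ containing $P$, and $\mathscr{J}$ is the smallest closed two-sided ideal of $\A_T$ containing $P$; since $\A_T \subseteq \BH$, every element $ATB$ with $A, B \in \A_T$ of the form $A P B$ lies in $\KH$, finite linear combinations of such remain in $\KH$, and $\KH$ is norm-closed, so $\mathscr{J} \subseteq \KH$. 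Therefore $\mathscr{C} = \mathscr{J} \subseteq \KH$.

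The main obstacle is the argument that $\mathscr{C} \subseteq \mathscr{J}$, i.e.\ that modding out by the ideal generated by $P$ really does kill all commutators; the key realization making this painless is that $\A_T/\mathscr{J}$ is generated by the two elements $\dot T$ and $\dot T^\dagger$, and once $\dot T^\dagger$ is a genuine two-sided inverse of $\dot T$ these two generators commute, so the quotient is automatically abelian and the commutator ideal must have already been absorbed into $\mathscr{J}$. Everything else — that $P$ has finite rank, that finite-rank operators are compact, that $\KH$ is a closed ideal of $\BH$ — is routine.
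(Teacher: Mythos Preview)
Your proof is correct and takes a genuinely different route from the paper's. The paper argues by direct computation: it takes monomials $X = T^n {T^\dagger}^m$ and $Y = T^k {T^\dagger}^l$, expands $XY - YX$, and checks through a case analysis (depending on the relative sizes of $n,m,k,l$) that each commutator reduces to an expression involving the finite-rank projection $I - T^{r}{T^\dagger}^{r}$ for some $r$, hence is finite rank; then linear combinations and norm limits land in $\KH$. Your argument instead identifies $\mathscr{C}$ with the closed ideal $\mathscr{J}$ generated by $P = I - TT^\dagger$ via the quotient observation that $\A_T/\mathscr{J}$ is generated by a single invertible element and its inverse, hence commutative, and then invokes the compactness of $P$.

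Your approach is cleaner and avoids the eight-case bookkeeping; it also essentially anticipates the paper's later Proposition~\ref{commutator}, which characterizes $\mathscr{C}$ as $\overline{\Span}\{T^n P {T^\dagger}^m\}$ using exactly the same ``quotient is commutative because the generators become mutual inverses'' idea. The paper's direct computation, on the other hand, yields slightly more: it shows the commutators of monomials are not merely compact but \emph{finite rank}, which feeds into the explicit description in Proposition~\ref{alg simple} of $\Alg(T,T^\dagger)$ as finite-rank perturbations of Laurent polynomials. Your argument gives $\mathscr{C} \subset \KH$ but does not by itself display those commutators as finite rank (though of course that is recoverable once one knows $\mathscr{J}$ is generated by the finite-rank $P$).
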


\begin{proof}
	Since $T^\dagger = (T^*T)^{-1} T^*$, $T^\dagger \in C^*(T)$. Therefore, $\A_T \subset C^*(T)$. Now suppose that $T$ is natural. Let $X = T^n {T^\dagger}^m$ and $Y = T^k {T^\dagger}^l$.  If we can show that $XY - YX$ is finite rank, then it will follow from taking linear combinations and limits that  $\mathscr{C} \subset \KH$.  
	
	To this end, notice that 
	\[
	XY - YX = T^n {T^\dagger}^m T^k {T^\dagger}^l -  T^k {T^\dagger}^l T^n {T^\dagger}^m
	\]
	Now if $m \leq k$,  $ T^n {T^\dagger}^m T^k {T^\dagger}^l = T^{n+k-m} {T^\dagger}^l$.  On the other hand, if $m \geq k$, then $T^n {T^\dagger}^m T^k {T^\dagger}^l = T^n {T^\dagger}^{l+m-k}$. Likewise, $T^k {T^\dagger}^l T^n {T^\dagger}^m = T^{n+k-l} {T^\dagger}^m$ if $l \leq n$ and $T^k {T^\dagger}^{l+m-n}$ otherwise.  Therefore, the expression $T^n {T^\dagger}^m T^k {T^\dagger}^l -  T^k {T^\dagger}^l T^n {T^\dagger}^m$ can be simplified depending on the values of $n,m,k$ and $l$.  This leaves us with eight total cases to check.  For example, two cases arise from $m \geq k$ and $l \geq n$.  By above, if $m \geq k$ and $l \geq n$, then
	\[
	T^n {T^\dagger}^m T^k {T^\dagger}^l - T^k {T^\dagger}^l T^n {T^\dagger}^m = T^n {T^\dagger}^{l+m-k} - T^k {T^\dagger}^{l+m-n}.
	\]
	This leaves us with two sub-cases: either $n \leq k$ or $k \leq n$.  If $n \leq k$, we have 
	\[
	T^n {T^\dagger}^{l+m-k} - T^k {T^\dagger}^{l+m-n} = T^n ( I - T^{k-n} {T^\dagger}^{k-n}) {T^\dagger}^{l + m - k}.
	\]
	By Proposition \ref{ker dagger n}, $ I - T^{k-n} {T^\dagger}^{k-n}$ is a sum of finite rank operators, and thus, $T^n {T^\dagger}^m T^k {T^\dagger}^l -  T^k {T^\dagger}^l T^n {T^\dagger}^m$ is finite rank.  The case when $k \leq n$ is the same.  The other six cases are similar.  
\end{proof}

We now investigate the quotient of $\A_T$ by the commutator ideal $\mathscr{C}$.  Let $\pi$ denote the canonical map $\pi: \A_T \rightarrow \A_T / \mathscr{C}$. As $P = I - T T^\dagger$ is  in $ \mathscr{C},$ it follows that $\pi(T)$ is invertible with inverse $\pi(T^\dagger)$.  Hence, $\A_T / \mathscr{C}$ is a commutative Banach algebra generated by the invertible $\pi(T)$ and  its inverse $\pi(T^\dagger)$.  

Note that if $\A$ is a commutative unital Banach algebra generated by an invertible $a$ and its inverse $a^{-1}$, then the character space $\Omega(\A)$ is homeomorphic to $\sigma(a)$. Hence, the Gelfand map provides a norm decreasing homomorphism of
\[
\Gamma: \A_T / \mathscr{C} \rightarrow C(\sigma(\pi(T))). 
\]
For each $\lambda \in \sigma(\pi(T))$, let $z: \sigma(\pi(T)) \hookrightarrow \C$ represent the inclusion function. Namely, $z(\lambda) = \lambda$ for all $\lambda \in \sigma(\pi(T))$.  Then $z$ is invertible by construction, with inverse $z^{-1}(\lambda):=\lambda^{-1}$ for all $\lambda \in \sigma(\pi(T))$. Under the Gelfand identification, $\pi(T) \mapsto z$ and $\pi(T^\dagger) \mapsto z^{-1}$ on $\sigma(\pi(T))$. Consequently, $z$ and $z^{-1}$ generate the image of $\A_T / \mathscr{C}$ under $\Gamma$.   In this sense, $\A_T / \mathscr{C}$ consists of Laurent polynomials centered at zero.

A few comments are necessary at this point. First, the Gelfand map need not have closed range, and thus, $\Gamma(\A_T / \mathscr{C})$ may not be complete.  Moreover, $\Gamma$ may not even be injective in general.  If $\A$ is a commutative Banach algebra, and $a \in \A$ has $\sigma(a) = 0$, then $\Gamma(a) = 0$. However, since $\A_T / \mathscr{C}$ is generated by $\pi(T)$ and $\pi(T^\dagger) = \pi(T)^{-1}$, it follows that $z$ (and therefore $z^{-1}$) are non-zero. As $\Gamma$ is norm decreasing, we do have that every function in the range of $\Gamma$ is a Laurent series in $z$ and $z^{-1}$.

It will be shown in Section 4.1 that when the Fredholm index of $T$ is $-1$, $\mathscr{C} = \KH$. In some cases, this furnishes a rather detailed analysis of the quotient. In particular, the case of essentially normal subnormal operators will be studied in Section 4.4.  However, presently we will concern ourselves with an algebraic characterization of the commutator ideal.  To do this, we will first get a description of the algebra generated by $T$ and $T^\dagger$ pre-closure. 

% Consider the following example:
%
%\begin{example}
%	
%	Suppose that $T$ were a left invertible, irreducible, essentially normal operator and $\pi:\BH \rightarrow \BH/\KH$.  Suppose further that $\mathscr{J} = \KH$. By Proposition \ref{ess spec}, $\pi(T)$ is invertible, with inverse $\pi(T^\dagger)$.  Since $\A_T \subset C^*(T)$, we must have that in the Calkin algebra $\pi(\A_T) \subset \pi(C^*(T))$.  Since $T$ is irreducible, and since $P = I-T T^\dagger$ is a compact operator inside $C^*(T)$, necessarily $C^*(T)$ must contain all the compacts. Since $T$ is essentially normal, we can apply the Gelfand map to get
%	\[
%	\pi(C^*(T)) = C^*(T) / \KH \cong C(\sigma_e(T)).
%	\]
%	Consequently, $\pi(\A_T) \cong \overline{\mbox{Alg}}(z, z^{-1})$ as functions on $\sigma_e(T)$.  That is, $\pi(\A_T)$ is the uniform sub-algebra of $C(\sigma_e(T))$ generate by Laurent series centered at zero.   We will return to this example in a later section. 
%\end{example}

We just analyzed how quotienting by the commutator ideal results in $T$ becoming invertible. As a consequence, ``Laurent polynomials'' in $z$ and $z^{-1}$ over $\sigma(\pi(T))$ are dense in the quotient.  Next, we observe that if $T \in \BH$ is left invertible, then it dilates to an invertible. This will allow us to succinctly describe $\mbox{\Alg}(T,T^\dagger)$.  

Let $P = I - T T^\dagger$. Then the operator $W \in \mathscr{B}(\HH \oplus \HH)$ given by 
\[
W = 
\overset{\HH \ \ \HH}{
\begin{pmatrix}
T^\dagger & 0\\
P & T
\end{pmatrix}
}
\]
is invertible, with inverse given by 
\[
W^{-1} = \begin{pmatrix}
T & P\\
0 & T^\dagger
\end{pmatrix}.
\]
Let $Q_1$ and $Q_2$ denote the projections onto $\HH_1:=\HH \oplus 0$ and $\HH_2:=0 \oplus \HH$ respectively. By construction $T = Q_2 W \mid_{\HH_2}$ and $T^\dagger = Q_2 W^{-1} \mid_{\HH_2}$. Furthermore, for each $n$,
\[
W^n = \begin{pmatrix}
{T^\dagger}^n & 0\\
D_n & T^n
\end{pmatrix}
\quad \quad \quad
W^{-n} = \begin{pmatrix}
T^n & D_n\\
0 & {T^\dagger}^n
\end{pmatrix}
\]
where $D_n:= \sum_{k=0}^{n-1} T^k P {T^\dagger}^{n-1-k}$.  Since $\mbox{dim}(\mathscr{E}) < \infty$ by assumption, $D_n$ is a finite rank operator for each $n$. Furthermore, for every $n$,  $T^n = Q_2 W^n \mid_{\HH_2}$ and ${T^\dagger}^n = Q_2 W^{-n} \mid_{\HH_2}$.  It therefore follows that $\mbox{\Alg}(T,T^\dagger) = Q_2 \mbox{\Alg}(W \mid_{\HH_2}, W^{-1} \mid_{\HH_2})$.  Now, a straightforward calculation reveals the following:

\begin{equation}
\label{parts}
\begin{array}{rcl}
\vspace{.1 in}
Q_2 W^{-n} Q_1 W^m \mid_{\HH_2} &=& 0\\
\vspace{.1 in}
Q_2 W^{-n} Q_2 W^m \mid_{\HH_2} &=& {T^\dagger}^n T^m\\
\vspace{.1 in}
Q_2 W^m Q_1 W^{-n} \mid_{\HH_2} &=& D_m D_n\\
Q_2 W^m Q_2 W^{-n} \mid_{\HH_2} &=& T^m {T^\dagger}^n.
\end{array}
\end{equation}
Since $\mbox{\Alg}(T,T^\dagger) = Q_2 \mbox{\Alg}(W \mid_{\HH_2}, W^{-1} \mid_{\HH_2})$,  the operators appearing in Equation (\ref{parts}) span $\mbox{\Alg}(T, T^\dagger)$.  Namely, using Equation (\ref{parts}) we have
\begin{equation}
\label{part1}
D_m D_n + T^m {T^\dagger}^n = Q_2 W^{m} W^{-n} \mid_{\HH_2} = Q_2 W^{m-n} \mid_{\HH_2} = \left\{
\begin{array}{cl}
T^{m-n} & \mbox{ if } m > n\\
{T^\dagger}^{m-n} & \mbox{ else.}
\end{array}
\right. 
\end{equation}
Also,
\begin{equation}
\label{part2}
{T^\dagger}^n T^m = Q_2 W^{-n} W^m \mid_{\HH_2} = \left\{
\begin{array}{cl}
T^{m-n} & \mbox{ if } m > n\\
{T^\dagger}^{n-m} & \mbox{ else.}
\end{array}
\right. 
\end{equation}
Thus,  $T^m {T^\dagger}^n$ is equal to some power of a generator, up to the finite rank perturbation $D_m D_n$. 
Consequently, every operator $A$ in $\mbox{\Alg}(T, T^\dagger)$ may be written in the form 
\[
F + \sum_{k=0}^N a_k T^k + \sum_{l=1}^M b_l {T^\dagger}^l,
\]
where $F$ is some finite rank operator.  Hence, the dense subalgebra $\mbox{\Alg}(T, T^\dagger)$ are finite rank operators plus Laurent polynomials in $T$ and $T^\dagger$. We record this result here for future reference:

\begin{prop}
	\label{alg simple}
	Let $T$ be a natural left invertible operator.  If $A \in \mbox{\Alg}(T, T^\dagger)$ (pre-closure of $\A_T$), is the operator 
	\[
	A = \sum_{n,m=0}^N \alpha_{n,m} T^m {T^\dagger}^n
	\]
	then $A$ may be rewritten as
	\[
	A = F + \sum_{N \geq m \geq n \geq 0} \alpha_{n,m} T^{m-n} + \sum_{N \geq n \geq m \geq 1} \alpha_{n,m} {T^\dagger}^{n-m}
	\]
	where $F$ is the finite rank operator given by $F = -\sum_{n,m=0}^N \alpha_{n,m} D_m D_n$, and  $D_n= \sum_{k=0}^{n-1} T^k P {T^\dagger}^{n-1-k}$.
\end{prop}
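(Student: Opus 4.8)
The plan is to follow the computation that was already sketched in the paragraphs immediately preceding the statement, but to organize it cleanly around the single identity $T^m{T^\dagger}^n = D_mD_n + Q_2W^mW^{-n}\!\mid_{\HH_2}$ coming from Equation~(\ref{parts}). Concretely, I would first recall that from the third and fourth lines of Equation~(\ref{parts}) we have $Q_2W^mQ_1W^{-n}\!\mid_{\HH_2} = D_mD_n$ and $Q_2W^mQ_2W^{-n}\!\mid_{\HH_2} = T^m{T^\dagger}^n$, and that $Q_1 + Q_2 = I$ on $\HH\oplus\HH$. Adding the two lines gives
\[
D_mD_n + T^m{T^\dagger}^n = Q_2 W^m(Q_1+Q_2)W^{-n}\!\mid_{\HH_2} = Q_2W^mW^{-n}\!\mid_{\HH_2} = Q_2 W^{m-n}\!\mid_{\HH_2},
\]
which is exactly Equation~(\ref{part1}); since $Q_2W^{m-n}\!\mid_{\HH_2} = T^{m-n}$ when $m\ge n$ and $= {T^\dagger}^{n-m}$ when $m<n$, this yields the key per-monomial reduction
\[
T^m{T^\dagger}^n = -D_mD_n + \begin{cases} T^{m-n} & m\ge n\\ {T^\dagger}^{n-m} & m<n.\end{cases}
\]

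Next I would simply substitute this identity into $A = \sum_{n,m=0}^N \alpha_{n,m}T^m{T^\dagger}^n$ and collect terms by linearity. The $-D_mD_n$ contributions assemble into $F := -\sum_{n,m=0}^N \alpha_{n,m}D_mD_n$, which is finite rank because each $D_n = \sum_{k=0}^{n-1}T^kP{T^\dagger}^{n-1-k}$ is finite rank (as $P = I - TT^\dagger$ has rank $\dim(\mathscr{E}) = n < \infty$, being the projection onto $\mathscr{E}$), and a finite sum of finite rank operators is finite rank. The remaining contributions split into the indices with $m\ge n$, contributing $\sum_{N\ge m\ge n\ge 0}\alpha_{n,m}T^{m-n}$, and those with $n > m$, contributing $\sum_{N\ge n > m\ge 0}\alpha_{n,m}{T^\dagger}^{n-m}$; noting that in the latter sum $n>m\ge 0$ forces $m\ge 0$ and the exponent $n-m\ge 1$, one can equivalently write the range as $N\ge n\ge m\ge 1$ after re-indexing — or, more carefully, observe that the $m=0$, $n\ge 1$ terms are ${T^\dagger}^n$ and are already accounted for, so the stated form matches once one checks that the two index sets $\{N\ge n > m \ge 0\}$ and $\{N \ge n \ge m \ge 1\}$ produce the same collection of powers ${T^\dagger}^{n-m}$ with matching coefficients. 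I would insert a sentence reconciling this indexing so the statement as written is literally obtained.

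There is essentially no deep obstacle here: the only thing requiring care is the bookkeeping of the index ranges in the two Laurent sums and making sure the diagonal terms $m=n$ (which give $T^0 = I$) land in the first sum rather than being double-counted, and that the boundary cases ($n=0$ or $m=0$) are handled consistently with the claimed ranges. I would therefore present the proof as: (1) derive the per-monomial identity from Equation~(\ref{parts}); (2) sum over $n,m$ and separate the $D_mD_n$ part as $F$; (3) verify $F$ is finite rank via the rank of $P$; (4) rewrite the surviving sum, carefully matching the index sets to those in the statement. The main (minor) obstacle is step (4) — the re-indexing of $\sum_{n>m}\alpha_{n,m}{T^\dagger}^{n-m}$ into the form $\sum_{N\ge n\ge m\ge 1}\alpha_{n,m}{T^\dagger}^{n-m}$ — which I would handle by noting that terms with $m=0$ are subsumed and that strict inequality $n>m$ is equivalent to $n-m\ge 1$, so no coefficient is lost or duplicated.
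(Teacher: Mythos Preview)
Your approach is exactly the paper's own: the proposition is stated as a summary of the computation immediately preceding it, which derives Equation~(\ref{part1}) from Equation~(\ref{parts}) via $Q_1+Q_2=I$ and then sums the resulting per-monomial identity linearly over $n,m$. Your hesitation in step~(4) is warranted --- the index sets $\{n>m\ge 0\}$ and $\{n\ge m\ge 1\}$ are not literally equal (e.g.\ $(n,m)=(1,0)$ lies in the first but not the second, and $(1,1)$ in the second but not the first), so the second sum in the statement is slightly mis-indexed --- but the paper does not address this either, and your argument correctly establishes the intended decomposition with the second sum ranging over $n>m\ge 0$.
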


We conclude this subsection with a final comment on the commutator ideal $\mathscr{C}$ of $\A_T$. Recall that $P = I - T T^\dagger \in \mathscr{C}$.  Hence by the preceding, all the finite rank operators $F$ from this construction are in the commutator ideal $\mathscr{C}$. Combined with Proposition \ref{alg simple}, this observation allows us to algebraically characterize a dense subset of $\mathscr{C}$.

\begin{prop}
	\label{commutator}
	Let  $P = I - T T^\dagger$ and set 
	\[
	\mathscr{K}_T:= \overline{\mbox{\Span}} \{T^n P {T^\dagger}^m: n,m \geq 0\}.
	\]
	Then $\mathscr{K}_T = \mathscr{C}$. 
\end{prop}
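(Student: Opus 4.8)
The plan is to prove the two inclusions $\mathscr{K}_T \subset \mathscr{C}$ and $\mathscr{C} \subset \mathscr{K}_T$ separately. For the inclusion $\mathscr{K}_T \subset \mathscr{C}$, I would argue that each generator $T^n P {T^\dagger}^m$ lies in $\mathscr{C}$. Since $\mathscr{C}$ is a (closed, two-sided) ideal of $\A_T$ and $P = I - TT^\dagger \in \mathscr{C}$ by the trivial lemma recorded above, we have $T^n P {T^\dagger}^m \in \A_T \cdot P \cdot \A_T \subset \mathscr{C}$ for every $n, m \geq 0$. Taking the closed span gives $\mathscr{K}_T \subset \mathscr{C}$ (using that $\mathscr{C}$ is closed).

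For the reverse inclusion, the cleanest route is to first show that $\mathscr{K}_T$ is itself a closed two-sided ideal of $\A_T$, and then that the quotient $\A_T / \mathscr{K}_T$ is commutative; by minimality of the commutator ideal among ideals with commutative quotient, this forces $\mathscr{C} \subset \mathscr{K}_T$. To see that $\mathscr{K}_T$ is an ideal, it suffices to check that $T \cdot (T^n P {T^\dagger}^m)$, $T^\dagger \cdot (T^n P {T^\dagger}^m)$, $(T^n P {T^\dagger}^m) \cdot T$, and $(T^n P {T^\dagger}^m) \cdot T^\dagger$ all lie in $\Span\{T^k P {T^\dagger}^l\}$, since $\A_T = \overline{\Alg}(T, T^\dagger)$ and $\mathscr{K}_T$ is closed. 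Left multiplication by $T$ is immediate: $T \cdot T^n P {T^\dagger}^m = T^{n+1} P {T^\dagger}^m$. Right multiplication by $T^\dagger$ is equally immediate. For left multiplication by $T^\dagger$ when $n = 0$ we need $T^\dagger P$; but $T^\dagger P = T^\dagger(I - TT^\dagger) = T^\dagger - T^\dagger = 0$, so this case is fine (and for $n \geq 1$, $T^\dagger T^n P {T^\dagger}^m = T^{n-1} P {T^\dagger}^m$). For right multiplication by $T$ when $m = 0$ we need $T^n P T$; since $P T = (I - TT^\dagger)T = T - T = 0$, this vanishes too (and for $m \geq 1$, $T^n P {T^\dagger}^m T = T^n P {T^\dagger}^{m-1}$). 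Hence $\mathscr{K}_T$ is a closed two-sided ideal. To see the quotient is commutative, it is enough to check that the images of the generators $T$ and $T^\dagger$ commute modulo $\mathscr{K}_T$: indeed $T^\dagger T - T T^\dagger = I - (I - T^\dagger T) - TT^\dagger$; since $T$ is left invertible, $T^\dagger T = I$, so $T^\dagger T - T T^\dagger = I - TT^\dagger = P = T^0 P {T^\dagger}^0 \in \mathscr{K}_T$. Therefore $\A_T/\mathscr{K}_T$ is generated by the commuting images of $T, T^\dagger$, hence commutative, and $\mathscr{C} \subset \mathscr{K}_T$.

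An alternative, more hands-on route to $\mathscr{C} \subset \mathscr{K}_T$ would be to use Proposition \ref{alg simple} directly: any element of $\Alg(T, T^\dagger)$ can be written as $F + (\text{Laurent polynomial in } T, T^\dagger)$ with $F = -\sum \alpha_{n,m} D_m D_n$ and $D_n = \sum_{k=0}^{n-1} T^k P {T^\dagger}^{n-1-k}$; an element of (a dense subset of) $\mathscr{C}$ is one whose image in the commutative quotient $\A_T/\mathscr{C}$ vanishes, which by the Laurent-polynomial description of that quotient forces the Laurent part to cancel, leaving $F$, which manifestly lies in $\Span\{T^k P {T^\dagger}^l\}$ since each $D_m D_n$ does. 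Passing to closures completes the argument. I would probably present the ideal-plus-commutative-quotient argument as the main proof since it avoids bookkeeping, and mention the second as a remark.

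The only genuine subtlety — the step I'd watch most carefully — is the passage from generators to the closed ideal: one must make sure that showing $\mathscr{K}_T$ absorbs multiplication by the generators $T$ and $T^\dagger$ really does make it an ideal of the norm closure $\A_T$, not merely of $\Alg(T,T^\dagger)$. This is routine (approximate a general $A \in \A_T$ by polynomials in $T, T^\dagger$, multiply, and use that $\mathscr{K}_T$ is norm-closed and norm-multiplication is continuous), but it is the point where the closure hypotheses are actually used, so it deserves an explicit sentence.
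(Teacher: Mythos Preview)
Your proof is correct and follows essentially the same strategy as the paper: establish that $\mathscr{K}_T$ is a closed two-sided ideal of $\A_T$ with commutative quotient (forcing $\mathscr{C} \subset \mathscr{K}_T$), and observe that $\mathscr{K}_T$ is the principal ideal generated by $P \in \mathscr{C}$ (forcing $\mathscr{K}_T \subset \mathscr{C}$). Your verification that $\mathscr{K}_T$ is an ideal---checking absorption under multiplication by the generators $T$ and $T^\dagger$ via the identities $T^\dagger P = 0$ and $PT = 0$---is in fact cleaner than the paper's version, which invokes the decomposition of Proposition~\ref{alg simple} to handle multiplication by arbitrary elements of $\mathrm{Alg}(T,T^\dagger)$.
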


\begin{proof}
	First we show that $\mathscr{K}_T$ is an ideal of $\A_T$.  If $A \in \mbox{\Alg}(T, T^\dagger)$, then by Proposition \ref{alg simple}, 
	\[
	A = -\sum_{n,m=0}^N\alpha_{n,m} D_m D_n + \sum_{N \geq m \geq n \geq 0} \alpha_{n,m} T^{m-n} + \sum_{N \geq n \geq m \geq 1} \alpha_{n,m} {T^\dagger}^{n-m}.
	\]
	Now consider the product $A (T^k P {T^\dagger}^l)$ for some $k,l$.  Using Equations (\ref{part1}) and (\ref{part2}), it follows that $ T^k P {T^\dagger}^l$ multiplied by any part  in the decomposition of $A$ above is once again in $\mbox{\Span} \{T^n P {T^\dagger}^m: n,m \geq 0\}$. Similarly,  $ (T^k P {T^\dagger}^l)A \in \mbox{\Span} \{T^n P {T^\dagger}^m: n,m \geq 0\}$.  It follows that all polynomials from $\mbox{\Span} \{T^n P {T^\dagger}^m: n,m \geq 0\}$ multiplied by $A$ belong to $\mbox{\Span} \{T^n P {T^\dagger}^m: n,m \geq 0\}$. If $B \in \mathscr{K}_T$, it follows from taking limits and using the closure of $\mathscr{K}_T$ that $AB, BA \in \mathscr{K}_T$.  By density of $\mbox{\Alg}(T, T^\dagger)$ in $\A_T$, we have that $\mathscr{K}_T$ is an ideal for $\A_T$. 
	
	By definition, $P \in \mathscr{K}_T$ and so, $\A_T / \mathscr{K}_T$ is commutative.  Hence, $\mathscr{C} \subseteq \mathscr{K}_T$. However, notice that $\mathscr{K}_T$ is the principal ideal generated by $P$.  Indeed, if $\mathscr{J}$ is an ideal of $\A_T$, and $P \in \mathscr{J}$, then at a minimum each $T^n P {T^\dagger}^m$ must be inside of $\mathscr{J}$. Hence, $\mathscr{K}_T = \mathscr{C}$.
\end{proof}

%\rmkdd{Remove?} Ideally, we would like a canonical representation of $T$ as multiplication by $z$ on some reproducing kernel Hilbert space. If we further have $T^\dagger$ represented as multiplication by $z^{-1}$, then $\A_T$ could be further described as compact perturbations of multiplication operators with symbols Laurent series. This turns out to be the case for special class of operators, which we call analytic.  We will expand on this particular topic in our discussion of Cowen-Douglas operators. 

%%%%%%%%%%%%%%%%%%%%%%%%%%%%%%%%%%%%%%%%
%%%%%%%%%%%%%%%%%%%%%%%%%%%%%%%%%%%%%%%%
%%%%%%%%%%%%%%%%%%%%%%%%%%%%%%%%%%%%%%%%
%%%%%%%%%%%%%%%%%%%%%%%%%%%%%%%%%%%%%%%%
\subsection{Wold-Type Decompositions}
%%%%%%%%%%%%%%%%%%%%%%%%%%%%%%%%%%%%%%%%
%%%%%%%%%%%%%%%%%%%%%%%%%%%%%%%%%%%%%%%%
%%%%%%%%%%%%%%%%%%%%%%%%%%%%%%%%%%%%%%%%
%%%%%%%%%%%%%%%%%%%%%%%%%%%%%%%%%%%%%%%%

  One reason isometries are a  tractable class of operators is due to the celebrated Wold decomposition.  For future notational considerations, we state the Wold Decomposition here:

\begin{thm}[Wold Decomposition for Isometries]
	Let $S$ be an isometry on $\HH$.  Define
	\[
	\begin{array}{rcl}
	\HH_I &:=& \bigcap_{n \geq 1} S^n \HH\\
	\HH_A &:=&  \bigvee_{n \geq 0} S^n \mathscr{E}.
	\end{array}
	\]
	Then $\HH_I$ and $\HH_A$ are reducing for $S$, $\HH = \HH_I \oplus \HH_A$,  $S \mid_{\HH_I}$ is a unitary and $S \mid_{\HH_A}$ is a unilateral shift of rank $n$.
\end{thm}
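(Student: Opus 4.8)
The plan is to verify the four assertions in the order: (1) $\HH_I$ is closed and $S$-invariant and $\HH_A$ is $S$-invariant; (2) $S\mid_{\HH_I}$ is unitary; (3) $\HH = \HH_I \oplus \HH_A$ with both summands reducing $S$; (4) $S\mid_{\HH_A}$ is a unilateral shift of multiplicity $\dim\mathscr{E}$. The one structural input that drives everything is the telescoping identity behind Proposition \ref{ker dagger n}: since an isometry satisfies $S^\dagger = S^*$ and $S^*S = I$, setting $P = I - SS^*$ gives $I - S^n(S^*)^n = \sum_{k=0}^{n-1} S^k P (S^*)^k$, and because $S$ is isometric each $S^k P (S^*)^k$ is the orthogonal projection onto $S^k\mathscr{E}$, these subspaces being mutually orthogonal (a sum of orthogonal projections is a projection only if the projections are pairwise orthogonal). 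Hence $\HH \ominus \ran(S^n) = \bigoplus_{k=0}^{n-1} S^k\mathscr{E}$ for every $n$.

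First I would treat $\HH_I$. Since an isometry is bounded below, each $\ran(S^n)$ is closed, and $\ran(S^{n+1}) \subseteq \ran(S^n)$, so $\HH_I$ is a closed subspace and $S\HH_I \subseteq \bigcap_{n\geq 1}\ran(S^n) = \HH_I$. For surjectivity of $S$ on $\HH_I$: given $x \in \HH_I$, for each $n$ write $x = S^{n+1}z_n$; injectivity of $S$ forces the elements $S^{n}z_n$ to coincide with a single vector $y$ with $Sy = x$, and then $y \in \bigcap_n \ran(S^n) = \HH_I$, so $S\HH_I = \HH_I$. Thus $S\mid_{\HH_I}$ is a surjective isometry, i.e.\ a unitary on $\HH_I$; moreover for $x \in \HH_I$, writing $x = Sw$ with $w \in \HH_I$ gives $S^*x = S^*Sw = w \in \HH_I$, so $\HH_I$ reduces $S$.

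Next I would handle $\HH_A$ and the splitting. Invariance $S\HH_A \subseteq \HH_A$ is immediate from $S(S^k\mathscr{E}) = S^{k+1}\mathscr{E}$, and since $S^*(S^k\mathscr{E}) = S^{k-1}\mathscr{E}$ for $k \geq 1$ while $S^*\mathscr{E} = S^*\ker(S^*) = 0$, boundedness of $S^*$ yields $S^*\HH_A \subseteq \HH_A$; so $\HH_A$ reduces $S$. For the decomposition, the subspaces $\ran(S^n)^\perp = \bigoplus_{k=0}^{n-1}S^k\mathscr{E}$ increase with $n$, the closure of their union is $\bigvee_{k\geq 0}S^k\mathscr{E} = \HH_A$, and (each $\ran(S^n)$ being closed) that closure also equals $\left(\bigcap_n \ran(S^n)\right)^\perp = \HH_I^\perp$. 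Hence $\HH = \HH_I \oplus \HH_A$. Finally, the same identity gives $S^j\mathscr{E} \perp S^k\mathscr{E}$ for $j \neq k$, so $\HH_A = \bigoplus_{k\geq 0}S^k\mathscr{E}$ with $S$ carrying each $S^k\mathscr{E}$ isometrically onto $S^{k+1}\mathscr{E}$; this is by definition a unilateral shift of multiplicity $\dim\mathscr{E}$, which is the number $n$ in the statement.

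The only step that is not pure bookkeeping is the surjectivity $S\HH_I = \HH_I$: one must check that the preimages produced at each level $n$ are mutually consistent, so that a single $y \in \bigcap_n \ran(S^n)$ is obtained, and this is precisely where injectivity of the isometry is used. Everything else — the reducing property, the orthogonal splitting, and the shift structure — follows formally once the decomposition $\HH \ominus \ran(S^n) = \bigoplus_{k=0}^{n-1}S^k\mathscr{E}$ is available.
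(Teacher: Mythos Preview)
Your proof is correct and is one of the standard arguments for the Wold decomposition. However, the paper does not actually prove this theorem: it is stated in Section~2.3 as a classical background result (``For future notational considerations, we state the Wold Decomposition here''), with no proof supplied, and is used only to motivate the subsequent definitions of $\HH_I$ and $\HH_A$ for general left invertibles. So there is no paper proof to compare against; your argument stands on its own and in fact anticipates the telescoping identity the paper later records as Proposition~\ref{ker dagger n}.
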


All isometries decompose the Hilbert space into two orthogonal,  reducing subspaces for $S$.  On $\HH_I$, the isometry $S$ is invertible, and hence, a unitary.  On $\HH_A$, the isometry is purely isometric.  The isometric summand yields an analytic model.  Concretely, $S\mid_{\HH_A}$ is unitarily equivalent to $\mbox{dim}(\mathscr{E})$ orthogonal copies of $M_z$ on $H^2(\mathbb{T})$.  The unilateral shift is is unitarily equivalent to the operator of multiplication by $z$ on a reproducing kernel Hilbert space of analytic functions.

For a general left invertible operator $T \in \BH$, one would like to arrive at a similar type of decomposition.  We make the following definition:
\begin{defn}
	Given a left invertible $T \in \BH$, we  define:
	
	\[
	\begin{array}{rcl}
	\HH_I &:=& \bigcap_{n \geq 1} T^n \HH\\
	\HH_A &:=&  \bigvee_{n \geq 0} T^n \mathscr{E}.
	\end{array}
	\]
\end{defn}
As a caution to the reader, $\HH_I$ and $\HH_A$ need not be reducing.  However, $\HH_I$ and $\HH_A$ are clearly invariant subspaces for $T$.  Moreover, $\HH_I$ is invariant for $T^\dagger$ and  $T \mid_{\HH_I}$ is invertible, with inverse $T^\dagger \mid_{\HH_I}$.  We shall show that $T\mid_{\HH_A}$ acts like a shift, not on a orthonormal basis, but on a more general basis. This will be discussed below.

In general, there is no Wold-type decomposition for $T$ with regards to the spaces $\HH_I$ and $\HH_A$.  Namely, it is not the case that $\HH= \HH_I \oplus \HH_A$.  Their sum can fail to be orthogonal, and hence, $\HH_I + \HH_A$ may not be equal to $\HH$.  However, it is always the case that if $T \in \BH$ is left invertible, then $\HH_I + \HH_A$ is dense in $\HH$ with $\HH_I \cap \HH_A = 0$. See \cite{DeSantis} for details.

In \cite{Shimorin}, Shimorin observed that there is almost a Wold-type decomposition.  This decomposition is related to a canonical left invertible operator associated to $T$, called the Cauchy dual of $T$: 

\begin{defn}[\cite{Shimorin}]
	Given a left invertible operator $T$, the \textbf{Cauchy dual} of $T$, denoted $T'$, is the left invertible given by
	\[
	T':= T (T^* T)^{-1} = {T^\dagger}^*.
	\]
\end{defn}

\begin{prop}
	\label{Cauchy dual}
	Let $T$ be a left invertible operator, and  $T' $ its Cauchy dual. The following statements hold:
	
	\begin{enumerate}[i.]
		\item $T'$ is left invertible with Moore-Penrose inverse ${T'}^\dagger = T^*$
		\item $\mathscr{E}':= \ker((T')^*) = \ker(T^\dagger) = \ker(T^*) = \mathscr{E}$
		\item $\mbox{\ind}(T') = \mbox{\ind}(T)$
	\end{enumerate}
\end{prop}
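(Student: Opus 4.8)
\textbf{Proof plan for Proposition \ref{Cauchy dual}.}

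The plan is to verify the three claims directly from the definition $T' = T(T^*T)^{-1} = {T^\dagger}^*$, leaning on Proposition \ref{dagger_form} and the basic facts about left invertibles already assembled. For (i), I would first observe that $T'$ is left invertible because $T^*$ is a left inverse: indeed $T^* T' = T^* T (T^*T)^{-1} = I$. To identify the Moore-Penrose inverse, by Proposition \ref{MPI} it suffices to check the two characterizing properties of ${T'}^\dagger$, namely that its kernel equals $\ran(T')^\perp = \ker((T')^*)$ and that it acts as a genuine left inverse on $\ker(T')^\perp = \HH$ (since $T'$ is injective). The second property is the computation just given. For the first, I compute $(T')^* = ((T^*T)^{-1})^* T^* = (T^*T)^{-1} T^*$ using self-adjointness of $(T^*T)^{-1}$; this is exactly $T^\dagger$ by Proposition \ref{dagger_form}, so $\ker((T')^*) = \ker(T^\dagger) = \ker(T^*) = \mathscr{E}$ by Proposition \ref{T projection}. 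Since $T^*$ has kernel $\ker(T^*) = \mathscr{E} = \ker((T')^*)$, Proposition \ref{MPI} forces ${T'}^\dagger = T^*$.

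Claim (ii) then falls out of the computation already performed: $(T')^* = T^\dagger$, so $\mathscr{E}' = \ker((T')^*) = \ker(T^\dagger)$, and this equals $\ker(T^*) = \mathscr{E}$ by Proposition \ref{T projection}(iii). I would just record this as an immediate consequence rather than a separate argument.

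For (iii), since $T'$ is left invertible it is injective, so $\ind(T') = -\dim(\ker((T')^*)) = -\dim(\mathscr{E}')$; by (ii) this is $-\dim(\mathscr{E}) = \ind(T)$ using Proposition \ref{ess spec}. Alternatively, and perhaps more cleanly in the spirit of the text, one can invoke Proposition \ref{ess spec} applied to $T'$: since $T' = {T^\dagger}^*$, we have $\ind(T') = -\ind({T'}^\dagger) = -\ind(T^*) = \ind(T)$, where the last step uses $\ind(T^*) = -\ind(T)$ for Fredholm operators; this also confirms $T'$ is natural whenever $T$ is. I do not anticipate a genuine obstacle here — the only point requiring a little care is making sure the adjoint identity $(T')^* = (T^*T)^{-1}T^* = T^\dagger$ is stated before it is used, since it is the linchpin of all three parts; everything else is bookkeeping with the propositions already established.
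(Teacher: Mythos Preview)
Your proposal is correct and essentially matches the paper's argument: both hinge on the identity $(T')^* = T^\dagger$ and the observation that $T^*T' = I$. The only minor difference is that the paper identifies ${T'}^\dagger$ by applying the formula of Proposition~\ref{dagger_form} directly, computing ${T'}^\dagger = ({T'}^*T')^{-1}{T'}^* = (T^*T)T^\dagger = T^*$, whereas you instead verify that $T^*$ satisfies the two characterizing properties in Proposition~\ref{MPI}; these are interchangeable one-line checks.
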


\begin{proof}
	It is clear from the definition that $T'$ is left invertible with $T^*$ a left inverse.  That ${T'}^\dagger = T^*$ follows from a simple computation:
	\[
	{T'}^\dagger = ({T'}^* T')^{-1} {T'}^* = (T^\dagger T')^{-1} T^\dagger = (T^* T) T^\dagger = T^*.
	\]
	The remaining observations now follow. 
\end{proof}
For the Cauchy dual $T'$, we define the analogous invariant subspaces:
\[
\begin{array}{rcl}
\HH_I' &:=& \bigcap_{n \geq 1} {T'}^n \HH\\
\HH_A' &:=&  \bigvee_{n \geq 0} {T'}^n \mathscr{E}.
\end{array}
\]

While one cannot hope to arrive at a decomposition $\HH = \HH_I \oplus \HH_A$, there is a duality between the spaces $\HH_I, \HH_I'$ and $\HH_A, \HH_A'$.  

\begin{prop}[\cite{Shimorin}, Prop 2.7]
	\label{anal decomp}
	Let $T$ be a left invertible operator.  Then 
	\[
	\HH = \HH_I \oplus \HH_A' = \HH_I' \oplus \HH_A.
	\]
	where $\oplus$ is an orthogonal direct summand of closed subspaces. 
\end{prop}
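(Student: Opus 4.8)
The plan is to show the two orthogonal decompositions $\HH = \HH_I \oplus \HH_A'$ and $\HH = \HH_I' \oplus \HH_A$, and by symmetry (swapping $T$ and $T'$, using Proposition \ref{Cauchy dual} which gives $(T')' = T$, $\mathscr{E}' = \mathscr{E}$) it suffices to prove just one of them, say $\HH = \HH_I' \oplus \HH_A$. The natural first move is to understand the orthogonal complement of $\HH_A = \bigvee_{n \geq 0} T^n \mathscr{E}$. A vector $x$ lies in $\HH_A^\perp$ if and only if $\langle x, T^n e \rangle = 0$ for all $e \in \mathscr{E}$ and all $n \geq 0$, i.e. $(T^*)^n x \perp \mathscr{E}$ for all $n$, i.e. $(T^*)^n x \in \mathscr{E}^\perp = \ran(T)$ for all $n \geq 0$. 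Now I would observe that on $\ran(T)$ the operator $T^\dagger$ acts as an honest inverse, and more to the point $\mathscr{E}^\perp = \ran(T)$ is exactly $\ker(P)$ where $P = I - TT^\dagger$; so $(T^*)^n x \in \ran(T)$ for all $n$ should be translatable into a statement about iterated preimages under $T'$. The cleanest route is to use the identity $T' = {T^\dagger}^*$ and the relation $\ran(T) = \ran(T'(T^*T)) $; actually the key computational fact I would isolate is that $P_{\ran(T)} = TT^\dagger = TT^* (TT^*)$... more carefully, $(T')^n \HH$ has orthogonal complement $\bigvee_{k=0}^{n-1} (T^*)^{?}$— rather, I would apply Proposition \ref{ker dagger n} to $T'$: since $(T')^\dagger = T^*$, we get $\ker((T^*)^n) = \bigvee_{k=0}^{n-1}(T')^k \mathscr{E}$, and taking orthogonal complements, $\ran((T^*)^n)^{\perp\perp}$... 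I should instead directly characterize $\HH_I' = \bigcap_n (T')^n\HH$.

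Here is the cleaner line of attack. By Proposition \ref{T projection} applied to $T'$ (whose Moore-Penrose inverse is $T^*$, by Proposition \ref{Cauchy dual}), the projection onto $\ran(T')$ is $T'(T')^\dagger = T' T^*$, and $\ran(T') = \ran((T')^*)^{\perp\perp}$; but more usefully $\ran(T')^\perp = \ker((T')^*) = \mathscr{E}$. So $\HH_A = \bigvee_n T^n\mathscr{E}$ and I want to relate $T^n \mathscr{E}$ to the $T'$-picture. The identity I would drive everything through is: for any left invertible $T$, one has the orthogonal decomposition $\HH = \mathscr{E} \oplus \ran(T)$ and $\HH = \mathscr{E} \oplus \ran(T')$ (both hold since $\ker(T^*) = \ker((T')^*) = \mathscr{E}$). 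Then $\HH_A^\perp = \{x : (T^*)^n x \in \ran(T) \text{ for all } n \geq 0\}$. I claim this equals $\HH_I' = \bigcap_{n\geq 1}(T')^n \HH$. To see "$\supseteq$": if $x = (T')^n y_n$ for every $n$, then since $(T')^\dagger = T^*$ is a left inverse of $T'$, we have $(T^*)^n x = y_n \in \HH$, but we need $(T^*)^k x \in \ran(T)$; write $(T^*)^k x = (T^*)^k (T')^n y_n$ for $n > k$, and use that $T^*$ maps $\ran(T')$ into ... hmm, here is where I must be careful — the right statement is $(T^*)^k x = (T^*)^k(T')(T')^{n-1}y_n = (T')^{?}$; actually $T^* T' = T^*{T^\dagger}^* = (T^\dagger T)^* = I$, so $(T^*)^k (T')^n y_n = (T')^{n-k}y_n \in \ran(T')$ for $n > k$, hence in $\ran(T') = \mathscr{E}^\perp$. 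Wait, I want $\ran(T)$, not $\ran(T')$ — but actually for the complement of $\HH_A$ I only need perpendicularity to $\mathscr{E}$, which is $\ran(T')^\perp$ as well, so being in $\ran(T')$ suffices. Good, so that direction works. For "$\subseteq$": if $(T^*)^n x \perp \mathscr{E}$ for all $n$, then $(T^*)^n x \in \ran(T')$ for all $n \geq 0$ (using $\ran(T')^\perp = \mathscr{E}$), so $(T^*)^{n-1}x = T'(T^*)^n x \cdot$(?)... more precisely, since $(T^*)^n x \in \ran(T')$, write $(T^*)^n x = T' w$; applying that $T^* T' = I$ gives $w = T^*(T^*)^n x = (T^*)^{n+1}x$, so $(T^*)^n x = T'(T^*)^{n+1}x$. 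Iterating, $x = (T')^n (T^*)^n x \in (T')^n\HH$ for all $n$, so $x \in \HH_I'$. This establishes $\HH_A^\perp = \HH_I'$, hence $\HH = \HH_I' \oplus \HH_A$ as an orthogonal direct sum of closed subspaces ($\HH_I'$ is closed as an intersection of closed subspaces — each $(T')^n\HH$ is closed since $T'$ is left invertible hence bounded below, by Proposition \ref{equivalent_left}; and $\HH_A$ is closed as a closed span).

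The second decomposition $\HH = \HH_I \oplus \HH_A'$ then follows by applying what we just proved with $T$ replaced by $T'$: its Cauchy dual is $(T')' = (T')((T')^*T')^{-1}$, and a short computation (or citing that $(T')^\dagger = T^*$ and $((T')^\dagger)^* = T$) shows $(T')' = T$, while $\mathscr{E}' = \mathscr{E}$ by Proposition \ref{Cauchy dual}. Thus the roles of $\HH_I, \HH_A$ and $\HH_I', \HH_A'$ swap, giving $\HH = (\HH_I')' \oplus (\HH_A')' $, i.e. $\HH = \HH_I \oplus \HH_A'$.

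The main obstacle I anticipate is purely bookkeeping: keeping straight the several algebraic identities $T^* T' = I$, $(T')^\dagger = T^*$, $(T')' = T$, $\mathscr{E}' = \mathscr{E}$, $\ran(T')^\perp = \mathscr{E} = \ran(T)^\perp$, and using the correct one at each step — in particular noticing that orthogonality to $\mathscr{E}$ can be read either as membership in $\ran(T)$ or in $\ran(T')$, and that it is the latter that meshes with the $T'$-dynamics. There is no hard analysis here; the closedness of all subspaces involved is immediate from left invertibility (Proposition \ref{equivalent_left}) and the fact that closed spans are closed. So the proof is essentially the two-line complement computation above, done symmetrically, and the write-up should emphasize the identity $T^* T' = I$ together with the characterization $\HH_A^\perp = \{x : (T^*)^n x \perp \mathscr{E}, \ n \geq 0\}$ as the engine.
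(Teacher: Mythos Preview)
Your argument is correct. The paper does not supply its own proof of this proposition; it simply cites Shimorin. Your approach---showing $\HH_A^\perp = \HH_I'$ via the characterization $x \in \HH_A^\perp \iff (T^*)^n x \perp \mathscr{E}$ for all $n \geq 0$, combined with the identities $T^* T' = I$, $\ran(T') = \mathscr{E}^\perp = \ran(T)$, and then invoking the duality $(T')' = T$, $\mathscr{E}' = \mathscr{E}$ for the second decomposition---is exactly the standard argument and is essentially what Shimorin does. The only cosmetic point is that the write-up reads as exploratory; in a clean version you can state $\ran(T) = \ran(T') = \mathscr{E}^\perp$ once at the outset (both are closed with orthogonal complement $\mathscr{E}$) and then the two inclusions for $\HH_A^\perp = \HH_I'$ are each two lines.
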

This duality is key in analyzing $\A_T$. We will leverage information between $T$ and $T'$ (or $T^\dagger$ and $T^*$) in order to prove theorems about $\A_T$.  

For some isometries, the Wold-decomposition is trivial.  For example, the unilateral shift on $\ell^2(\mathbb{N})$ is purely isometric since the subspace $\HH_I = 0$. This leads us to the following definition:
\begin{defn}[\cite{Shimorin}]
	An operator $T \in \BH$ is  \textbf{analytic} if $\HH_I = 0$.
\end{defn}
We will show in Section 3 that when $T$ is analytic, $T$ is unitarily equivalent to $M_z$ on a reproducing kernel Hilbert space of analytic functions.

%%%%%%%%%%%%%%%%%%%%%%%%%%%%%%%%%%%%%%%%
%%%%%%%%%%%%%%%%%%%%%%%%%%%%%%%%%%%%%%%%
%%%%%%%%%%%%%%%%%%%%%%%%%%%%%%%%%%%%%%%%
%%%%%%%%%%%%%%%%%%%%%%%%%%%%%%%%%%%%%%%%
\subsection{Basis and Dual Basis}
%%%%%%%%%%%%%%%%%%%%%%%%%%%%%%%%%%%%%%%%
%%%%%%%%%%%%%%%%%%%%%%%%%%%%%%%%%%%%%%%%
%%%%%%%%%%%%%%%%%%%%%%%%%%%%%%%%%%%%%%%%
%%%%%%%%%%%%%%%%%%%%%%%%%%%%%%%%%%%%%%%%

We now explore how $T \mid_{\HH_A}$ acts as a shift on a general basis. This will be done by showing if $T$ is a natural analytic left invertible, then it endows the Hilbert space with a type of basis analogous to that of a (Hamel) basis for a vector space, called a Schauder basis.

\begin{defn}
	A Banach space $X$ is said to have a \textbf{Schauder basis} if there exists a sequence $\{x_n\}$ of $X$ such that for every element $x \in X$, there is a unique sequence of scalars $\alpha_n$ such that 
	\[
	x = \sum_{n \geq 0} \alpha_n x_n
	\]
	where the above sum is converging in the norm topology of $X$. Alternatively, $\{x_n\}$ is a Schauder basis if and only if 
	
	\begin{enumerate}[i.]
		\item $\overline{\mbox{\Span}}\{x_n\} = X$
		\item $\sum a_n x_n = 0$ if and only if $a_n = 0$ for all $n$.
	\end{enumerate}

\end{defn}

Recall that a subspace $\mathscr{E}$ is said to be a \textit{wandering subspace} for an operator $T \in \BH$ if for each $n \in \mathbb{N}$, $\mathscr{E} \perp T^n \mathscr{E}$ \cite{Halmos}.  In the case of isometric operators, one further has $T^n \mathscr{E} \perp T^m \mathscr{E}$ for each $n, m \in \mathbb{N}$ with $n \neq m$.

Let $T$ be a natural analytic left invertible operator, and $L$ be a left inverse of $T$.  The next result shows that $\mathscr{E} = \ker(T^*)$ is a wandering subspace for $T$ and $L^*$. However, $T^n \mathscr{E}$ may not be orthogonal to $T^m \mathscr{E}$ for $n \neq m$.  The invariant subspace generated in this fashion is the whole Hilbert space. Thus, the orbit of $T$ and $L^*$ on $\ker(T^*)$ give rise to a Schauder basis:

\begin{thm}
	\label{sbasis}
	Let $T$ be a natural analytic left invertible operator with $\mbox{\ind}(T) = -n$ for some positive integer $n$. Let $\{x_{i,0}\}_{i=1}^{n}$ be an orthonormal basis for $\ker(T^*)$, and $L$ be a left inverse of $T$. Then
	
	\begin{enumerate}[i.]
		\item $x_{i,j}:= T^j x_{i,0}$, $i = 1, \dots n$, $j = 0, 1, \dots$ is a Schauder basis for $\HH$
		\item $x_{i,j}':=(L^*)^j x_{i,0}$, $i = 1, \dots n$, $j = 0, 1, \dots$  is a Schauder basis for $\HH$.
	\end{enumerate}

\end{thm}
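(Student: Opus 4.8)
The plan is to establish (i) directly and then obtain (ii) by applying (i) to the Cauchy dual $T'$, whose Moore–Penrose inverse is $T^*$ and which shares the index and the defect space $\mathscr{E}$ with $T$ (Proposition \ref{Cauchy dual}). So the real work is all in (i). To show that $\{x_{i,j}\}$ is a Schauder basis I would verify the two conditions in the alternative characterization: completeness of the span, and the uniqueness/independence condition $\sum a_{i,j} x_{i,j} = 0 \Rightarrow a_{i,j} = 0$.

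For completeness, note $\overline{\Span}\{x_{i,j} : i, j\} = \bigvee_{j \geq 0} T^j \mathscr{E} = \HH_A$, and since $T$ is analytic, $\HH_I = 0$; by the Shimorin decomposition $\HH = \HH_I' \oplus \HH_A$ (Proposition \ref{anal decomp}) it is not immediately $\HH_A = \HH$, so I need to argue $\HH_I' = 0$ as well — equivalently, that analyticity of $T$ forces analyticity of $T'$ and hence $\HH_A = \HH$. (This is presumably Theorem A, items (i)$\Leftrightarrow$(ii), which I may cite; alternatively one shows directly that if $T$ is analytic then $\bigvee T^j\mathscr{E}$ exhausts $\HH$, using that $\ker((T^\dagger)^k) = \bigvee_{j=0}^{k-1} T^j \mathscr{E}$ from Proposition \ref{ker dagger n} together with $\bigcap_k T^k \HH = 0$.) Granting $\HH_A = \HH$ gives condition (i) of the Schauder-basis definition. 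For the uniqueness condition and, more importantly, for the actual meaning of "basis" (that every $x$ has a \emph{convergent} expansion), I would proceed as follows: given the left inverse $L$ with $LT = I$, define the coefficient functionals by applying powers of $L$. Precisely, if $x = \sum_{i,j} \alpha_{i,j} x_{i,j} = \sum_j T^j\big(\sum_i \alpha_{i,j} x_{i,0}\big)$, then writing $w_j := \sum_i \alpha_{i,j} x_{i,0} \in \mathscr{E}$, one recovers $w_0 = (I - TT^\dagger) x = Px$ after noting $L^k$ kills the lower-order terms appropriately; more carefully, $T^\dagger x$ strips one power of $T$ off the tail (since $T^\dagger T = I$ and $T^\dagger\mathscr{E} = 0$), so $w_j = P (T^\dagger)^j x$. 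This simultaneously proves uniqueness of the coefficients and identifies them explicitly.

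The substantive point — the one I expect to be the main obstacle — is proving \emph{norm convergence} of $\sum_{j} T^j (P(T^\dagger)^j x)$ back to $x$, i.e.\ that the partial sums $S_N x := \sum_{j=0}^{N} T^j P (T^\dagger)^j x$ converge to $x$. By Proposition \ref{ker dagger n}, $S_N = I - T^{N+1}(T^\dagger)^{N+1}$, so $S_N x \to x$ iff $T^{N+1}(T^\dagger)^{N+1} x \to 0$ for every $x$. Equivalently, $TT^\dagger$-type reasoning: $T^{N}(T^\dagger)^{N}$ is the projection (not orthogonal, but idempotent) onto $T^N\HH$ along $\ker((T^\dagger)^N)$, and one needs its strong convergence to $0$. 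This is exactly where analyticity $\bigcap_N T^N\HH = 0$ must be leveraged, but $\bigcap_N \ran(T^{N}(T^\dagger)^{N}) = \HH_I = 0$ alone does not give strong convergence of idempotents without a uniform bound; so I would need a uniform bound $\sup_N \|T^N (T^\dagger)^N\| < \infty$ (equivalently $\sup_N \|S_N\| < \infty$), after which strong convergence on the dense set $\Span\{x_{i,j}\}$ — where it is obvious since $S_N$ eventually fixes each $x_{i,j}$ — extends to all of $\HH$. Establishing that uniform bound is the crux: I would get it from the analytic model (Theorem A: $T \sim M_z$ on a reproducing-kernel Hilbert space of analytic functions on a disc $\{|z| < \epsilon\}$, where $T^N(T^\dagger)^N$ corresponds to a concrete Toeplitz-like truncation whose norms are controlled), or, if one wants to avoid invoking Theorem A, from a direct Shimorin-style estimate using $\HH = \HH_I' \oplus \HH_A$ and boundedness of the associated skew projections. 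Part (ii) then follows verbatim by replacing $T$ with $T'$, using ${T'}^\dagger = T^*$ so that $(L')^* $ plays the role of $T$ and $T$ itself ($= {T'}{}'$) plays the role of $L^*$ in the statement — one checks the roles line up with the claimed formula $x_{i,j}' = (L^*)^j x_{i,0}$.
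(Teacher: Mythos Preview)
Your proposal has a circularity problem that the paper's proof is specifically designed to avoid. You want to establish (i) first by showing $\HH_A=\HH$, and you correctly observe that Shimorin's decomposition $\HH=\HH_I'\oplus\HH_A$ reduces this to $\HH_I'=0$, i.e.\ to analyticity of $T'$. But in the paper's logical order, $T$ analytic $\Rightarrow T'$ analytic (Corollary~\ref{T T' anal}, equivalently (i)$\Leftrightarrow$(ii) of Theorem~A) is a \emph{consequence} of Theorem~\ref{sbasis}, so you cannot cite it here. Your fallback, via Proposition~\ref{ker dagger n}, runs into the same wall: $\bigvee_j T^j\mathscr{E}=\bigvee_k\ker((T^\dagger)^k)$ has orthocomplement $\bigcap_k(T')^k\HH=\HH_I'$, so again you need $T'$ analytic. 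Likewise, your plan to derive (ii) by ``applying (i) to $T'$'' presupposes that $T'$ is analytic.

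The paper resolves this by reversing your order. From $T$ analytic one gets directly, via Proposition~\ref{anal decomp}, that $\HH=\HH_A'$ (not $\HH_A$). So the paper first proves that the \emph{dual} system $x_j'=(T')^j x_0$ is a Schauder basis: completeness is immediate from $\HH=\HH_A'$, and uniqueness follows by applying $(I-TT^\dagger)(T^*)^m$. Only then does it bootstrap: using the already-established basis $\{x_j'\}$ and biorthogonality to show that $\{x_j=T^jx_0\}$ has dense span, and finally repeating the bootstrap with $\{x_j\}$ in place of $\{x_j'\}$ to handle a general left inverse $L$.

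On your convergence worry: the paper's Definition of Schauder basis explicitly includes the ``alternative'' characterization (dense span plus uniqueness of the zero expansion), and the proof uses exactly that. So within the paper's framework there is no need to prove $T^{N}(T^\dagger)^{N}\to 0$ strongly or to bound $\sup_N\|T^N(T^\dagger)^N\|$. Your instinct that this is the hard analytic point is sound if one insists on the standard definition; but note that your proposed routes to the uniform bound (the analytic model, Theorem~A) are again downstream of Theorem~\ref{sbasis} and hence circular here.
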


\begin{proof}
	We will only prove the case when $\mbox{\ind}(T) = -1$.  The general case is no more complicated, but simply requires extra notation for bookkeeping. In this case, $\ker(T^*) = \mbox{\Span}\{x_0\}$ for some norm one element $x_0 \in \HH$. 
	
	The proof will proceed as follows.  First we will show that the wandering space for $T':={T^\dagger}^*$ produces a Schauder basis.  Then we show that the orbit of $x_0$ under powers of $T$ will produce a Schauder basis, which will allow us to conclude that for any left inverse $L$, the orbit of $L^*$ yields a Schauder basis.

	Since $T$ is analytic, by Proposition \ref{anal decomp}, we have that 
	\begin{equation}
	\label{hspan}
	\HH = \HH_A' = \bigvee_{j \geq 0} {T'}^j \ker(T^*).
	\end{equation}
	Let $x_j':= {T'}^j x_0$ for $j = 0, 1, \dots$.  Then by construction, $T' x_j' = x_{j+1}'$ and
	\[
	{T^*}^{m} x_{j}' = \left\{ \begin{array}{ll} 0 & \mbox{ if } m > j\\ x_{j-m}' & \mbox{ if } m \leq j \end{array} \right.
	\]
	
	Notice that \{$x_j'\}$ is a Schauder basis.  Indeed by (\ref{hspan}), $\overline{\mbox{\Span}}\{x_j'\} = \HH$.  Furthermore, if $\sum_{j \geq 0} a_j x_j' = 0$, then 
	\begin{equation}
	\label{uniqueco}
	0 = (I-T T^\dagger){T^*}^{m} \left(\sum_{j \geq 0} a_j x_j' \right) = (I-T T^\dagger) \left( \sum_{j \geq m} a_{j} x_{j-m}' \right) = a_{m} x_0.
	\end{equation}
	Thus, $a_j = 0$ for all $j$. Therefore $\{x_j'\}$ form a Schauder basis.
	
	We now show that $x_j:= T^j x_0$ is a Schauder basis.  Let $\KK$ be the closed subspace of $\HH$ given by $\KK:=\overline{\mbox{\Span}_{j \geq 1}}\{x_j\}$.  Suppose that $z \perp \KK$.  Then by above, $z$ has a unique expansion in the Schauder basis $x_j'$.  Say, $z = \sum_{j \geq 0} b_j x_j'$. Thus,
	\[
	0 = \langle z, x_m \rangle = \langle {T^*}^m z, x_0 \rangle = \langle {T^*}^m z, (I - T T^\dagger)x_0 \rangle = \langle (I - T T^\dagger) {T^*}^m z, x_0 \rangle = b_m.
	\] 
	Hence, $b_j = 0$ for all $j$, so $z = 0$.  Therefore, $\KK= \HH$.  Now suppose that $\sum_{j \geq 0} c_j x_j = 0$.  Then the exact same argument appearing in Equation (\ref{uniqueco}) with ${T^*}^m$ replaced with ${T^\dagger}^m$ shows $c_j = 0 $ for all $j$.
	
	Finally, suppose $L$ is any left inverse of $T$.  Let $y_j = {L^*}^j x_0$. Replacing the roles of $x_j$ with $y_j$ and $x_j'$ with $x_j$ in the preceding paragraph, one concludes that  $y_j$ is a Schauder basis for $\HH$.
\end{proof}

\begin{cor}
	\label{T T' anal}
	Let $T \in \BH$ be a natural left invertible.  Then $T$ is analytic if and only if $T'$ is analytic. 
\end{cor}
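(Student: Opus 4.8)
The plan is to deduce the corollary from the density part of Theorem \ref{sbasis} together with the duality decomposition of Proposition \ref{anal decomp}, using the fact that the Cauchy dual operation is an involution to halve the work. (As throughout this section, $T$ is assumed natural, so that Theorem \ref{sbasis} applies.)

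First I would record the involution. For a natural left invertible $T$, Proposition \ref{Cauchy dual}(i) gives ${T'}^\dagger = T^*$, so that
\[
(T')' = ({T'}^\dagger)^* = (T^*)^* = T,
\]
while Proposition \ref{Cauchy dual}(ii) gives $\ker((T')^*) = \mathscr{E}$, so $T'$ is again a natural left invertible with the same wandering subspace. Hence it is enough to prove the single implication ``$T$ analytic $\Rightarrow T'$ analytic''; applying it to $T'$ in place of $T$ and using $(T')' = T$ then yields the reverse implication, and the corollary follows. So suppose $T$ is analytic and let $n = -\ind(T) = \dim \mathscr{E}$, with $\{x_{i,0}\}_{i=1}^{n}$ an orthonormal basis of $\mathscr{E} = \ker(T^*)$. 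By Theorem \ref{sbasis}(i) the family $\{T^{j}x_{i,0} : 1 \le i \le n,\ j \ge 0\}$ is a Schauder basis of $\HH$, so in particular its closed linear span is all of $\HH$; since this closed span is precisely $\bigvee_{j\ge 0}T^{j}\mathscr{E} = \HH_A$, we get $\HH_A = \HH$. Proposition \ref{anal decomp} provides the orthogonal decomposition $\HH = \HH_I' \oplus \HH_A$, whence $\HH_I' = \HH_A^{\perp} = \{0\}$, i.e.\ $\bigcap_{n\ge 1}(T')^{n}\HH = 0$, which is exactly the statement that $T'$ is analytic.

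The one genuinely substantive point, which I would flag as the crux, is the passage from ``$T$ analytic'' (that is, $\HH_I = \bigcap_n T^n\HH = 0$) to ``$\HH_A = \HH$''. Proposition \ref{anal decomp} on its own only yields the two equivalences ``$T$ analytic $\iff \HH_A' = \HH$'' (from $\HH = \HH_I \oplus \HH_A'$) and ``$T'$ analytic $\iff \HH_A = \HH$'' (from $\HH = \HH_I' \oplus \HH_A$, applied after noting $(T')' = T$), and these two conditions are a priori unrelated — the very fact that $\HH_I$ and $\HH_A$ need not be orthogonal is why there is no exact Wold decomposition here. What bridges them is the nontrivial assertion that analyticity forces the wandering subspace $\mathscr{E}$ to generate all of $\HH$ under the iterates of $T$ itself, and this is exactly the content of Theorem \ref{sbasis}(i). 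In that sense the hard work is already packaged in Theorem \ref{sbasis}, and the proof of the corollary is the short assembly above: Theorem \ref{sbasis}, the decomposition of Proposition \ref{anal decomp}, and the involutivity of the Cauchy dual.
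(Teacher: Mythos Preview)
Your proof is correct and follows essentially the same route as the paper's own argument: use Theorem~\ref{sbasis}(i) to conclude $\HH_A=\HH$ from analyticity of $T$, then invoke the orthogonal decomposition $\HH=\HH_I'\oplus\HH_A$ of Proposition~\ref{anal decomp} to obtain $\HH_I'=0$, and handle the converse by the involutivity $(T')'=T$. Your explicit verification of the involution and your commentary on why Theorem~\ref{sbasis} is the genuine bridge (rather than Proposition~\ref{anal decomp} alone) are welcome elaborations, but the logical skeleton is identical to the paper's one-line proof.
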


\begin{proof}
	If $T$ is analytic, then by Theorem \ref{sbasis}, $\HH_A = \HH$.  Hence $\HH_I' = 0$ by Proposition \ref{anal decomp}.  The converse statement is identical. 
\end{proof}

Theorem \ref{sbasis} illustrates how to construct Schauder bases for $\HH$ using a natural analytic left invertible operator $T$ and its Cauchy dual.  We reserve the notation of Theorem \ref{sbasis} for these bases. We make the following definition:
\begin{defn}
	Let $T$ be a natural analytic left invertible operator and $L$ be a left inverse of $T$.  Fix an orthonormal basis $\{x_{i,0}\}_{i=1}^n$ for $\mathscr{E} = \ker(T^*)$. Then
	\begin{equation}
	\label{Tbasis}
	\begin{array}{rcl}
	x_{i,j}&:=& T^j x_{i,0}\\
	x_{i,j}'&:=& {L^*}^j x_{i,0}.
	\end{array}
	\end{equation}
	We refer to the Schauder basis $\{x_{i,j}\}$ in Equation (\ref{Tbasis}) as the \textbf{basis of T with respect to $\{x_{i,0}\}_{i=1}^n$}.  Similarly, we refer to the basis $\{x_{i,j}'\}$ as the \textbf{dual basis of T with respect to $\{x_{i,0}\}_{i=1}^n$ and $L$}.

\end{defn}
If no mention is made to the choice of left inverse $L$, it is assumed that $L = T^\dagger$. While the above definition depends on the choice of orthonormal basis $\{x_{i,0}\}_{i=1}^n$ for $\mathscr{E}$, we will usually refer to each as the \textit{basis of $T$} and \textit{dual basis of $T$} without reference.

By definition of a Schauder basis, for each $f \in \HH$, there exists a unique sequences of scalars $\{\alpha_{i,j}\}$ and $\{\alpha_{i,j}'\}$ such that 
\[
f = \sum_{j \geq 0} \sum_{i=1}^n \alpha_{i,j} x_{i,j} =  \sum_{j \geq 0} \sum_{i=1}^n \alpha_{i,j}' x_{i,j}'.
\]
Naturally, one would like to have a relationship between $\{\alpha_{i,j}\}$ or $\{\alpha_{i,j}'\}$ in terms of the element $f \in \HH$.  We have the following useful characterization:

\begin{prop}
	\label{expansion}
	For each $f \in \HH$, we have the following expansions:
	\[
	f = \sum_{j \geq 0} \sum_{i=1}^n \langle f, x_{i,j}' \rangle x_{i,j} =  \sum_{j \geq 0} \sum_{i=1}^n \langle f, x_{i,j} \rangle x_{i,j}'.
	\]
\end{prop}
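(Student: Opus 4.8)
The plan is to use the Schauder basis machinery from Theorem \ref{sbasis} together with the biorthogonality between the basis $\{x_{i,j}\}$ and the dual basis $\{x_{i,j}'\}$. The key observation is that these two families are biorthogonal: $\langle x_{i,j}, x_{k,l}' \rangle = \delta_{ik}\delta_{jl}$. To see this, recall $x_{i,j} = T^j x_{i,0}$ and $x_{k,l}' = {T^\dagger}^{*l} x_{k,0}$ (taking $L = T^\dagger$), so $\langle x_{i,j}, x_{k,l}' \rangle = \langle T^j x_{i,0}, {T^\dagger}^{*l} x_{k,0} \rangle = \langle {T^\dagger}^l T^j x_{i,0}, x_{k,0} \rangle$. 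Since $x_{i,0} \in \mathscr{E} = \ker(T^\dagger)$, the computation splits into cases exactly as in the displayed formula for ${T^*}^m x_j'$ in the proof of Theorem \ref{sbasis}: if $l > j$ the expression is $0$; if $l \le j$ it equals $\langle T^{j-l} x_{i,0}, x_{k,0}\rangle$, and since $\{x_{k,0}\}$ is orthonormal and (when $j > l$) $T^{j-l}x_{i,0} \perp \mathscr{E} \ni x_{k,0}$ because $\mathscr{E} = \ran(T)^\perp$, this is $\delta_{ik}$ when $j = l$ and $0$ otherwise.

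Granting biorthogonality, I would argue as follows. Fix $f \in \HH$. By Theorem \ref{sbasis}(i), $\{x_{i,j}\}$ is a Schauder basis, so there is a unique scalar family $\{\alpha_{i,j}\}$ with $f = \sum_{j \geq 0}\sum_{i=1}^n \alpha_{i,j} x_{i,j}$, the sum converging in norm. The coefficient functionals of a Schauder basis are continuous (this is the standard fact that the coordinate functionals associated to a Schauder basis in a Banach space are bounded), so applying the functional $g \mapsto \langle g, x_{k,l}'\rangle$ to both sides and using biorthogonality term by term gives $\langle f, x_{k,l}'\rangle = \sum_{j,i}\alpha_{i,j}\langle x_{i,j}, x_{k,l}'\rangle = \alpha_{k,l}$. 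Hence $\alpha_{i,j} = \langle f, x_{i,j}'\rangle$, which is the first expansion. The second expansion is obtained identically: by Theorem \ref{sbasis}(ii), $\{x_{i,j}'\}$ is a Schauder basis, write $f = \sum \alpha_{i,j}' x_{i,j}'$, apply $g \mapsto \langle g, x_{k,l}\rangle$, and use biorthogonality to conclude $\alpha_{i,j}' = \langle f, x_{i,j}\rangle$.

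The only genuine content beyond bookkeeping is the biorthogonality identity, and even that is essentially the computation already carried out in the proof of Theorem \ref{sbasis} (the formula for ${T^*}^m x_j'$ there is the $i = k$, $n=1$ instance). The one point requiring a word of care is the interchange of the functional with the infinite sum, which is legitimate precisely because coordinate functionals of a Schauder basis are bounded; this is where one invokes the Banach-space theory rather than anything special to $\HH$. For the general index $-n$ case one simply carries the extra subscript $i$ through, exactly as in the proof of Theorem \ref{sbasis}. I do not expect any real obstacle here — this proposition is a routine corollary packaging the biorthogonal expansion.
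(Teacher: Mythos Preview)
Your proof is correct and is essentially the paper's argument reorganized: you establish biorthogonality $\langle x_{i,j}, x_{k,l}'\rangle = \delta_{ik}\delta_{jl}$ first and then read off the coefficients, whereas the paper computes directly by rewriting $\langle f, x_{i,m}'\rangle = \langle {T^\dagger}^m f, x_{i,0}\rangle$, applying ${T^\dagger}^m$ to the Schauder expansion of $f$, and pairing with $x_{i,0}$; it then records biorthogonality as a corollary (Corollary~\ref{bi ortho}) rather than as an input. Your appeal to the general Banach-space theorem that coordinate functionals of a Schauder basis are continuous is unnecessary here, since the functional you actually apply, $g \mapsto \langle g, x_{k,l}'\rangle$, is manifestly bounded as an inner product with a fixed vector; but this is a harmless redundancy, not an error.
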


\begin{proof}
	Suppose that $f = \sum_{j \geq 0} \sum_{i=1}^n \alpha_{i,j} x_{i,j}$.  Now, ${T^\dagger}^m x_{i,j} = 0$ if $j \leq m$ and $x_{i, j-m}$ otherwise.  Also, 	since $\{x_{i,0}\}$ is an orthonormal basis for $\ker(T^*)$, we have for each $m \geq 0$,
	\[
	\langle f, x_{i,m}' \rangle = \langle {T^\dagger}^m f, x_{i,0} \rangle = \alpha_{i,m}.
	\]
  The same argument shows that if we expand $f$ in terms of the dual basis of $T$ as $f = \sum_{j \geq 0} \sum_{i=1}^n \alpha_{i,j}' x_{i,j}'$, then $\alpha_{i,m}' = \langle f, x_{i,m} \rangle$. 
\end{proof}

\begin{cor}
	\label{bi ortho}
	The basis of $T$ is bi-orthogonal to the dual basis of $T$.  That is, $\langle x_{l,m}, x_{i,j}' \rangle = \delta_{l,i} \delta_{m,j}$
\end{cor}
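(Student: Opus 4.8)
The plan is to deduce the biorthogonality relation directly from Proposition \ref{expansion} together with the uniqueness clause in the definition of a Schauder basis. By Theorem \ref{sbasis}, $\{x_{i,j}\}$ is a Schauder basis for $\HH$, so every $f \in \HH$ has a \emph{unique} expansion $f = \sum_{j \geq 0}\sum_{i=1}^n \beta_{i,j} x_{i,j}$, and Proposition \ref{expansion} identifies these coefficients as $\beta_{i,j} = \langle f, x_{i,j}' \rangle$. First I would apply this to the particular vector $f = x_{l,m}$, which is itself one of the basis vectors and therefore admits the trivial expansion in which the coefficient at position $(l,m)$ is $1$ and all other coefficients vanish. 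Comparing with the formula of Proposition \ref{expansion} and invoking uniqueness of the coefficients gives $\langle x_{l,m}, x_{i,j}' \rangle = \delta_{l,i}\delta_{m,j}$ at once.

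As an alternative (and as a sanity check) I would also note one can compute $\langle x_{l,m}, x_{i,j}' \rangle$ by hand: with the default choice $L = T^\dagger$ we have $x_{i,j}' = ({T^\dagger}^{*})^{j} x_{i,0}$, so $\langle x_{l,m}, x_{i,j}' \rangle = \langle {T^\dagger}^{j} T^{m} x_{l,0}, x_{i,0} \rangle$. Using $T^\dagger T = I$ one reduces ${T^\dagger}^{j} T^{m}$ to $T^{m-j}$ when $m \geq j$ and to ${T^\dagger}^{j-m}$ when $m < j$; then the three cases $m > j$, $m = j$, $m < j$ are settled respectively by $\ran(T) \perp \ker(T^*) \ni x_{i,0}$, by orthonormality of $\{x_{i,0}\}$, and by $\ker(T^\dagger) = \ker(T^*) \ni x_{l,0}$ (Proposition \ref{T projection}).

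There is essentially no obstacle here: the statement is a formal corollary. The only point requiring a modicum of care is to make explicit that the coefficients in a Schauder basis expansion are unique — this is precisely condition \textit{ii.} in the definition of a Schauder basis — and to recall the standing convention that $L = T^\dagger$ when no left inverse is specified, so that $x_{i,j}'$ is the dual basis appearing in Proposition \ref{expansion}.
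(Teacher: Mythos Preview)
Your primary argument is exactly the paper's proof: apply Proposition~\ref{expansion} to $f = x_{l,m}$ and invoke uniqueness of Schauder basis coefficients. The alternative direct computation you sketch is a fine sanity check but is not needed.
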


\begin{proof}
	By Proposition \ref{expansion}, we have that
	\[
	x_{l,m} = \sum_{j \geq 0} \sum_{i=1}^n \langle x_{l,m}, x_{i,j}' \rangle x_{i,j}.
	\]
	However by definition, Schauder bases have a unique expansion in terms of the basis.  Hence, $\langle x_{l,m}, x_{i,j}' \rangle = 0$ unless $i = l$ and $j = m$. 
\end{proof}

Briefly, we would like to caution the reader about the order of basis and dual basis of $T$. A convergent series $\sum_{n \geq 0} x_n$ in a Banach space $X$ is said to be \textit{unconditionally convergent} if for every permutation $\sigma$ of $\mathbb{N}$, the series $\sum_{n \geq 0} x_{\sigma(n)}$ converges.  Otherwise, the series is said to be \textit{conditionally convergent}.  A Schauder basis $\{x_n\}$ in a Banach space $X$ is said to be a \textit{unconditional basis} if the series expansion $x = \sum_{n \geq 0} \alpha_n x_n$ is unconditional for every $x \in X$.  Otherwise, the basis is said to be \textit{conditional}.  Examples of unconditional bases for Hilbert spaces include orthonormal bases, and more generally, some frames. 

Unfortunately, all infinite dimensional Banach spaces with a basis must have conditional bases \cite{Pelczynski}. What is worse, verifying that a basis is unconditional is, in general, a very difficult task. Explicit constructions of conditional bases exist for Hilbert spaces.  Indeed, there is a class of examples for $L^2(\mathbb{T})$ of the form $\{e^{2\pi i nt} \phi(t)\}_{n \in \mathbb{Z}}$ for some $\phi \in L^2(\mathbb{T})$ (See \cite{Singer}, Example 11.2). From the author's perspective, it is not clear when the basis and dual basis of $T$ are unconditional.  Fortunately, this will not affect our analysis in any serious way.   At a minimum, we have the following trivial rearrangements:
\begin{prop}
	\label{order}
	Let $T$ be a natural analytic left invertible with $\mbox{\ind}(T) = -n$ for some $1 \leq n < \infty$.  Then for any permutation $\sigma$ of $\{1, \dots, n\}$, we have
	\[
	\sum_{j \geq 0} \sum_{i=1}^n \alpha_{i,j} x_{i,j} = \sum_{i=1}^n \sum_{j \geq 0}  \alpha_{i,j} x_{i,j} = \sum_{i=1}^n \sum_{j \geq 0}  \alpha_{\sigma(i),j} x_{\sigma(i),j} =  \sum_{j \geq 0} \sum_{i=1}^n  \alpha_{\sigma(i),j} x_{\sigma(i),j}
	\]
	whenever the sum converges.  Consequently, $\sum_{j \geq 0} \sum_{i=1}^n \alpha_{i,j} x_{i,j}$ converges if and only if $\sum_{j \geq 0}  \alpha_{i,j} x_{i,j}$ converges for each $i = 1, \dots , n$. 
\end{prop}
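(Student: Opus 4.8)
The plan is to treat the fourfold displayed identity as a statement about finite partial sums, which makes it essentially bookkeeping, and to put the real work into the ``consequently'' clause. First I would fix a permutation $\sigma$ of $\{1,\dots,n\}$ and $J\ge 0$, and note that the four quantities
\[
\sum_{j=0}^{J}\sum_{i=1}^{n}\alpha_{i,j}x_{i,j},\quad \sum_{i=1}^{n}\sum_{j=0}^{J}\alpha_{i,j}x_{i,j},\quad \sum_{i=1}^{n}\sum_{j=0}^{J}\alpha_{\sigma(i),j}x_{\sigma(i),j},\quad \sum_{j=0}^{J}\sum_{i=1}^{n}\alpha_{\sigma(i),j}x_{\sigma(i),j}
\]
are all sums in $\HH$ of the one finite family $\{\alpha_{i,j}x_{i,j}:1\le i\le n,\ 0\le j\le J\}$, merely in different orders, hence are equal; letting $J\to\infty$ shows the four corresponding sequences coincide and so converge, or fail to, together, with a common limit. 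In particular $\sum_{j\ge 0}\sum_{i=1}^{n}\alpha_{i,j}x_{i,j}$ converges if and only if the synchronized sum $\sum_{i=1}^{n}\sum_{j\ge 0}\alpha_{i,j}x_{i,j}:=\lim_{J}\sum_{i=1}^{n}\sum_{j=0}^{J}\alpha_{i,j}x_{i,j}$ does, and it remains to show these are equivalent to convergence of each column series $\sum_{j\ge 0}\alpha_{i,j}x_{i,j}$. One direction is immediate: if every column converges, to $f_i$ say, then $\sum_{i=1}^{n}\sum_{j=0}^{J}\alpha_{i,j}x_{i,j}=\sum_{i=1}^{n}\big(\sum_{j=0}^{J}\alpha_{i,j}x_{i,j}\big)\to\sum_{i=1}^{n}f_i$ as a finite sum of convergent sequences.

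For the converse I would assume $\sum_{j\ge0}\sum_{i=1}^n\alpha_{i,j}x_{i,j}$ converges, write $S_J:=\sum_{j=0}^{J}\sum_{i=1}^{n}\alpha_{i,j}x_{i,j}$ and $f:=\lim_J S_J$, and construct a bounded operator isolating the $i$-th column. Let $E_i$ be the rank-one orthogonal projection of $\HH$ onto $\C x_{i,0}$. Because $\{x_{i,0}\}_{i=1}^{n}$ is an orthonormal basis of $\mathscr{E}=\ker(T^*)$ and $\ran(T)\perp\mathscr{E}$, while $x_{l,m}=T^{m}x_{l,0}\in\ran(T)$ for $m\ge 1$, one gets $E_ix_{l,m}=\delta_{il}\delta_{m0}x_{i,0}$; combining this with $(T^\dagger)^{k}x_{l,m}=x_{l,m-k}$ for $m\ge k$ and $(T^\dagger)^{k}x_{l,m}=0$ for $m<k$ shows that
\[
R_i^{(N)}:=\sum_{k=0}^{N-1}T^{k}E_i(T^\dagger)^{k}
\]
satisfies $R_i^{(N)}x_{l,m}=\delta_{il}x_{i,m}$ for $m<N$ and $R_i^{(N)}x_{l,m}=0$ for $m\ge N$. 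Thus on finitely supported basis combinations $R_i^{(N)}x$ stabilizes in $N$ to the ``$i$-th column part'' of $x$, and $R_i^{(J+1)}S_J=\sum_{j=0}^{J}\alpha_{i,j}x_{i,j}$. If $\sup_N\|R_i^{(N)}\|<\infty$, then $R_i^{(N)}$ converges strongly to a bounded operator $R_i$, and $\sum_{j=0}^{J}\alpha_{i,j}x_{i,j}=R_i^{(J+1)}(S_J-f)+R_i^{(J+1)}f\longrightarrow R_if$, so the $i$-th column series converges and we are done.

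The hard part will be exactly the uniform bound $\sup_N\|R_i^{(N)}\|<\infty$. Summed over $i$ these operators telescope, by Proposition \ref{ker dagger n}, to $\sum_{i=1}^{n}R_i^{(N)}=\sum_{k=0}^{N-1}T^{k}P(T^\dagger)^{k}=I-T^{N}(T^\dagger)^{N}$, which acts as the partial-sum projection onto the span of the first $nN$ vectors of the Schauder basis $\{x_{i,j}\}$ and is hence uniformly bounded by the basis constant; but each $R_i^{(N)}$ individually projects onto the index set $\{(i,m):m<N\}$, which is an arithmetic progression in the linear order of the basis rather than an initial interval, and for a general Schauder basis such ``column'' projections need not be uniformly bounded. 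So this estimate has to exploit the concrete shift structure $x_{i,j}=T^{j}x_{i,0}$ together with analyticity, not abstract basis theory; the recursion $R_i^{(N)}=E_i+T\,R_i^{(N-1)}\,T^\dagger$ and the orthogonality $\ran(T)\perp\mathscr{E}$ are the natural starting point, and I expect this to be where the genuine content of the proposition lies.
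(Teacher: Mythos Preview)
The paper gives no proof of this proposition: it is stated immediately after the phrase ``At a minimum, we have the following trivial rearrangements,'' and the text then moves on. So there is no argument in the paper to compare your attempt against; you are in effect trying to supply what the author left implicit.

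Your treatment of the fourfold displayed equality is exactly right and is surely what the author had in mind: since the inner sum over $i$ is finite, the four expressions share the same sequence of $J$-th partial sums, so they converge or diverge together with a common limit. Likewise your ``easy'' direction of the consequence (each column convergent $\Rightarrow$ the synchronized double sum convergent) is correct and routine.

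Where you and the paper part ways is the ``only if'' direction. You have, correctly, recognised that this is not a statement of abstract Schauder-basis bookkeeping: in the natural ordering $(1,0),(2,0),\dots,(n,0),(1,1),\dots$ the $i$-th column is an arithmetic progression, not an initial interval, and for a conditional basis the associated ``column projections'' $R_i^{(N)}$ need not be uniformly bounded. Your reduction to the estimate $\sup_N\|R_i^{(N)}\|<\infty$ is clean, your identification $R_i^{(N)}x=\sum_{k<N}\langle x,x'_{i,k}\rangle x_{i,k}$ is correct, and your honest admission that the recursion $R_i^{(N)}=E_i+T\,R_i^{(N-1)}T^\dagger$ plus the orthogonality $\ran(T)\perp\mathscr{E}$ do not obviously close the loop is well taken. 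The paper offers no hint here: the author appears to regard the whole proposition as elementary, and the subsequent uses of it in Section~3 (e.g.\ in the reproducing-kernel construction) only invoke the uncontroversial identity part and the easy implication. So you have not missed a trick in the paper; you have located a point the paper glosses over. If you want to pursue the bound, the natural structural fact to try to exploit is that $\sum_{i=1}^n R_i^{(N)}=I-T^N(T^\dagger)^N$ is an \emph{oblique} (not orthogonal) projection onto $\bigvee_{k<N}T^k\mathscr{E}$ along $T^N\HH$, together with the shift relations $T R_i^{(N)}=R_i^{(N+1)}T$ --- but be aware that nothing in the paper guarantees this succeeds for general analytic $T$ with $n\ge 2$.
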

%This remark is useful when we construct a canonical model for $T$ as multiplication by $z$ on a reproducing kernel Hilbert space of analytic functions in Section Three.  In order to conduct a more thorough analysis of $\A_T$, we will later consider the case when $\mbox{\ind}(T) = -1$. 

%%%%%%%%%%%%%%%%%%%%%%%%%%%%%%%%%%%%%%%%%%%%%%%%%%%%%%%%%%
%%%%%%%%%%%%%%%%%%%%%%%%%%%%%%%%%%%%%%%%%%%%%%%%%%%%%%%%%%
%%%%%%%%%%%%%%%%%%%%%%%%%%%%%%%%%%%%%%%%%%%%%%%%%%%%%%%%%%
%%%%%%%%%%%%%%%%%%%%%%%%%%%%%%%%%%%%%%%%%%%%%%%%%%%%%%%%%%
%%%%%%%%%%%%%%%%%%%%%%%%%%%%%%%%%%%%%%%%%%%%%%%%%%%%%%%%%%
%%%%%%%%%%%%%%%%%%%%%%%%%%%%%%%%%%%%%%%%%%%%%%%%%%%%%%%%%%
%%%%%%%%%%%%%%%%%%%%%%%%%%%%%%%%%%%%%%%%%%%%%%%%%%%%%%%%%%
%%%%%%%%%%%%%%%%%%%%%%%%%%%%%%%%%%%%%%%%%%%%%%%%%%%%%%%%%%
\section{Cowen-Douglas Operators - The Analytic Model}
%%%%%%%%%%%%%%%%%%%%%%%%%%%%%%%%%%%%%%%%%%%%%%%%%%%%%%%%%%
%%%%%%%%%%%%%%%%%%%%%%%%%%%%%%%%%%%%%%%%%%%%%%%%%%%%%%%%%%
%%%%%%%%%%%%%%%%%%%%%%%%%%%%%%%%%%%%%%%%%%%%%%%%%%%%%%%%%%
%%%%%%%%%%%%%%%%%%%%%%%%%%%%%%%%%%%%%%%%%%%%%%%%%%%%%%%%%%
%%%%%%%%%%%%%%%%%%%%%%%%%%%%%%%%%%%%%%%%%%%%%%%%%%%%%%%%%%
%%%%%%%%%%%%%%%%%%%%%%%%%%%%%%%%%%%%%%%%%%%%%%%%%%%%%%%%%%
%%%%%%%%%%%%%%%%%%%%%%%%%%%%%%%%%%%%%%%%%%%%%%%%%%%%%%%%%%
%%%%%%%%%%%%%%%%%%%%%%%%%%%%%%%%%%%%%%%%%%%%%%%%%%%%%%%%%%

In the late 70s, Cowen and Douglas discovered that operators possessing an open set of eigenvalues can be associated with a particular Hermitian holomorphic bundle  \cite{Cowen}, \cite{Cowen2}. These operators, now called Cowen-Douglas operators, could in some cases be completely classified by simple geometric properties. For example, when the rank of the bundle is one, the curvature serves as a complete set of unitary invariants \cite{Cowen2}.

Cowen-Douglas operators have played an important role in operator theory, servicing as a bridge between operator theory  and complex geometry.  The definition is rigid enough to allow for classification based on local spectral data.  However, the definition is also flexible enough to allow for rich examples - including many backward weighted shifts and adjoints of some subnormal operators. The definition of Cowen-Douglas operators is as follows: 

\begin{defn}
	\label{CDdef}
	Given an open subset $\Omega$ of $\C$ and a positive integer $n$, we say that $R$ is of \textbf{Cowen-Douglas class n}, and write $R \in B_n(\Omega)$ if
	
	\begin{enumerate}[i.]
		\item $\Omega \subset \sigma(R)$
		\item $(R-\lambda)\HH = \HH$ for all $\lambda \in \Omega$
		\item $\dim(\ker(R-\lambda)) = n$ for all $\lambda \in \Omega$
		\item $\bigvee_{\lambda \in \Omega} \ker(R-\lambda) = \HH$
	\end{enumerate}
	
\end{defn}

Thus if $R \in B_n(\Omega)$, then $R$ contains an open set of eigenvalues such that each eigenspace has dimension $n$, and the span of these eigenspaces is dense in $\HH$. Associated to Cowen-Douglas operators is a bundle structure known as a Hermitian holomorphic vector bundle.  

\begin{defn}
A \textbf{Hermitian holomorphic vector bundle of rank $n$ over $\Omega$} consists of the following data:
\begin{enumerate}[i.]
	\item A complex manifold $E$
	\item A holomorphic map $\pi: E \rightarrow \Omega$ such that each fiber $E_\lambda:=\pi^{-1}(\lambda)$ is isomorphic to $\C^n$
	\item For each $\lambda_0 \in \Omega$, there exists a neighborhood $\Delta$ of $\lambda_0$ and functions $\{\gamma_i\}_{i=1}^n$ with $\gamma_i:\Omega \rightarrow E$ such that $\{\gamma_i(\lambda)\}_{i=1}^n$ form a basis for $E_\lambda$.
\end{enumerate}
A \textbf{cross-section} $E$ is a map $\gamma: \Omega \rightarrow E$ such that $\pi(\gamma(\lambda)) = \lambda$ for all $\lambda \in \Omega$.  The bundle is \textbf{trivial} if $\Delta$ may be taken to be $\Omega$.  The \textbf{trivial bundle of rank $n$ over $\Omega$} is $\Omega \times \C^n$ with $\pi(\lambda, x) = \lambda$. 
\end{defn}

If $R \in B_n(\Omega)$, then the set
\[
E_R:= \{ (\lambda, x) \in \Omega \times \HH: x \in \ker(R - \lambda) \}
\]
with the mapping $\pi: E_R \rightarrow \Omega$ via $\pi(\lambda, x) = \lambda$ defines sub-bundle of the trivial bundle of rank $n$ over $\Omega$. It is known that $E_R$ provides a complete set of unitary invariants for operators in the Cowen-Douglas class \cite{Cowen}.  Specifically, if $E_{R_1}$ is isomorphic to $E_{R_2}$ as holomorphic vector bundles, then $R_1$ is unitarily equivalent to $R_2$.  This approach to Cowen-Douglas theory highlights the beautiful connections that exist between complex geometry and operator theory.  

The sections of the bundle $E_R$ provide an equivalent avenue of study.  Given $R \in B_n(\Omega)$, we can represent $R$ as the adjoint of  multiplication by $z$ on a reproducing kernel Hilbert space.  The approach of this paper more closely follows this model.  We will outline this construction below, and connect it to our work on bases in Section Two. For more information about Cowen-Douglas operators, see \cite{Cowen}, \cite{Curto}, \cite{Zhu}.

%%%%%%%%%%%%%%%%%%%%%%%%%%%%%%%%%%%%%%%%%%%%%%%%%%%%%%%%%%
%%%%%%%%%%%%%%%%%%%%%%%%%%%%%%%%%%%%%%%%%%%%%%%%%%%%%%%%%%
%%%%%%%%%%%%%%%%%%%%%%%%%%%%%%%%%%%%%%%%%%%%%%%%%%%%%%%%%%
%%%%%%%%%%%%%%%%%%%%%%%%%%%%%%%%%%%%%%%%%%%%%%%%%%%%%%%%%%
\subsection{Analytic Left Invertibles and Cowen-Douglas Operators}
%%%%%%%%%%%%%%%%%%%%%%%%%%%%%%%%%%%%%%%%%%%%%%%%%%%%%%%%%%
%%%%%%%%%%%%%%%%%%%%%%%%%%%%%%%%%%%%%%%%%%%%%%%%%%%%%%%%%%
%%%%%%%%%%%%%%%%%%%%%%%%%%%%%%%%%%%%%%%%%%%%%%%%%%%%%%%%%%
%%%%%%%%%%%%%%%%%%%%%%%%%%%%%%%%%%%%%%%%%%%%%%%%%%%%%%%%%%

The connection between Cowen-Douglas operators and left invertibles is found in the following:

\begin{thmy}
	\label{CD}
	Let $T \in \BH$ be a left invertible operator with $\mbox{\ind}(T) = -n$, for $n \geq 1$.  Then the following are equivalent:
	
	\begin{enumerate}[i.]
		\item $T$ is analytic
		\item $T'$ is analytic
		\item There exists $\epsilon > 0$ such that $T^* \in B_n(\Omega)$ for $\Omega = \{z: |z|<\epsilon\}$
		\item There exists $\epsilon > 0$ such that $T^\dagger \in B_n(\Omega)$ for $\Omega = \{z: |z|<\epsilon\}$
	\end{enumerate}
	
\end{thmy}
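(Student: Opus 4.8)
\emph{Proof proposal.} My plan is to knock out the two ``soft'' equivalences immediately and then do all the real work on (i)$\Leftrightarrow$(iii). The equivalence (i)$\Leftrightarrow$(ii) is already Corollary~\ref{T T' anal}. For (ii)$\Leftrightarrow$(iv) I would simply note that $T'$ is again a natural left invertible with $\ind(T')=-n$ (Proposition~\ref{Cauchy dual}) and that ${T'}^{*}=({T^{\dagger}}^{*})^{*}=T^{\dagger}$; hence the statement ``$T'$ analytic $\iff$ ${T'}^{*}\in B_{n}(\Omega)$ for some disk $\Omega$ centered at $0$'' is literally (i)$\Leftrightarrow$(iii) applied with $T$ replaced by $T'$. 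So everything reduces to proving: $T$ is analytic if and only if $T^{*}\in B_{n}(\Omega)$ for some $\Omega=\{|\lambda|<\epsilon\}$.

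For (i)$\Rightarrow$(iii) I would take $\epsilon=\|T^{\dagger}\|^{-1}$ (noting $\|T'\|=\|{T^{\dagger}}^{*}\|=\|T^{\dagger}\|$) and verify the four clauses of Definition~\ref{CDdef} on $\Omega=\{|\lambda|<\epsilon\}$ directly. Clauses i.--iii. are a perturbation argument: for $|\lambda|<\epsilon$ the operator $T-\bar\lambda$ is bounded below and within distance $\|T^{\dagger}\|^{-1}$ of $T$, so by Proposition~\ref{close ind} it is injective Fredholm of index $-n$; dualizing, $T^{*}-\lambda$ is surjective with $n$-dimensional kernel, and $\lambda\in\sigma(T^{*})$ because it is an eigenvalue. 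Clause iv. is where analyticity is used: since $T$ is analytic, Proposition~\ref{anal decomp} gives $\HH=\HH_{A}'=\bigvee_{j\ge0}(T')^{j}\mathscr{E}$. Fixing an orthonormal basis $\{x_{i,0}\}_{i=1}^{n}$ of $\mathscr{E}$, for each $\mu\in\Omega$ the Cauchy-type kernel $k_{i}^{\mu}:=\sum_{j\ge0}\mu^{j}(T')^{j}x_{i,0}$ converges in norm (as $|\mu|\,\|T'\|<1$) and, using $T^{*}T'=I$ and $T^{*}x_{i,0}=0$, satisfies $T^{*}k_{i}^{\mu}=\mu k_{i}^{\mu}$; so $k_{i}^{\mu}\in\ker(T^{*}-\mu)$. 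If $z$ is orthogonal to $\ker(T^{*}-\mu)$ for all $\mu\in\Omega$ then $\sum_{j\ge0}\bar\mu^{j}\langle z,(T')^{j}x_{i,0}\rangle=\langle z,k_{i}^{\mu}\rangle=0$ on a disk, so each coefficient vanishes, forcing $z\perp\HH_{A}'=\HH$ and $z=0$. This gives clause iv., hence $T^{*}\in B_{n}(\Omega)$.

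For (iii)$\Rightarrow$(i) I would argue by contradiction. Suppose $T^{*}\in B_{n}(\Omega)$, $\Omega=\{|\lambda|<\epsilon\}$, but $T$ is not analytic, so $\HH_{I}\neq0$. Then $T$ restricts to an invertible operator on the nonzero closed $T$-invariant subspace $\HH_{I}$, so $T-\mu$ is invertible on $\HH_{I}$ for $|\mu|<M^{-1}$ where $M:=\|(T\mid_{\HH_{I}})^{-1}\|$; consequently $\HH_{I}=(T-\mu)\HH_{I}\subseteq\ran(T-\mu)$ for such $\mu$. On the other hand, clauses ii.--iii. of the Cowen--Douglas property show $T-\bar\lambda$ has closed range with $\ran(T-\bar\lambda)=\ker(T^{*}-\lambda)^{\perp}$ for $\lambda\in\Omega$, so clause iv. gives $\bigcap_{\lambda\in\Omega}\ran(T-\bar\lambda)=\bigl(\bigvee_{\lambda\in\Omega}\ker(T^{*}-\lambda)\bigr)^{\perp}=\{0\}$. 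To make the two scales match I would shrink $\Omega$ to $\Omega_{0}=\{|\lambda|<\delta\}$ with $\delta=\tfrac12\min(\epsilon,M^{-1})$; this is legitimate because membership in $B_{n}(\Omega)$ persists on any nonempty connected open subdomain (\cite{Cowen}). Then $\HH_{I}\subseteq\bigcap_{|\mu|<\delta}\ran(T-\mu)=\{0\}$, contradicting $\HH_{I}\neq0$, and $T$ is analytic.

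I expect the (iii)$\Rightarrow$(i) direction to be the delicate one: the Cowen--Douglas hypothesis is an eigenvector statement on an a priori fixed small disk, while analyticity, $\bigcap_{n}T^{n}\HH=0$, is a global statement, and the only bridge is the inclusion $\HH_{I}\subseteq\bigcap_{\lambda}\ran(T-\lambda)$ combined with the freedom to rescale the disk. Everything else --- the identity $\|T'\|=\|T^{\dagger}\|$, the adjoint and Fredholm bookkeeping, and convergence of the kernels $k_{i}^{\mu}$ --- is routine.
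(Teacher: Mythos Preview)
Your proof is correct and the overall architecture --- reducing everything to (i)$\Leftrightarrow$(iii) and transporting to (ii)$\Leftrightarrow$(iv) via the Cauchy dual --- is exactly the paper's. The construction of the Cauchy-type kernels $k_i^\mu=\sum_{j\ge0}\mu^j(T')^jx_{i,0}$ for (i)$\Rightarrow$(iii) is also the same object the paper builds (its $\gamma_i(\lambda)$ in Definition~\ref{section}).

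The one place you diverge is in how the spanning condition is handled in each direction. For (i)$\Rightarrow$(iii) clause~iv, you verify directly that the eigenvectors $\{k_i^\mu\}$ span by a power-series argument; the paper instead invokes Lemma~\ref{Li lemma} (the Cowen--Douglas observation that clause~iv follows once one checks $\bigvee_k\ker((T^*-\lambda_0)^k)=\HH$ at a single point), taking $\lambda_0=0$ where this is immediate from analyticity. More significantly, for (iii)$\Rightarrow$(i) you run a contradiction through $\HH_I$, perturbation on $\HH_I$, and the subdomain persistence of $B_n(\Omega)$; the paper's route (Corollary~\ref{CD implies anal}) is a one-liner: Lemma~\ref{Li lemma} at $\mu_0=0$ gives $\bigvee_k\ker((T^*)^k)=\HH$, and taking orthogonal complements yields $\bigcap_k\ran(T^k)=0$ directly. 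So your expectation that (iii)$\Rightarrow$(i) is ``the delicate one'' is misplaced --- with Lemma~\ref{Li lemma} in hand it is the easier direction --- though your longer argument is valid and the subdomain persistence you cite is itself a consequence of that same lemma.
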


Theorem \ref{CD} is a cornerstone result for this work.  It serves two fundamental roles.  First, Theorem \ref{CD} allows us to leverage the powerful machinery associated with Cowen-Douglas operators into classifying the algebras $\A_T$.  Second, it provides us with a desirable canonical model. Concretely, Theorem \ref{CD} allows us to represent $T$ as multiplication by $z$ restricted to a reproducing kernel Hilbert space of analytic functions. 

To help illuminate this relationship, we will take a constructive approach to proving Theorem \ref{CD}.  This will also connect to our results on Schauder bases from the previous section.  We prove the implication \textit{iii.} implies \textit{i.} after stating the following lemma noted in Cowen and Douglas' original work:

\begin{lemma}[\cite{Cowen}]
	\label{Li lemma}
	Let $\Theta$ be an open subset of $\C$ and $S \in B_m(\Theta)$.  Then for any fixed $\mu_0 \in \Theta$, 
	\[
	\bigvee_{k \geq 1} \ker(S - \mu_0)^k = \HH.
	\]
	Moreover, if $\Omega \subset \mathbb{C}$ is open, $\lambda_0 \in \Omega$, $n$ is a positive integer, and $R \in \BH$ satisfies 
	
	\begin{enumerate}[i.]
		\item $\Omega \subset \sigma(S)$
		\item  $(R-\lambda)\HH = \HH$ for all $\lambda \in \Omega$
		\item $\dim(\ker(R-\lambda)) = n$ for all $\lambda \in \Omega$
		\item $\bigvee_{k \geq 1} \ker(R - \lambda_0)^k = \HH$.
	\end{enumerate}
	
	\noindent	Then $R \in B_n(\Omega)$. 
\end{lemma}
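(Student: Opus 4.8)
The statement has two halves, and both rest on the same two facts about the holomorphic eigenvector bundle: the existence of a local \emph{holomorphic frame} and the algebraic relations satisfied by its Taylor coefficients. Suppose $\gamma_1,\dots,\gamma_m$ are holomorphic $\HH$-valued functions on a disk $\Delta$ centered at $\mu_0$ with $\{\gamma_i(\lambda)\}_{i=1}^m$ a basis of $\ker(S-\lambda)$ for each $\lambda\in\Delta$. Writing $\gamma_i(\lambda)=\sum_{k\ge 0}(\lambda-\mu_0)^k e_k^{(i)}$ and substituting into $(S-\lambda)\gamma_i(\lambda)=0$, comparison of coefficients yields $(S-\mu_0)e_0^{(i)}=0$ and $(S-\mu_0)e_k^{(i)}=e_{k-1}^{(i)}$ for $k\ge 1$, so $e_k^{(i)}\in\ker(S-\mu_0)^{k+1}$. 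I would first record, as a purely linear-algebraic lemma, that these coefficients in fact \emph{span} the generalized eigenspaces: using the injection $(S-\mu_0)\colon \ker(S-\mu_0)^{k+1}/\ker(S-\mu_0)^{k}\hookrightarrow \ker(S-\mu_0)^{k}/\ker(S-\mu_0)^{k-1}$ together with $\dim\ker(S-\mu_0)=m$, an induction on $k$ shows $\{[e_k^{(i)}]\}_i$ is a basis of each quotient, whence $\ker(S-\mu_0)^k\subseteq \overline{\Span}\{e_\ell^{(i)}\}$ for every $k$.

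For the first half, the identity theorem gives $\overline{\Span}\{e_k^{(i)}:i,k\}=\overline{\Span}\{\gamma_i(\lambda):i,\lambda\in\Delta\}$: a vector orthogonal to all values of a holomorphic function on $\Delta$ is orthogonal to all its Taylor coefficients, and conversely. Combining this with the spanning lemma shows that $M_\lambda:=\bigvee_{k\ge1}\ker(S-\lambda)^k$ is \emph{independent of the base point} $\lambda\in\Delta$, each $M_\lambda$ being the common closed span of the frame values on $\Delta$. Thus $\lambda\mapsto M_\lambda$ is locally constant; since $\Theta$ is connected (as is standard for the Cowen--Douglas classes), it is globally constant, say equal to $M$. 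Then $M\supseteq \ker(S-\lambda)$ for every $\lambda\in\Theta$, so by axiom (iv) of $B_m(\Theta)$ we get $M\supseteq\bigvee_{\lambda\in\Theta}\ker(S-\lambda)=\HH$, i.e. $\bigvee_k\ker(S-\mu_0)^k=\HH$.

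For the second half it suffices to verify the one nontrivial clause of Definition \ref{CDdef}, namely $\bigvee_{\lambda\in\Omega}\ker(R-\lambda)=\HH$; clauses (i)--(iii) of the definition are immediate from the hypotheses, since $\dim\ker(R-\lambda)=n\ge1$ already forces $\lambda\in\sigma(R)$. Hypotheses (ii) and (iii) say precisely that $\lambda\mapsto R-\lambda$ is a holomorphic family of Fredholm operators of constant index $n$ on $\Omega$, so near $\lambda_0$ there is a holomorphic frame $\gamma_1,\dots,\gamma_n$ for $\ker(R-\lambda)$. Its Taylor coefficients $e_k^{(i)}$ lie in $\overline{\Span}\{\gamma_i(\lambda):\lambda\in\Delta\}\subseteq\bigvee_{\lambda\in\Omega}\ker(R-\lambda)=:P$, and by the spanning lemma above $\ker(R-\lambda_0)^k\subseteq \overline{\Span}\{e_\ell^{(i)}\}\subseteq P$ for all $k$. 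Taking the join over $k$ and invoking hypothesis (iv), $P\supseteq\bigvee_k\ker(R-\lambda_0)^k=\HH$, so $P=\HH$ and $R\in B_n(\Omega)$.

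The main obstacle is the production of the holomorphic frame in the second half. For $S\in B_m(\Theta)$ such a frame is essentially built into the Cowen--Douglas definition, but for the operator $R$ of the converse we know only the Fredholm data (ii)--(iii), and the existence of a jointly holomorphic basis of $\ker(R-\lambda)$ is the standard---but genuinely nontrivial---input from analytic Fredholm theory (local holomorphic triviality of the kernel bundle of a holomorphic family of constant-index Fredholm operators). The remaining delicacy is the spanning lemma itself: checking that the maps $(R-\lambda_0)\colon \ker(R-\lambda_0)^{k+1}/\ker(R-\lambda_0)^{k}\to\ker(R-\lambda_0)^{k}/\ker(R-\lambda_0)^{k-1}$ are not merely injective but carry the frame's coefficient classes onto a spanning set at each level, so that the dimensions of the successive quotients stay equal to $n$. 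Everything else reduces to the identity theorem and bookkeeping.
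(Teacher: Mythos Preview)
The paper does not actually supply a proof of this lemma; it is quoted from Cowen and Douglas' original paper and used as a black box. So there is no ``paper's proof'' to compare against---what you have written is essentially a reconstruction of the classical Cowen--Douglas argument, and it is correct.

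A couple of small remarks. Your reduction of the first half to the local constancy of $\lambda\mapsto M_\lambda$ genuinely uses that $\Theta$ is connected; this is the standard convention for $B_m(\Theta)$ but is not stated in the paper's Definition~\ref{CDdef}, so it is worth flagging explicitly. Second, in the spanning lemma you should say a word about why the base case of the induction holds: the classes $[e_0^{(i)}]=\gamma_i(\mu_0)$ form a basis of $\ker(S-\mu_0)$ precisely because $\{\gamma_i(\mu_0)\}$ is a frame at $\mu_0$. After that, the injectivity of $(S-\mu_0)$ on successive quotients together with $\dim\ker(S-\mu_0)=m$ forces all quotients to have dimension exactly $m$, and the $[e_k^{(i)}]$ to be bases, exactly as you describe. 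Your identification of the holomorphic-frame existence for $R$ (via the constant-index analytic Fredholm theory) as the one nontrivial external input is accurate; in Cowen--Douglas this is handled by an explicit right-inverse construction, but the content is the same.
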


\begin{cor}
	\label{CD implies anal}
	Let $T \in \BH$, $n \in \mathbb{N}$, $\epsilon > 0$ and $\Omega = \{z: |z|<\epsilon\}$.  If $T^*\in B_n(\Omega)$, then $T$ is an analytic, left invertible operator with $\mbox{\ind}(T) = -n$.
\end{cor}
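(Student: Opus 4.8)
The statement to prove is: if $T^* \in B_n(\Omega)$ for $\Omega = \{z : |z| < \epsilon\}$, then $T$ is an analytic, natural left invertible with $\mbox{\ind}(T) = -n$. There are three things to establish: (1) $T$ is left invertible with $\mbox{\ind}(T) = -n$; (2) $T$ is analytic, i.e. $\HH_I = \bigcap_{n\ge 1} T^n\HH = 0$. The bulk of the work is extracting the correct spectral/Fredholm picture from the four Cowen-Douglas axioms applied to $T^*$ near $0$.

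First I would unpack the Cowen-Douglas axioms for $T^* \in B_n(\Omega)$ at the point $\lambda = 0$. Axiom \textit{ii.} says $(T^* - \lambda)\HH = \HH$ for all $\lambda \in \Omega$; in particular $T^*\HH = \HH$, so $T^*$ is surjective, hence $T$ is bounded below (injective with closed range), hence left invertible by Proposition \ref{equivalent_left}. Axiom \textit{iii.} gives $\dim\ker(T^* - \lambda) = n$ for all $\lambda \in \Omega$; at $\lambda = 0$ this says $\dim\ker(T^*) = n < \infty$, so $T$ is a natural left invertible (and $n \ge 1$ is part of the hypothesis). Combined with surjectivity, $T^* - \lambda$ is Fredholm for $\lambda \in \Omega$ with $\mbox{\ind}(T^* - \lambda) = n$; taking adjoints, $\mbox{\ind}(T - \bar\lambda) = -n$, and at $\lambda = 0$ this yields $\mbox{\ind}(T) = -n$. (Alternatively one can cite Proposition \ref{ess spec} directly once naturalness is known.)

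The real content is analyticity. I want to show $\bigcap_{k \ge 1} T^k \HH = 0$. The key input is Lemma \ref{Li lemma}, which tells us $\bigvee_{k \ge 1} \ker(T^* - 0)^k = \bigvee_{k \ge 1}\ker(T^*)^k = \HH$. Now observe that $\ker((T^*)^k) = \ran(T^k)^\perp$: indeed $x \perp \ran(T^k)$ iff $\langle x, T^k y\rangle = 0$ for all $y$ iff $(T^*)^k x = 0$. Therefore
\[
\bigcap_{k \ge 1} \overline{\ran(T^k)} = \bigcap_{k \ge 1} \ker((T^*)^k)^\perp = \left( \bigvee_{k\ge 1} \ker((T^*)^k) \right)^\perp = \HH^\perp = 0.
\]
Since $T$ is left invertible, $T^k$ is left invertible for each $k$ (a product of left invertibles), so $\ran(T^k)$ is already closed and $\overline{\ran(T^k)} = T^k\HH$. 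Hence $\HH_I = \bigcap_k T^k\HH = 0$, which is exactly analyticity. This also re-proves $\dim\ker(T^*) = n$ is consistent with $\mbox{\ind}(T) = -n$ via the Fredholm/naturalness discussion above.

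The only mild subtlety, and the step I'd expect to need the most care, is the interchange $\bigcap \ker((T^*)^k)^\perp = \left(\bigvee \ker((T^*)^k)\right)^\perp$ together with the identification $\ran(T^k)$ closed: the first is the standard fact that $\left(\bigvee_\alpha M_\alpha\right)^\perp = \bigcap_\alpha M_\alpha^\perp$ for any family of subspaces, which is routine; the second follows because a finite product of bounded-below operators is bounded below, so $T^k$ has closed range — again routine but worth stating. Everything else is bookkeeping with the $B_n(\Omega)$ axioms and Lemma \ref{Li lemma}. So the proof is short: translate the axioms into "left invertible, natural, index $-n$", then use Lemma \ref{Li lemma} plus the orthocomplement identity to kill $\HH_I$.
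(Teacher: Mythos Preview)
Your proposal is correct and follows essentially the same approach as the paper: use axiom \textit{ii.} at $\lambda=0$ to get left invertibility, axiom \textit{iii.} to get the index, and Lemma \ref{Li lemma} together with the orthocomplement identity $\bigl(\bigvee_k \ker((T^*)^k)\bigr)^\perp = \bigcap_k \ran(T^k)$ to get analyticity. The only difference is cosmetic: the paper computes $\mbox{\ind}(T)=-n$ by routing through the Cauchy dual $T'$ (using $T^*=(T')^\dagger$ and Proposition \ref{Cauchy dual}), whereas you read it off directly from $\dim\ker(T^*)=n$ and injectivity of $T$, which is arguably cleaner.
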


\begin{proof}
	By assumption, $0 \in \Omega \subset \sigma(T^*)$.  By condition \textit{ii.} of the definition of Cowen-Douglas operators, $T^*$ is onto.  Therefore, $T$ is left invertible. 
	
	As $T$ is left invertible, its Cauchy dual $T'$ is well defined.  Recall that $T^* = (T')^\dagger$.  Since $T^* \in B_n(\Omega)$, it follows that $\mbox{\ind}(T') = -n$. By Proposition \ref{Cauchy dual} and condition \textit{iii.} of Cowen-Douglas operators, we have $\mbox{\ind}(T) = \mbox{\ind}(T') = -n$.   Thus, all that remains to be shown is that $T$ is analytic.  By lemma \ref{Li lemma}, $\HH = \bigvee_{k \geq 1} \ker ({T^*}^k)$.  Therefore,
	\[
	0 = \left(\bigvee_{k \geq 1} \ker ({T^*}^k) \right)^\perp = \bigcap_{k \geq 1} \ker ({T^*}^k)^\perp = \bigcap_{k \geq 1} \mbox{ran}(T^k). \qedhere
	\]
\end{proof}

Next we show that if $T$ is a natural analytic left invertible, then $T^* \in B_n(\Omega)$.  This will be done in several steps. First, we will show that $T^*$ possess an open set $\Omega$ of eigenvalues.  We establish some notation for the open set $\Omega$ that will appear in the implication \textit{i.} implies \textit{iii.} of Theorem \ref{CD}:

\begin{defn}
	\label{omega}
	Suppose $T$ is a natural analytic left invertible operator.   We define
	\[
	\Omega_T := \{z \in \mathbb{C}: |z|< \|T^\dagger\|^{-1}\}.
	\]
\end{defn}
It follows from Proposition \ref{left_facts} and Proposition \ref{close ind} that if $T$ is a natural analytic left invertible operator, and $\lambda \in \Omega_T$, then $T+\lambda$ is left invertible with $\ind(T) = \ind(T+\lambda)$.  Furthermore, if $\lambda \in \Omega_T$ then since  $|\lambda| < \|T^\dagger\|^{-1}$ and $T' = {T^\dagger}^*$, the operator $\lambda T'$ has norm less than 1.  Therefore, the operator  $I - \lambda T'$ is invertible with
	\[
	(I - \lambda T')^{-1} = \sum_{j \geq 0} \lambda^j {T'}^j.
	\]

\begin{lemma}
	\label{choice}
	Let $T$ be an analytic left invertible operator with $\mbox{\ind}(T) = -n$ for some positive integer $n$.   Let $\{x_{i,0}\}_{i=1}^n$ be an orthonormal basis for $\ker(T^*)$, and 
	\[
	x_{i,j}' = {T'}^j x_{i,0} = ((T^\dagger)^*)^j x_{i,0}
	\]
	be the dual basis of $T$ with respect to $T^\dagger$. Then for each $i=1, \dots, n$, the maps $\gamma_i:\Omega_T \rightarrow \HH$ via
	\[
	\gamma_i(\lambda):= \sum_{j \geq 0} \lambda^j x_{i,j}'
	\]
	are well defined.  Furthermore, the maps $\gamma_i:\Omega_T \rightarrow \HH$ are analytic.
\end{lemma}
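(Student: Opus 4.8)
The plan is to recognize $\gamma_i(\lambda)$ as the image of the fixed vector $x_{i,0}$ under the operator $(I-\lambda T')^{-1}$, and then read off both well-definedness and analyticity from the convergence of the Neumann series already analyzed in Lemma \ref{lambda inv}.

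First I would record the norm estimate that drives everything. Since $T' = {T^\dagger}^*$ we have $\|T'\| = \|T^\dagger\|$, so for $\lambda \in \Omega_T$ (Definition \ref{omega}) we get $\|\lambda T'\| = |\lambda|\,\|T^\dagger\| < 1$. In particular
\[
\|x_{i,j}'\| = \|{T'}^j x_{i,0}\| \le \|T'\|^j\,\|x_{i,0}\| = \|T^\dagger\|^j ,
\]
so $\limsup_{j\to\infty}\|x_{i,j}'\|^{1/j} \le \|T^\dagger\|$. By the root test the $\HH$-valued power series $\sum_{j\ge 0}\lambda^j x_{i,j}'$ has radius of convergence at least $\|T^\dagger\|^{-1}$, hence converges absolutely in $\HH$ for every $\lambda \in \Omega_T$. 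This establishes that $\gamma_i\colon \Omega_T \to \HH$ is well defined.

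For analyticity I would use that a vector-valued power series centered at $0$ with radius of convergence $\ge \|T^\dagger\|^{-1}$ is, by definition, analytic on the disk $\Omega_T = \{|z| < \|T^\dagger\|^{-1}\}$; so $\gamma_i$ is analytic there. Equivalently, and perhaps more transparently, I would invoke Lemma \ref{lambda inv} to write
\[
\gamma_i(\lambda) = \Big(\sum_{j\ge 0}\lambda^j {T'}^j\Big) x_{i,0} = (I-\lambda T')^{-1} x_{i,0},
\]
observe that $\lambda \mapsto (I-\lambda T')^{-1}$ is a norm-analytic $\BH$-valued map on $\Omega_T$ (the series $\sum_j \lambda^j {T'}^j$ converges in operator norm there), and note that composing with the bounded linear evaluation $A \mapsto A x_{i,0}$ preserves analyticity.

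There is essentially no real obstacle in this lemma; the only points requiring a moment of care are the identifications $T' = {T^\dagger}^*$ and $\|T'\| = \|T^\dagger\|$ (immediate from $\|A\| = \|A^*\|$), and the convention that ``analytic'' for an $\HH$-valued map may be taken in the power-series sense — which is exactly the form in which $\gamma_i$ is presented. The content here is purely the bookkeeping that the convergence radius matches the radius of the disk $\Omega_T$ on which the subsequent construction of the reproducing kernel Hilbert space will take place.
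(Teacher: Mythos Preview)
The proposal is correct and matches the paper's approach essentially verbatim: the paper also writes $\gamma_i(\lambda) = (I-\lambda T')^{-1} x_{i,0}$ via Lemma~\ref{lambda inv} and reads off well-definedness and analyticity from the norm-analyticity of $\lambda \mapsto (I-\lambda T')^{-1}$ on $\Omega_T$. Your additional root-test estimate on $\|x_{i,j}'\|$ is a harmless elaboration of the same Neumann-series bound.
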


\begin{proof}
	The operator $I - \lambda T'$ is invertible by construction.  Thus for each $i = 1, \dots n$,
	\[
	(I-\lambda T')^{-1} \left( x_{i,0} \right) = \sum_{j \geq 0} \lambda^j {T'}^j \left( x_{i,0} \right)  =  \sum_{j \geq 0} \lambda^j x_{i,j}' = \gamma_i(\lambda)
	\]
	exists for each $\lambda \in \Omega_T$.  Since the map $\lambda \mapsto (I - \lambda T')^{-1}$ is well defined and analytic on $\Omega_T$, we have that the maps $\gamma_i$ are analytic.
\end{proof}

In light of this observation, we reserve the following notation:

\begin{defn}
	\label{section}
	Given an analytic left invertible $T$ with $\mbox{\ind}(T) = -n$ for some positive integer $n$, let $\Omega_T$ be as in  Definition \ref{omega}.    Let $\{x_{i,0}\}_{i=1}^n$ be an orthonormal basis for $\ker(T^*)$,  and $x_{i,j}' = {T'}^j x_{i,0}$ be the dual basis of $T$ with respect to $T^\dagger$.  We define
	\[
	\gamma_i(\lambda):= \sum_{j \geq 0} \lambda^j x_{i,j}'.
	\]
	
\end{defn}

\begin{lemma}
	\label{evectors}
	Let $T$ be an analytic left invertible with $\mbox{\ind}(T) = -n$, and $\{\gamma_i\}_{i=1}^n$ be as in Definition \ref{section}.  Then for each $i$,
\[
\gamma_i(\lambda) \in \ker(T^* - \lambda).
\]
Hence, $\Omega_T \subset \sigma_p(T^*)$.
\end{lemma}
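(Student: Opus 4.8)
The plan is to verify directly that each $\gamma_i(\lambda)$ is killed by $T^* - \lambda$, using the series representation $\gamma_i(\lambda) = \sum_{j \geq 0} \lambda^j x_{i,j}'$ from Definition \ref{section}, and then to observe that these vectors are nonzero (their ``leading term'' is $x_{i,0} \neq 0$), which forces $\lambda \in \sigma_p(T^*)$ for every $\lambda \in \Omega_T$.

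First I would recall the key identity satisfied by the dual basis vectors. Since $x_{i,j}' = {T'}^j x_{i,0}$ and, by Proposition \ref{Cauchy dual}, ${T'}^\dagger = T^*$, we have $T^* x_{i,j}' = {T'}^\dagger {T'}^j x_{i,0} = {T'}^{j-1} x_{i,0} = x_{i,j-1}'$ for $j \geq 1$, while $T^* x_{i,0}' = T^* x_{i,0} = 0$ because $x_{i,0} \in \ker(T^*) = \mathscr{E}$. (This is exactly the shift behavior recorded in the proof of Theorem \ref{sbasis}.) Next, by Lemma \ref{choice} the series $\gamma_i(\lambda) = \sum_{j \geq 0} \lambda^j x_{i,j}'$ converges in norm on $\Omega_T$, indeed $\gamma_i(\lambda) = (I - \lambda T')^{-1} x_{i,0}$ by Lemma \ref{lambda inv}. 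Applying the bounded operator $T^*$ term-by-term, I compute
\[
T^* \gamma_i(\lambda) = \sum_{j \geq 1} \lambda^j T^* x_{i,j}' = \sum_{j \geq 1} \lambda^j x_{i,j-1}' = \sum_{k \geq 0} \lambda^{k+1} x_{i,k}' = \lambda \gamma_i(\lambda),
\]
so $\gamma_i(\lambda) \in \ker(T^* - \lambda)$. Alternatively one can run this through the resolvent identity: from $\gamma_i(\lambda) = (I-\lambda T')^{-1} x_{i,0}$ and $T^* T' = I$ one gets $T^* \gamma_i(\lambda) = T^*(I - \lambda T')^{-1} x_{i,0}$, and since $T^*(I-\lambda T') = T^* - \lambda$ on the relevant vectors, a short manipulation yields $(T^* - \lambda)\gamma_i(\lambda) = 0$; but the term-by-term argument is cleanest.

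Finally, to conclude $\Omega_T \subset \sigma_p(T^*)$ I must check $\gamma_i(\lambda) \neq 0$. This follows by pairing against $x_{i,0}$ and using the biorthogonality of Corollary \ref{bi ortho} (or Proposition \ref{expansion}): the projection $P = I - TT^\dagger$ onto $\mathscr{E}$ satisfies $P x_{i,j}' = \delta_{j,0} x_{i,0}$, so $P\gamma_i(\lambda) = x_{i,0} \neq 0$, hence $\gamma_i(\lambda) \neq 0$. Therefore $\lambda$ is an eigenvalue of $T^*$ for every $\lambda \in \Omega_T$.

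I do not expect a serious obstacle here; the statement is essentially a bookkeeping consequence of the Cauchy-dual shift relation ${T'}^\dagger = T^*$ together with the convergence already established in Lemma \ref{choice}. The only point requiring mild care is justifying the term-by-term application of $T^*$ to the infinite series, which is immediate from norm convergence and continuity of $T^*$, and the nonvanishing of $\gamma_i(\lambda)$, handled by biorthogonality as above.
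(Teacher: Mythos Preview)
Your proof is correct and follows essentially the same route as the paper: apply $T^*$ term-by-term to the series $\gamma_i(\lambda) = \sum_{j \geq 0} \lambda^j x_{i,j}'$, use that $T^* = {T'}^\dagger$ acts as a backward shift on the $x_{i,j}'$, and conclude $T^* \gamma_i(\lambda) = \lambda \gamma_i(\lambda)$. Your explicit check that $\gamma_i(\lambda) \neq 0$ via $P\gamma_i(\lambda) = x_{i,0}$ is a nice addition---the paper simply asserts ``the rest of the statement follows.''
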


\begin{proof}
	Since $T^*$ is the Moore-Penrose inverse of $T'$, it follows from the definition of $\gamma_i$ that
	\[
	T^* \gamma_i(\lambda) = \sum_{j=0}^\infty \lambda^j T^* x_{i,j}' = \sum_{j=1}^\infty \lambda^j x_{i,j-1}' = \lambda \gamma_i(\lambda).
	\]
	The rest of the statement follows. 
\end{proof}

\begin{prop}
	\label{equiv}
	Let $T$ be an analytic left invertible operator  with $\mbox{\ind}(T) = -n$ for some positive integer $n$.  Let $\Omega_T$ be be as in  Definition \ref{omega}. Then $T^* \in B_n(\Omega_T)$
\end{prop}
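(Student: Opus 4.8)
The plan is to verify the four defining conditions of the Cowen-Douglas class $B_n(\Omega_T)$ for $T^*$, using the machinery already built up in Lemmas \ref{lambda inv}--\ref{evectors}. Condition \textit{ii.} (surjectivity of $T^* - \lambda$ for $\lambda \in \Omega_T$) is immediate from Corollary \ref{omega left}: for $\lambda \in \Omega_T$ the operator $T + \lambda$ (equivalently $-(T - (-\lambda))$, but more directly $T^* - \bar\mu$ for appropriate $\mu$) is left invertible, so its adjoint is right invertible, i.e. onto; one just has to match up signs and the disc $\Omega_T$ being a disc about $0$ it is symmetric under $\lambda \mapsto \bar\lambda$ and $\lambda \mapsto -\lambda$, so this is harmless. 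Condition \textit{i.} ($\Omega_T \subset \sigma(T^*)$) follows from Lemma \ref{evectors}, which shows every $\lambda \in \Omega_T$ is an eigenvalue of $T^*$, hence in $\sigma(T^*)$.

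Next I would handle condition \textit{iii.}, that $\dim \ker(T^* - \lambda) = n$ for all $\lambda \in \Omega_T$. One inequality is already in hand: Lemma \ref{evectors} produces $n$ vectors $\gamma_1(\lambda), \dots, \gamma_n(\lambda)$ in $\ker(T^* - \lambda)$, and these are linearly independent because applying $(I - \lambda T')$ (invertible by Lemma \ref{lambda inv}) sends $\gamma_i(\lambda)$ back to $x_{i,0}$, and the $x_{i,0}$ are an orthonormal basis of $\ker(T^*)$; so $\dim \ker(T^* - \lambda) \geq n$. For the reverse inequality I would invoke the Fredholm theory: by Corollary \ref{omega left}, $T + \lambda$ is a natural left invertible with $\operatorname{ind}(T + \lambda) = \operatorname{ind}(T) = -n$, hence $T^* + \bar\lambda$ (and after adjusting signs, $T^* - \lambda$ for $\lambda$ ranging over the disc) is Fredholm with index $+n$ and trivial cokernel (it is onto), so $\dim \ker(T^* - \lambda) = \operatorname{ind}(T^* - \lambda) = n$. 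Care is needed that the translate is still \emph{natural} so that Proposition \ref{ess spec} applies, but $\dim \ker((T+\lambda)^*) = n$ is exactly what we're extracting, and for small $\lambda$ this is $> 0$; alternatively just use that index is locally constant and $T^* - \lambda$ is onto.

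Condition \textit{iv.}, that $\bigvee_{\lambda \in \Omega_T} \ker(T^* - \lambda) = \HH$, is where I expect the real work, and here I would use Lemma \ref{Li lemma} rather than proving the span condition directly. By that lemma it suffices to check the weaker hypotheses: conditions \textit{i.}--\textit{iii.} of the lemma are the ones just verified, and the remaining hypothesis is $\bigvee_{k \geq 1} \ker((T^* - \lambda_0)^k) = \HH$ for the single point $\lambda_0 = 0$. But $\ker((T^*)^k) = \left(\operatorname{ran}(T^k)\right)^\perp$, so $\bigvee_{k \geq 1}\ker((T^*)^k) = \left(\bigcap_{k \geq 1} \operatorname{ran}(T^k)\right)^\perp = \HH_I^\perp = \HH$ precisely because $T$ is analytic, i.e. $\HH_I = 0$. (This is the same computation that appeared in the proof of Corollary \ref{CD implies anal}, run in reverse.) Then Lemma \ref{Li lemma} upgrades this to the full Cowen-Douglas span condition over the open set $\Omega_T$, completing the verification that $T^* \in B_n(\Omega_T)$.

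The main obstacle is bookkeeping around sign/conjugation conventions: $\Omega_T$ is a disc about the origin and the statements involve $T^*$ rather than $T$, so one must be careful that "$T + \lambda$ left invertible for $|\lambda| < \|T^\dagger\|^{-1}$'' translates correctly to "$T^* - \lambda$ surjective and Fredholm of the right index for $\lambda \in \Omega_T$.'' Since the disc is invariant under $\lambda \mapsto \bar\lambda$ and $\lambda \mapsto -\lambda$ this is purely cosmetic, but it should be stated cleanly. The genuinely substantive inputs — the eigenvector construction, invertibility of $I - \lambda T'$, and the analyticity/span dictionary $\HH_I = 0 \iff \bigvee_k \ker((T^*)^k) = \HH$ — are all already available, so the proof should be short.
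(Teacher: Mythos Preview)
Your proposal is correct and follows essentially the same route as the paper: both verify conditions \textit{i.}--\textit{iii.} via Corollary \ref{omega left} and Lemma \ref{evectors}, then invoke Lemma \ref{Li lemma} at $\lambda_0 = 0$ together with analyticity of $T$ to obtain condition \textit{iv.} The only cosmetic difference is that the paper proves linear independence of the $\gamma_i(\lambda)$ via bi-orthogonality (Corollary \ref{bi ortho}) rather than your cleaner observation that $(I - \lambda T')\gamma_i(\lambda) = x_{i,0}$, though this step is redundant once the Fredholm index already gives $\dim\ker(T^* - \lambda) = n$.
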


\begin{proof}
	Pick an orthonormal basis $\{x_{i,0}\}$ for $\ker(T^*)$.   If $\lambda \in \Omega_T$, then $T-\lambda$ is left invertible with Fredholm index $-n$.   Therefore, each eigenspace $\ker(T^*-\lambda)$ is n-dimensional for each $\lambda \in \Omega_T$. By Lemma \ref{evectors}, we have $\{\gamma_i(\lambda)\}_{i=1}^n \subset \ker(T^*-\lambda)$. Moreover, since $\{x_{i,j}'\}$ form a Schauder basis, we must have that the collection $\{\gamma_i(\lambda)\}_{i=1}^n$ is linearly independent.    
	
	Indeed, suppose there exists a $\mu \in \C$ such that $\gamma_i(\lambda) = \mu \gamma_k(\lambda)$ for some $\lambda \in \Omega_T$ with $i \neq k$.  If $x_{i,j} = T^j x_{i,0}$ is the basis associated to $T$, then by Lemma \ref{bi ortho} we have for each $j$ 
	\[
	\lambda^j = \langle \gamma_i(\lambda), x_{i,j} \rangle = \mu \langle \gamma_k(\lambda), x_{i,j} \rangle = \mu \sum_{j=0}^\infty \lambda^j \langle x_{k,j}',x_{i,j} \rangle =   0.
	\]
	This forces $\lambda = 0$. Hence, $x_{i,0}' = \gamma_i(0) = \mu \gamma_k(0) = \mu x_{k,0}'$.  But since $\{x_{i,0}\}_{i=1}^n$ form an orthonormal basis for $\ker(T^*)$, this cannot happen. Hence, $\{\gamma_i(\lambda)\}_{i=1}^n$ form a (perhaps non-orthogonal) basis for $\ker(T^*-\lambda)$.
	
	Lastly, if we choose $\lambda_0 = 0$, then 
	\[
	\ker(T^* - \lambda_0)^k = \ker((T^*)^k) = (\mbox{ran}{T^k})^\perp =  \left( \bigcap_{j=0}^k T^j \HH \right)^\perp.
	\]
	Since $T$ is analytic, it follows that $\bigvee_{k \geq 1} \ker((T^*)^k) =  \HH$. By Lemma \ref{Li lemma}, we have that $T^* \in B_n(\Omega)$.
\end{proof}

We highlight an important and interesting feature of the basis $\{x_{i,j}'\}$ that came up in the previous proof:

\begin{cor}
	\label{span ker}
 	Let $T$ be an analytic left invertible operator  with $\mbox{\ind}(T) = -n$ for some positive integer $n$, and $\{\gamma_i\}_{i=1}^n$ be the analytic maps from Definition \ref{section}.  Then for each $\lambda \in \Omega_T$, $\{\gamma_i(\lambda)\}_{i=1}^n$ form a spanning set for $\ker(T^*-\lambda)$.
\end{cor}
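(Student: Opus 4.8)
The plan is to observe that Corollary \ref{span ker} is essentially a repackaging of facts already established in the proof of Proposition \ref{equiv}, and that the only genuinely new content is to make the spanning statement clean and self-contained. First I would fix $\lambda \in \Omega_T$. By Corollary \ref{omega left}, the operator $T - \lambda$ is left invertible with Fredholm index $-n$, and since $T-\lambda$ is injective, $\dim\ker((T-\lambda)^*) = \dim\ker(T^*-\lambda) = n$. So the eigenspace $\ker(T^* - \lambda)$ is exactly $n$-dimensional.

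Next I would recall from Lemma \ref{evectors} that $\gamma_i(\lambda) \in \ker(T^* - \lambda)$ for each $i = 1, \dots, n$, so it suffices to show that $\{\gamma_i(\lambda)\}_{i=1}^n$ is a linearly independent set of $n$ vectors in an $n$-dimensional space; linear independence then forces it to be a basis, hence a spanning set. The linear independence argument is precisely the one carried out in the proof of Proposition \ref{equiv}: suppose $\sum_i c_i \gamma_i(\lambda) = 0$ with not all $c_i$ zero. Pairing against the basis vector $x_{k,0} = x_{k,0}$ of $T$ (using bi-orthogonality, Corollary \ref{bi ortho}) and using $\gamma_i(\lambda) = \sum_{j\geq 0}\lambda^j x_{i,j}'$, one gets $0 = \langle \sum_i c_i \gamma_i(\lambda), x_{k,0}\rangle = \sum_i c_i \langle x_{i,0}', x_{k,0}\rangle = c_k$ for every $k$, a contradiction. (Here I am using that $\langle x_{i,j}', x_{k,0}\rangle = \delta_{ik}\delta_{j0}$, which is Corollary \ref{bi ortho}.) Thus $\{\gamma_i(\lambda)\}_{i=1}^n$ is linearly independent.

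Combining the two steps: $\{\gamma_i(\lambda)\}_{i=1}^n$ is a linearly independent subset of the $n$-dimensional space $\ker(T^*-\lambda)$, so it is a basis and in particular spans $\ker(T^*-\lambda)$. Since $\lambda \in \Omega_T$ was arbitrary, this proves the corollary.

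There is no real obstacle here — the corollary is a direct consequence of the internal computations of Proposition \ref{equiv} and the dimension count from Corollary \ref{omega left}. The only thing requiring minor care is making sure the linear-independence pairing is written against the correct basis ($x_{i,j}$, the basis of $T$, not the dual basis) so that bi-orthogonality applies cleanly; this is a bookkeeping point rather than a conceptual one. One could alternatively cite Proposition \ref{equiv} directly and say "as shown in its proof," but it is cleaner to restate the two-line independence argument so the corollary stands on its own.
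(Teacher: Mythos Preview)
Your proposal is correct and follows essentially the same approach as the paper: the corollary is extracted directly from the proof of Proposition \ref{equiv}, combining the dimension count from Corollary \ref{omega left}, membership in $\ker(T^*-\lambda)$ from Lemma \ref{evectors}, and linear independence via bi-orthogonality. Your linear-independence argument (pairing $\sum_i c_i \gamma_i(\lambda)$ against $x_{k,0}$ to read off $c_k = 0$) is in fact cleaner and more general than the paper's, which only checks pairwise dependence $\gamma_i(\lambda) = \mu\gamma_k(\lambda)$.
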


We have thus shown that statements \textit{i} and \textit{iii.} of Theorem \ref{CD} are equivalent.    However, when paired with Corollary \ref{T T' anal} we see that $T^\dagger$ must also be Cowen-Douglas.  This completes the proof of Theorem \ref{CD}.

One consequence of Theorem \ref{CD} is a reformulation of the definition of $\A_T$ and the operator algebra generated by a Cowen-Douglas operator and a particular right inverse.  Indeed, recall that $\A_T$ is defined by
\[
\A_T:= \overline{\mbox{\Alg}}\{T, T^\dagger \}.
\]
If $\epsilon >0$, $\Omega = \{z: |z|<\epsilon\}$ , and $R \in B_n(\Omega)$, then by definition $R$ is right invertible.  There exists a canonical right inverse of $R$, which we denote by $T$, such that $\mbox{ran}(T) = \ker(R)^\perp$.  By construction, $T$ is left invertible, and $R = T^\dagger$, the Moore-Penrose inverse of $T$.    Thus, we arrive at an equivalent viewpoint of study:
\begin{cor}
	Let $\epsilon > 0$, $\Omega = \{z: |z|<\epsilon\}$, and $R \in B_n(\Omega)$. If $T$ is the right inverse of $R$ such that $\mbox{ran}(T) = \ker(R)^\perp$, then $T$ is an analytic left invertible operator with $R = T^\dagger$.  Hence, 
	\[
	\A_T = \overline{\mbox{\Alg}}\{T, R\}.
	\]
\end{cor}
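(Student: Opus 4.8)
The plan is to take the construction of $T$ described in the paragraph preceding the corollary as given, and then verify three things in order: that $T$ is a natural left invertible, that the Moore-Penrose inverse $T^\dagger$ equals $R$, and that $T$ is analytic. Once $R = T^\dagger$ is in hand, the asserted identity $\A_T = \overline{\mbox{\Alg}}\{T,R\}$ is merely a restatement of the definition $\A_T = \overline{\mbox{\Alg}}\{T,T^\dagger\}$, so the entire content of the corollary is concentrated in those three verifications.

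First I would justify the construction and deduce left invertibility. Since $R \in B_n(\Omega)$ with $0 \in \Omega$, condition \textit{ii.} of Definition \ref{CDdef} gives that $R$ is surjective and condition \textit{iii.} gives $\dim\ker(R) = n$, so $\ker(R)$ is closed (indeed finite-dimensional) and $R$ restricts to a bounded bijection from $\ker(R)^\perp$ onto $\HH$; the open mapping theorem makes this restriction invertible, and $T \in \BH$ is defined to be its inverse (viewed on $\HH$ via the inclusion $\ker(R)^\perp \hookrightarrow \HH$). From $RT = I$ it follows that $T$ is injective and that $R$ is a left inverse of $T$, so $T$ is left invertible; by Proposition \ref{equivalent_left} $T$ then has closed range, and by construction $\ran(T) = \ker(R)^\perp$. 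To see $R = T^\dagger$ I would invoke the uniqueness clause of Proposition \ref{MPI}: since $\ker(R)$ is closed, $\ran(T)^\perp = \bigl(\ker(R)^\perp\bigr)^\perp = \ker(R)$, which is property \textit{i.} (recall $\ran(T)^\perp = \ker(T^*)$ automatically), and $R T x = x$ for every $x \in \HH \supseteq \ker(T)^\perp$ because $RT = I$, which is property \textit{ii.}; uniqueness forces $R = T^\dagger$. In particular $\dim\ker(T^*) = \dim\ker(T^\dagger) = \dim\ker(R) = n$, so $0 < \dim\ker(T^*) < \infty$, $T$ is natural, and Proposition \ref{ess spec} yields $\ind(T) = -\dim\ker(T^\dagger) = -n$.

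It then remains to check analyticity, which is exactly the payoff of Theorem \ref{CD}: $T$ is a left invertible with $\ind(T) = -n$ for which condition \textit{iv.} holds verbatim (the same $\epsilon$ and $\Omega$ work, since $T^\dagger = R \in B_n(\Omega)$), hence condition \textit{i.} holds and $T$ is analytic. If one wishes to avoid reusing the index computation, one can instead note that $(T')^* = \bigl((T^\dagger)^*\bigr)^* = T^\dagger = R \in B_n(\Omega)$, apply Corollary \ref{CD implies anal} to the Cauchy dual $T'$ to conclude that $T'$ is analytic with $\ind(T') = -n$, and then transfer both conclusions back to $T$ via Corollary \ref{T T' anal} and Proposition \ref{Cauchy dual}. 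I do not anticipate a genuine obstacle here; the only steps demanding a little care are confirming that $R$ inherits the two defining properties of the Moore-Penrose inverse and making sure the hypotheses of Theorem \ref{CD} — left invertibility together with $\ind(T) = -n$ — are all in place before it is applied.
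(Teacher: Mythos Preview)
Your proposal is correct and matches the paper's approach: the paper states this corollary with essentially no proof beyond the preceding paragraph, which observes that by construction $T$ is left invertible with $R = T^\dagger$, and then implicitly invokes Theorem \ref{CD} (condition \textit{iv.} $\Rightarrow$ \textit{i.}) for analyticity. You have simply filled in the details of that outline, including the verification that $R$ satisfies the two defining properties of the Moore-Penrose inverse in Proposition \ref{MPI} and the index computation needed to place $T$ under the hypotheses of Theorem \ref{CD}.
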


%%%%%%%%%%%%%%%%%%%%%%%%%%%%%%%%%%%%%%%%%%%%%%%%%%%%%%%%%%
%%%%%%%%%%%%%%%%%%%%%%%%%%%%%%%%%%%%%%%%%%%%%%%%%%%%%%%%%%
%%%%%%%%%%%%%%%%%%%%%%%%%%%%%%%%%%%%%%%%%%%%%%%%%%%%%%%%%%
%%%%%%%%%%%%%%%%%%%%%%%%%%%%%%%%%%%%%%%%%%%%%%%%%%%%%%%%%%
\subsection{The Associated Reproducing Kernel Hilbert Space}
%%%%%%%%%%%%%%%%%%%%%%%%%%%%%%%%%%%%%%%%%%%%%%%%%%%%%%%%%%
%%%%%%%%%%%%%%%%%%%%%%%%%%%%%%%%%%%%%%%%%%%%%%%%%%%%%%%%%%
%%%%%%%%%%%%%%%%%%%%%%%%%%%%%%%%%%%%%%%%%%%%%%%%%%%%%%%%%%
%%%%%%%%%%%%%%%%%%%%%%%%%%%%%%%%%%%%%%%%%%%%%%%%%%%%%%%%%%

As previously remarked, the general theory of Cowen-Douglas operators allows one to represent $T$ as multiplication by $z$ on a reproducing kernel Hilbert space of analytic functions over $\Omega$. This construction, when connected to the bases associated to $T$ and $T'$ in Section Two, provides an updated heurstic for the algebra $\A_T$.  First, let us establish some notation. Given a set $G \subset \C$, let $G^* := \{\overline{\lambda}: \lambda \in G\}$.  Notice that $\Omega_T^* = \Omega_T$ as a set.   We make the following definition:

\begin{defn}[\cite{Zhu}]
	Let $R \in B_n(\Omega)$.  A holomorphic cross-section of $\gamma: \Omega \rightarrow E_R$ of the bundle $E_R$ is a \textbf{spanning holomorphic cross-section} if 
	\[
	\overline{\mbox{span}}\{\gamma(\lambda):\lambda \in \Omega\} = \HH.
	\]
\end{defn}

Spanning holomorphic cross-sections give rise to reproducing kernel Hilbert spaces of analytic functions. Indeed, fix a spanning holomorphic section $\gamma$.  For each $f \in \HH$, define an analytic function   $\hat{f}_\gamma \in H({\Omega}^*)$ as follows:

\begin{equation}
\label{anal formula}
\hat{f}_\gamma(\lambda) = \langle f, \gamma(\overline{\lambda}) \rangle \quad \quad \quad \quad \quad \lambda \in \Omega^*.
\end{equation}
Let $\widehat{\HH}_\gamma = \{\hat{f}_\gamma: f \in \HH\} \subset H(\Omega^*)$.  Equip $\widehat{\HH}_\gamma$ with the inner product afforded by $\HH$.  That is, for each $f, g \in \HH$, define the inner product on $\widehat{\HH}_\gamma$ via
\[
\langle \hat{f}_\gamma, \hat{g}_\gamma \rangle_\gamma := \langle f, g \rangle.
\]
Define a linear map $U_\gamma: \HH \rightarrow \widehat{\HH}_\gamma$ via $U_\gamma(f) = \hat{f}_\gamma$.  Notice that because $\gamma$ is a spanning section, $U_\gamma$ is a unitary.  Indeed, if $\hat{f}_\gamma = \hat{g}_\gamma$, then for each $\lambda \in \Omega^*$,
\[
0 = \hat{f}_\gamma(\lambda) - \hat{g}_\gamma(\lambda) = \langle f - g, \gamma(\overline{\lambda})\rangle
\]
Since the span of 	$\{\gamma(\lambda):\lambda \in \Omega\}$ is dense in $\HH$, $f - g= 0$.

Furthermore, $\widehat{\HH}_\gamma$ is a reproducing kernel Hilbert space over the set $\Omega^*$.  Indeed, as $\gamma(\overline{\lambda}) \in \HH$, there exists a function $\widehat{\gamma(\overline{\lambda})}_\gamma \in \widehat{\HH}_\gamma$.  For all $f \in \HH$ and $\lambda \in \Omega$, 
\[
\hat{f}_\gamma(\lambda) = \langle f, \gamma(\overline{\lambda}) \rangle = \left\langle \hat{f}_\gamma, \widehat{\gamma(\overline{\lambda})}_\gamma \right\rangle_\gamma.
\]
Hence, the reproducing kernel at $\lambda \in \Omega^*$ is given by $k_{\lambda} = \widehat{\gamma(\overline{\lambda})}_\gamma$.  Therefore, given $\lambda, \mu \in \Omega^*$, the reproducing kernel may be computed as follows:
\[
K(\lambda, \mu) = \langle k_\mu, k_\lambda \rangle =  \left\langle \widehat{\gamma(\overline{\mu})}_\gamma,  \widehat{\gamma(\overline{\lambda})}_\gamma \right\rangle_\gamma = \langle\gamma(\overline{\mu}),  \gamma(\overline{\lambda}) \rangle.
\]

If $R \in B_n(\Omega)$, then the Hermitian holomorphic vector bundle $(E_R, \pi)$ has many choices of cross sections $\gamma: \Omega \rightarrow E_R$.  For example, if $T$ is a natural analytic left invertible,  the $\gamma_i$ in Definition \ref{section} are cross sections for $T^*$. By construction,  the collection of cross-sections $\{\gamma_i\}_{i=1}^n$ satisfy $\{\gamma_i(\lambda)\}_{i=1}^n$ form a basis for $E_\lambda$. Since the fibers $E_\lambda$ of $E_R$ are  $\ker(R-\lambda)$, and $\bigvee \ker(R-\lambda) = \HH$, we have that the collection of $\gamma_i:\Omega \rightarrow \HH$ have dense span in $\HH$.  The following theorem states that we can combine these sections to get a spanning holomorphic cross-section:

\begin{thm}[\cite{Zhu} - Theorem 5]
	\label{zhu theorem}
	Let $\HH$ be a Hilbert space, and $\{\gamma_i\}_{i=1}^n$ be holomorphic functions from $\Omega$ to $\HH$ such that
	\[
	\bigvee_{\lambda \in \Omega} \mbox{\Span}_{i=1, \dots, n}\{\gamma_i(\lambda)\} = \HH.
	\]
	  Then there exists holomorphic functions $\{\phi_i\}_{i=1}^n$ from $\Omega \rightarrow \C$ such that the map $\gamma: \Omega \rightarrow \HH$ defined by 
	\[
	\gamma(\lambda):= \sum_{i=1}^n \phi_i(\lambda) \gamma_i(\lambda) \quad \quad \quad \lambda \in \Omega
	\]
also spans $\HH$.
\end{thm}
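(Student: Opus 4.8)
The plan is to prove this by a successive‑approximation argument in the Fréchet space $\mathcal{O}(\Omega,\C^n)$ of holomorphic $\C^n$‑valued functions (topology of uniform convergence on compacta), the engine being an approximate realization lemma that lets us perturb the coefficient functions so as to capture one more vector of a fixed orthonormal basis of $\HH$ at a time. We may assume $\Omega$ connected (the general case is handled component by component); note first that $\HH$ is separable, since $\overline{\Span}\{\gamma_i(\lambda):\lambda\in D,\ 1\le i\le n\}=\HH$ for any countable dense $D\subset\Omega$. For holomorphic $\phi=(\phi_1,\dots,\phi_n):\Omega\to\C^n$ write $\gamma_\phi:=\sum_i\phi_i\gamma_i$. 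The core is the following \textbf{One‑step Lemma}: given such a $\phi$, a vector $v\in\HH$, a compact $K\subset\Omega$, and $\varepsilon,\delta>0$, there are a holomorphic $\tilde\phi:\Omega\to\C^n$ and a finite set $F\subset\Omega\setminus K$ with $\sup_K\|\tilde\phi-\phi\|<\varepsilon$ and $\mathrm{dist}\bigl(v,\Span\{\gamma_{\tilde\phi}(\lambda):\lambda\in F\}\bigr)<\delta$.

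To prove the One‑step Lemma: since $\Omega\setminus K$ is a nonempty open set and $\Omega$ is connected, the identity theorem applied to the holomorphic functions $\lambda\mapsto\langle\gamma_i(\lambda),f\rangle$ gives $\overline{\Span}\{\gamma_i(\lambda):\lambda\in\Omega\setminus K,\ i\}=\overline{\Span}\{\gamma_i(\lambda):\lambda\in\Omega,\ i\}=\HH$; hence there exist a finite $F\subset\Omega\setminus K$ and scalars $c_{\lambda,i}$ with $\bigl\|v-\sum_{\lambda\in F}\sum_i c_{\lambda,i}\gamma_i(\lambda)\bigr\|<\delta$. It then suffices to find $\tilde\phi\in\mathcal{O}(\Omega,\C^n)$ with $\tilde\phi_i(\lambda)=c_{\lambda,i}$ for every $\lambda\in F$ (so $\gamma_{\tilde\phi}(\lambda)=\sum_i c_{\lambda,i}\gamma_i(\lambda)$ on $F$) and $\sup_K\|\tilde\phi-\phi\|<\varepsilon$; writing $\tilde\phi=\phi+h$, this is the interpolation problem of producing $h\in\mathcal{O}(\Omega,\C^n)$ with prescribed values on the finite set $F$ (disjoint from $K$) and $\sup_K\|h\|$ as small as desired. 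Choosing the compact exhaustion of $\Omega$ so that each member is $\mathcal{O}(\Omega)$‑convex, this is a standard Runge‑type fact: for each $\mu\in F$ build a scalar $p_\mu\in\mathcal{O}(\Omega)$ vanishing at the other points of $F$, with $p_\mu(\mu)=1$ and $\sup_K|p_\mu|$ arbitrarily small, by uniformly approximating on the disjoint compact $K\cup F$ the function that is $0$ near $K$ and takes the required values near $F$ by a function holomorphic on all of $\Omega$; then set $h=\sum_{\mu\in F}\bigl(c_\mu-\phi(\mu)\bigr)p_\mu$.

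With the lemma available, fix an orthonormal basis $\{e_k\}$ of $\HH$ and an increasing $\mathcal{O}(\Omega)$‑convex exhaustion $K_1\subset K_2\subset\cdots$ with $\bigcup_k K_k=\Omega$. Starting from $\phi^{(0)}=\phi_0$ (the given data, or $0$), inductively apply the lemma, at stage $k$ with compact $K_k\cup F_1\cup\cdots\cup F_{k-1}$ and target $e_k$, to obtain $\phi^{(k)}$ and a finite $F_k$ with $\mathrm{dist}\bigl(e_k,\Span\{\gamma_{\phi^{(k)}}(\lambda):\lambda\in F_k\}\bigr)<1/k$ and $\sup_{K_k\cup F_1\cup\cdots\cup F_{k-1}}\|\phi^{(k)}-\phi^{(k-1)}\|<\varepsilon_k$, where $\varepsilon_k>0$ is chosen small enough (in terms of $K_k$, the numbers $\sup_{K_k}\|\gamma_i\|$, and the finitely many interpolation coefficients produced at stages $\le k$) that $\sum_k(\phi^{(k)}-\phi^{(k-1)})$ converges uniformly on compacta — so $\phi^{(k)}\to\phi\in\mathcal{O}(\Omega,\C^n)$ — and moreover $\|\gamma_\phi(\lambda)-\gamma_{\phi^{(k)}}(\lambda)\|$ is negligible on $F_k$ compared with $1/k$ and with those coefficients. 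Then for each $k$ one gets $\mathrm{dist}\bigl(e_k,\Span\{\gamma_\phi(\lambda):\lambda\in F_k\}\bigr)<2/k$, so $e_k\in\overline{\Span}\{\gamma_\phi(\lambda):\lambda\in\Omega\}$ for every $k$, whence $\gamma:=\gamma_\phi=\sum_i\phi_i\gamma_i$ spans $\HH$, which is the conclusion.

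The main obstacle is the bookkeeping of the limit: one must coordinate the rates $\varepsilon_k\to 0$ with the a priori unbounded growth of $\sup_{K_k}\|\gamma_i\|$ and with the stage‑dependent interpolation coefficients, so that $\phi$ exists and does not spoil the finitely many already‑captured vectors $e_1,\dots,e_{k-1}$ at stage $k$ — this is routine but must be set up carefully — together with verifying the Runge interpolation (prescribed finite values, uniformly small on a given compact, holomorphic on $\Omega$) underlying the One‑step Lemma, which is where the hypothesis $\overline{\Span}\{\gamma_i(\lambda)\}=\HH$ is actually consumed. (An alternative approach — expanding $\gamma_i$ and $\phi_i$ in Taylor series at a fixed $\lambda_0\in\Omega$, reducing via the identity theorem to making the span of $\{\gamma^{(m)}(\lambda_0):m\ge0\}$ equal $\HH$, and choosing the Taylor coefficients of the $\phi_i$ recursively to exhaust the filtration $\Span\{\gamma_i^{(j)}(\lambda_0):i,\ j\le m\}$ — leads to an analogous but more delicate recursion precisely when $n=\dim\ker(T^{*}-\lambda)\ge 2$, since then several independent new directions can enter at a single order and only the leading term of a single $\gamma^{(m)}(\lambda_0)$ reaches one of them.)
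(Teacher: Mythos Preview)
Your argument is essentially correct, but it takes a genuinely different route from the one outlined in the paper (following Zhu). The paper's construction is a \emph{finite} induction on the index $i\in\{1,\dots,n\}$: one sets $\phi_1\equiv 1$, and at stage $i$ one already has $\gamma_1+\phi_2\gamma_2+\cdots+\phi_i\gamma_i$ spanning $\HH_i:=\bigvee_{\lambda}\Span_{j\le i}\{\gamma_j(\lambda)\}$; one then chooses a countable \emph{uniqueness set} $\{a_l\}\subset\Omega$ for the reproducing kernel Hilbert space attached to this partial section, invokes the Weierstrass factorization theorem to produce $\phi_{i+1}\in\mathcal{O}(\Omega)$ with zero set exactly $\{a_l\}$, and checks that $\gamma_1+\cdots+\phi_{i+1}\gamma_{i+1}$ spans $\HH_{i+1}$. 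By contrast, you run an \emph{infinite} successive-approximation scheme in the Fr\'echet space $\mathcal{O}(\Omega,\C^n)$, perturbing all $n$ coefficient functions simultaneously at each stage via Runge-type interpolation so as to capture one more vector of a fixed orthonormal basis of $\HH$.

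Each approach buys something different. Zhu's method terminates in $n$ steps with no limiting process and no $\varepsilon$-bookkeeping, and it delivers the explicit normalization $\phi_1\equiv 1$ that the paper actually exploits downstream (Definition~\ref{rk_model} and Equation~\eqref{function basis}, where one needs $\widehat{x_{1,l}}=z^l$). Your method avoids Weierstrass products and uniqueness sets entirely, replacing them with a clean Runge interpolation lemma; it is arguably more robust, but the convergence argument is delicate in exactly the way you flag. One small wording slip: the $\varepsilon_k$ governing $\|\phi^{(k)}-\phi^{(k-1)}\|$ must be fixed \emph{before} $F_k$ and the stage-$k$ data are produced, so it cannot literally depend on ``stages $\le k$''; what you really need is that the tail $\sum_{j>k}\varepsilon_j$ be chosen small relative to $|F_k|$ and $\max_{\lambda\in F_k,\,i}\|\gamma_i(\lambda)\|$, which is fine since those quantities are known when $\varepsilon_{k+1}$ is selected. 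With that reordering made explicit, your proof goes through.
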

The functions $\phi_i$ that appear in Theorem \ref{zhu theorem} are built as follows.  Let $\HH_1 = \bigvee_{\lambda \in \Omega} \gamma_1(\lambda)$.  Then by construction, $\gamma_1$ is a holomorphic spanning cross-section for $\HH_1$. Consider the RKHS of analytic functions built from $\gamma_1$.  One can find a set of points $\{a_l\} \subset \Omega$ that is a \textit{uniqueness set of $\Omega$}, in the sense that the only function in this space associated to $\gamma_1$ that vanishes on $\{a_l\}$ is the zero function. Using a separation theorem due to Weierstrass,  one can pick a holomorphic function $\phi_2$ that vanishes exactly on $\{a_l\}$. Then $\gamma_1 + \phi_2 \gamma_2$ ends up being a spanning section for the space $\HH_2 = \bigvee_{\lambda \in \Omega} \Span_{i=1,2}\{\gamma_i(\lambda)\}$.  Iteratively, one selects holomorphic functions $\phi_i$ until a spanning section for the whole Hilbert space is built. In particular, one can choose $\phi_1$ to be the identity function on $\Omega$.  For details, see \cite{Zhu}.

Notice that this construction is far from unique.  Indeed, $\gamma$ depends on a choice of uniqueness sets and functions $\{\phi_i\}$. Nevertheless, Theorem \ref{zhu theorem} provides a method for constructing spanning sections for all $R \in B_n(\Omega)$.

\begin{cor}
	\label{spanning}
	If $R \in B_n(\Omega)$, then $(E_R, \pi)$ admits a spanning holomorphic cross-section.
\end{cor}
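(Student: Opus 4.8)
The plan is to reduce the statement to a single application of Theorem \ref{zhu theorem}. By that theorem, it suffices to produce holomorphic maps $\gamma_1,\dots,\gamma_n:\Omega\to\HH$ with $\gamma_i(\lambda)\in\ker(R-\lambda)$ for every $\lambda\in\Omega$ and with $\bigvee_{\lambda\in\Omega}\Span_{i=1,\dots,n}\{\gamma_i(\lambda)\}=\HH$. Granting this, the section $\gamma=\sum_{i=1}^n\phi_i\gamma_i$ produced by Theorem \ref{zhu theorem} is holomorphic, spans $\HH$, and at each $\lambda$ is a linear combination of vectors of $\ker(R-\lambda)=E_{R,\lambda}$, hence lies in the fiber $E_{R,\lambda}$; thus $\gamma$ is a spanning holomorphic cross-section of $(E_R,\pi)$.

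First I would invoke the bundle structure: since $R\in B_n(\Omega)$, $(E_R,\pi)$ is a rank-$n$ Hermitian holomorphic vector bundle over the planar domain $\Omega$, and such a bundle is holomorphically trivial — this is classical for open subsets of $\C$, and is completely elementary in the case of interest here, where $\Omega$ is a disk; it is also recorded in \cite{Cowen}. Triviality supplies global holomorphic sections $\gamma_1,\dots,\gamma_n:\Omega\to E_R$ such that $\{\gamma_i(\lambda)\}_{i=1}^n$ is a basis of $\ker(R-\lambda)$ for every $\lambda$. (Alternatively, for the operators arising in Section 3.1 one may take the explicit sections of Definition \ref{section}, which by Corollary \ref{span ker} already form such a frame at every $\lambda\in\Omega_T$.) Then $\Span_{i=1,\dots,n}\{\gamma_i(\lambda)\}=\ker(R-\lambda)$ for each $\lambda$, so condition (iv) of Definition \ref{CDdef} gives $\bigvee_{\lambda\in\Omega}\Span_{i=1,\dots,n}\{\gamma_i(\lambda)\}=\bigvee_{\lambda\in\Omega}\ker(R-\lambda)=\HH$, which is exactly the hypothesis of Theorem \ref{zhu theorem}. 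Applying that theorem and the remarks of the first paragraph completes the argument.

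The only substantive ingredient is the existence of a global holomorphic frame for $E_R$; everything after that is a formal consequence of Theorem \ref{zhu theorem}. I expect the only point requiring care is stating the triviality at the level of generality of the hypothesis (an arbitrary open $\Omega\subseteq\C$), and I would note in passing that in every application in this paper $\Omega$ is a disk, where no appeal to general triviality theorems is needed.
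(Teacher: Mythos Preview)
Your proposal is correct and follows essentially the same route as the paper: produce a global holomorphic frame $\{\gamma_i\}_{i=1}^n$ for $E_R$, observe that condition (iv) of Definition \ref{CDdef} gives $\bigvee_{\lambda}\Span_i\{\gamma_i(\lambda)\}=\HH$, and then invoke Theorem \ref{zhu theorem}. The paper leaves the existence of a global frame implicit (it only exhibits one explicitly in the case $R=T^*$ via Definition \ref{section}), whereas you are more careful to justify it via holomorphic triviality over planar domains; this is a welcome clarification but not a different argument.
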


Suppose $R \in B_n(\Omega)$.   A  consequence of Corollary \ref{spanning} is that $R$ is unitarily equivalent to multiplication by $z$ on a collection of analytic functions over $\Omega^*$.

Let $M_z$ denote the operator of multiplication by the indeterminate $z$.  That is, for each $\lambda \in \Omega^*$, $M_z(\hat{f}_\gamma)(\lambda) = \lambda \hat{f}_\gamma(\lambda)$.  Since $\overline{\lambda} \in \Omega$, it follows from the definition Cowen-Douglas operators that $\overline{\lambda}$ is an eigenvalue for $R$. Consequently,  $U_\gamma$ intertwines $M_z$ on $\widehat{\HH}_\gamma$ and $R^*$ on $\HH$.  Indeed for all $f \in \HH$,

\begin{equation}
\label{multiplication}
\begin{array}{rcl}
(U_\gamma R^* f)(\lambda) = \widehat{(R^*f)}_\gamma(\lambda) &=& \langle R^*f, \gamma(\overline{\lambda}) \rangle \\
&=& \langle f, R \gamma(\overline{\lambda}) \rangle\\
& =& \langle f, \overline{\lambda} \gamma(\overline{\lambda}) \rangle \\
&=& (M_z U_\gamma f)(\lambda).
\end{array}
\end{equation}
Thus, we have $U_\gamma R^* = M_z U_\gamma$, so $R^*$ is unitarily equivalent to $M_z$ on $\widehat{\HH}_\gamma$. 

In our current study of natural analytic left invertible operators, Theorem \ref{CD} says that $T^* \in B_n(\Omega_T)$.  Therefore, Equation (\ref{multiplication}) tells us that $T$ is unitarily equivalent to $M_z$ on $\widehat{\HH}_\gamma$. Furthermore, $\Omega_T = \Omega_T^*$ as sets, so for ease of notation, we consider the functions in $\widehat{\HH}_\gamma$ on $\Omega_T$.  We record this as a corollary.

\begin{cor}
	Let $T$ be an analytic, left invertible operator with $\mbox{\ind}(T) = -n$ for some positive integer $n$.  Then $T$ is unitarily equivalent to multiplication by $z$ on a reproducing kernel Hilbert space of analytic functions on $\Omega_T^* = \Omega_T$.  
\end{cor}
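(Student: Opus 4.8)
The plan is to assemble the corollary directly from Theorem \ref{CD} together with the reproducing kernel construction developed in this subsection; no new ideas are needed. First I would observe that the hypotheses — $T$ analytic and left invertible with $\mbox{\ind}(T) = -n$ for a positive integer $n$ — force $0 < \dim(\ker(T^*)) = n < \infty$, so $T$ is a \emph{natural} analytic left invertible and the disc $\Omega_T = \{z : |z| < \|T^\dagger\|^{-1}\}$ of Definition \ref{omega} is defined. Then by Theorem \ref{CD} (more precisely, by Proposition \ref{equiv}, which pins down the explicit disc) we have $T^* \in B_n(\Omega_T)$.

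Next I would apply Corollary \ref{spanning} to $R := T^*$ to obtain a spanning holomorphic cross-section $\gamma : \Omega_T \to E_{T^*}$; concretely one may take $\gamma = \sum_{i=1}^n \phi_i \gamma_i$ for the sections $\gamma_i$ of Definition \ref{section} and suitable scalar holomorphic functions $\phi_i$ furnished by Theorem \ref{zhu theorem}. With $\gamma$ fixed, form the space $\widehat{\HH}_\gamma \subset H(\Omega_T^*)$ of analytic functions $\hat{f}_\gamma(\lambda) = \langle f, \gamma(\overline{\lambda}) \rangle$ equipped with the transported inner product $\langle \hat{f}_\gamma, \hat{g}_\gamma \rangle_\gamma = \langle f, g \rangle$, so that $U_\gamma : \HH \to \widehat{\HH}_\gamma$, $U_\gamma f = \hat{f}_\gamma$, is unitary precisely because $\gamma$ spans $\HH$; and $\widehat{\HH}_\gamma$ is a reproducing kernel Hilbert space with kernel $K(\lambda,\mu) = \langle \gamma(\overline{\mu}), \gamma(\overline{\lambda}) \rangle$, exactly as computed above.

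Finally I would invoke the intertwining identity of Equation (\ref{multiplication}), applied with $R = T^*$: it yields $U_\gamma (T^*)^* = M_z U_\gamma$ on $\widehat{\HH}_\gamma$, and since $(T^*)^* = T$ this says $U_\gamma T U_\gamma^{-1} = M_z$, i.e. $T$ is unitarily equivalent to multiplication by $z$ on $\widehat{\HH}_\gamma$. Since $\Omega_T$ is a disc centered at the origin we have $\Omega_T^* = \Omega_T$ as sets, so this RKHS may be regarded as living over $\Omega_T$ itself, completing the identification.

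I do not anticipate a genuine obstacle: the corollary is a bookkeeping consequence of machinery already in place. The only points requiring care are (a) that a spanning holomorphic cross-section actually exists — this is Corollary \ref{spanning}, resting on Theorem \ref{zhu theorem} of Zhu — and (b) keeping the adjoints straight, since the Cowen–Douglas hypothesis lives on $T^*$ while the multiplication model is for $(T^*)^* = T$. Everything else is the general Cowen–Douglas RKHS model specialized to the disc $\Omega_T$.
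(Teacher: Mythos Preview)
Your proposal is correct and follows essentially the same route as the paper: the corollary is recorded immediately after Equation~(\ref{multiplication}), and the paper's justification is precisely to combine Theorem~\ref{CD} (giving $T^* \in B_n(\Omega_T)$), the existence of a spanning holomorphic cross-section (Corollary~\ref{spanning}), and the intertwining computation of Equation~(\ref{multiplication}) applied with $R = T^*$, together with the observation that $\Omega_T^* = \Omega_T$. Your write-up simply makes each of these steps explicit.
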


A natural question one might ask is, ``What are the analytic functions in $\widehat{\HH}_\gamma$ ''?  The answer will depend on the choice of analytic section $\gamma$ described above.  We will describe a salient representation $U_\gamma$ that blends together the Cowen-Douglas theory with the basis theory developed in Section Two.  

Let $\{x_{i,0}\}_{i=1}^n$ be an orthonormal basis for $\ker(T^*)$, and $\{\gamma_i\}_{i=1}^n$ be defined as in Definition \ref{section}.  By Corollary \ref{span ker} and Theorem \ref{zhu theorem}, there exists holomorphic functions $\{\phi_i\}_{i=1}^n$ from $\Omega \rightarrow \C$ such that 
\[
\gamma(\lambda):= \sum_{i=1}^n \phi_i(\lambda) \gamma_i(\lambda) = \sum_{i=1}^n \phi_i(\lambda) \sum_{j \geq 0} \lambda^j x_{i,j}'
\]
is a holomorphic spanning cross-section for $\HH$. By the comments following Theorem \ref{zhu theorem}, $\phi_1$ may be chosen to be the identity function.  For each $f \in \HH$ and $\lambda \in \Omega_T$, we have by Equation (\ref{anal formula})
\[
\hat{f}(\lambda) = \langle f , \gamma(\overline{\lambda}) \rangle =  \sum_{i=1}^n \phi_i(\lambda) \sum_{j \geq 0} \lambda^j \langle f, x_{i,j}'\rangle
\]
where here we have suppressed the subscript $\gamma$ on $\hat{f}$.  The reproducing kernel Hilbert space associated with this choice of analytic section will be simply denoted $\widehat{\HH}$.  We store this information in a definition:

\begin{defn}
	\label{rk_model}
	Given a natural analytic left invertible $T$, let $\Omega_T$ be as in  Definition \ref{omega}.    Let $\{x_{i,0}\}_{i=1}^n$ be an orthonormal basis for $\ker(T^*)$.  Pick $\{\phi_i\}_{i=1}^n$ holomorphic functions such that the map
\[
\gamma(\lambda)=\sum_{i=1}^n \phi_i(\lambda) \sum_{j \geq 0} \lambda^j x_{i,j}'
\]
each $\lambda \in \Omega_T$ is a spanning holomorphic cross-section with $\phi_1 = 1$.  For each  each $f \in \HH$, set
	\begin{equation}
	\label{usual}
	\hat{f}(\lambda) = \sum_{i=1}^n \phi_i(\lambda) \sum_{j \geq 0} \lambda^j \langle f, x_{i,j}'\rangle.
	\end{equation}
	Let $\widehat{\HH}$ denote the reproducing kernel Hilbert space of functions $\hat{f}$ arising from Equation (\ref{usual}) with inner product $\langle \hat{f}, \hat{g} \rangle = \langle f, g \rangle$. The representation of $T$ as $M_z$ on $\widehat{\HH}$ is called the \textbf{canonical representation of $T$ relative to $\{x_{i,0}\}_{i=1}^n$ and $\{\phi_i\}_{i=1}^n$}.
\end{defn}

The terminology canonical is fitting for the above representation. In the canonical representation,  the basis elements associated to $T$ become the functions $\phi_k z^l$.  That is, if $k = 1, \dots n$, then $\widehat{x_{k,l}}(\lambda) = \phi_k(\lambda) \lambda^l$ for each $\lambda \in \Omega$.   This follows directly by Corollary \ref{bi ortho} and Equation \eqref{usual}:
\begin{equation}
	\label{function basis}
\widehat{x_{k,l}}(\lambda) =  \sum_{i=1}^n \phi_i(\lambda) \sum_{j \geq 0} \lambda^j \langle x_{k,l}, x_{i,j}'\rangle =\phi_k(\lambda) \lambda^l
\end{equation}
In particular, since $\phi_1 = 1$, we have that $\widehat{\HH}$ contains the functions of the form $z^l$.  Furthermore, $\widehat{x_{k,0}} = \phi_k \in \widehat{\HH}$ for each $k=1, \dots, n$.  Since $\{x_{k,0}\}_{k=1}^n$ form an orthonormal basis for $\ker(T^*)$, the functions $\{\phi_k\}_{k=1}^n$ are also orthogonal.

Recall that in general, the reproducing kernel at $\lambda$ is given by $k_\lambda = \gamma(\overline{\lambda})$. Hence, for the canonical representation, the reproducing kernel $K:\Omega^2 \rightarrow \C$ for $\widehat{\HH}$ takes on the following form:
\[
K(\lambda, \mu) = 
\langle \gamma(\overline{\mu}), \gamma(\overline{\lambda}) \rangle = 
 \sum_{k=1}^n \sum_{i=1}^n \phi_i(\lambda) \overline{\phi_k(\mu)}\sum_{l \geq 0} \sum_{j \geq 0} \overline{\mu}^l \lambda^j  \langle x_{k,l}', x_{i,j}' \rangle
\]
where by Proposition \ref{order}, convergence does not depend on the order of the four sums. The kernel is analytic in $\lambda$, and co-analytic in $\mu$ by construction.

Under the canonical  representation, $T^\dagger$ becomes ``division by $z$''.  To make this precise, we require a simple lemma:

\begin{lemma}
	Let $T_1$ and $T_2$ be left invertible operators with Moore-Penrose inverses $T_1^\dagger$ and ${T_2}^\dagger$.  If $T_2 = U T_1 U^*$ for some unitary $U$, then ${T_2}^\dagger = U T_1^\dagger U^* = (U T_1 U^*)^\dagger$.
\end{lemma}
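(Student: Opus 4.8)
The plan is to verify directly that $U T_1^\dagger U^*$ satisfies the two defining properties of the Moore-Penrose inverse from Proposition \ref{MPI}, namely that it is a bounded operator killing $\ker(T_2^*)$ and acting as a left inverse on $\ker(T_2)^\perp$; uniqueness in Proposition \ref{MPI} then forces it to equal $T_2^\dagger$. First I would record the immediate observations that conjugation by a unitary preserves all the relevant structure: since $T_2 = U T_1 U^*$, we have $T_2^* = U T_1^* U^*$, $\ran(T_2) = U \ran(T_1)$, $\ker(T_2) = U \ker(T_1)$, $\ker(T_2^*) = U \ker(T_1^*)$, and $\ker(T_2)^\perp = U \ker(T_1)^\perp$ (using that $U$ is an isometric bijection). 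In particular $\ran(T_2)$ is closed because $\ran(T_1)$ is, so $T_2$ genuinely has a Moore-Penrose inverse.

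Next I would check the two properties. For property \textit{i.}: $\ker(U T_1^\dagger U^*) = U \ker(T_1^\dagger) = U \ker(T_1^*) = \ker(T_2^*) = \ran(T_2)^\perp$, where the second equality uses property \textit{i.} of $T_1^\dagger$. For property \textit{ii.}: take $x \in \ker(T_2)^\perp = U \ker(T_1)^\perp$, so $x = Uy$ with $y \in \ker(T_1)^\perp$; then
\[
(U T_1^\dagger U^*) T_2 x = U T_1^\dagger U^* (U T_1 U^*) (Uy) = U T_1^\dagger T_1 y = U y = x,
\]
using $U^* U = I$ and property \textit{ii.} of $T_1^\dagger$. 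Since $U T_1^\dagger U^* \in \BH$ and satisfies both \textit{i.} and \textit{ii.}, the uniqueness clause of Proposition \ref{MPI} gives $T_2^\dagger = U T_1^\dagger U^*$. Finally, $(U T_1 U^*)^\dagger$ is by definition $T_2^\dagger$, giving the last equality for free.

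I do not anticipate a genuine obstacle here — this is a routine unitary-invariance argument. The only minor point worth stating carefully is that $U$ being unitary (hence a surjective isometry) is exactly what makes the subspace identities above hold as equalities rather than inclusions, and that it guarantees $\ran(T_2)$ is closed so that Proposition \ref{MPI} applies in the first place. One could alternatively prove the result by composing with the explicit formula $T_2^\dagger = (T_2^* T_2)^{-1} T_2^*$ of Proposition \ref{dagger_form} in the left-invertible case, but the uniqueness argument is cleaner and covers the general closed-range setting without extra hypotheses.
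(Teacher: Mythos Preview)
Your proof is correct. The paper takes the alternative route you mention at the end: it simply substitutes $T_2 = U T_1 U^*$ into the explicit formula $T_2^\dagger = (T_2^* T_2)^{-1} T_2^*$ from Proposition \ref{dagger_form} and pushes the unitaries through, giving a one-line computation. Your approach via the uniqueness clause of Proposition \ref{MPI} is slightly longer but has the advantage of working verbatim for arbitrary closed-range operators, not just left invertibles, since it never invokes the invertibility of $T^* T$. Both are entirely routine; the paper's version is just the more compressed of the two.
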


\begin{proof}
	Recall that ${T_2}^\dagger = ({T_2}^* T_2)^{-1} {T_2}^*$.  Hence,
	\[
	{T_2}^\dagger = (U T_1^* T_1 U^*)^{-1} U T_1^* U^* = U (T_1^* T_1)^{-1} U^* U T_1^* U^* = U T_1^\dagger U^*.
	\]
\end{proof}

\begin{cor}
	If $T$ is analytic with index $-n$, and $U_\gamma:\HH \rightarrow \widehat{\HH}_\gamma$ is the unitary such that $M_z = U_\gamma T U_\gamma^*$, then $M_z^\dagger = (U_\gamma T U_\gamma^*)^\dagger$.
\end{cor}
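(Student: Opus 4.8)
The plan is to apply the previous lemma directly with $T_1 = T$ and $T_2 = M_z$, using the unitary $U = U_\gamma$. First I would note that Theorem \ref{CD} (together with the discussion following Equation (\ref{multiplication})) establishes that $T$ is unitarily equivalent to $M_z$ on $\widehat{\HH}_\gamma$ via the unitary $U_\gamma$; explicitly, $M_z = U_\gamma T U_\gamma^*$. Since $T$ is an analytic left invertible with index $-n$, it has a Moore-Penrose inverse $T^\dagger$, and unitary equivalence preserves left invertibility, so $M_z$ is left invertible as an operator on $\widehat{\HH}_\gamma$ and its Moore-Penrose inverse $M_z^\dagger$ is defined.

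The core step is then immediate from the preceding lemma: taking $T_2 = M_z$, $T_1 = T$, and the unitary $U = U_\gamma$, the lemma yields
\[
M_z^\dagger = U_\gamma T^\dagger U_\gamma^* = (U_\gamma T U_\gamma^*)^\dagger.
\]
So the proof is essentially a one-line invocation. I would also remark, for the reader's benefit, that the conclusion says $M_z^\dagger$ acts on $\widehat{\HH}_\gamma$ as the conjugate of $T^\dagger$ by $U_\gamma$, which is the precise sense in which ``$T^\dagger$ becomes division by $z$'' under the canonical representation — composing with the action of $T^\dagger = (T^*T)^{-1}T^*$ on basis elements $x_{i,j}$ and transporting through $U_\gamma$ shows $M_z^\dagger$ sends $\phi_k z^l$ to $\phi_k z^{l-1}$ for $l \geq 1$ and annihilates $\phi_k$ (using Equation (\ref{function basis})).

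There is no real obstacle here; the only thing to be careful about is bookkeeping about \emph{which} Hilbert space each operator acts on. The lemma is stated for operators $T_1, T_2 \in \BH$ on a fixed $\HH$, whereas $M_z$ lives on $\widehat{\HH}_\gamma$ and $T$ on $\HH$; since $U_\gamma : \HH \to \widehat{\HH}_\gamma$ is a genuine unitary between (possibly nominally different) Hilbert spaces, the lemma's proof — which only uses ${T_2}^\dagger = ({T_2}^*T_2)^{-1}{T_2}^*$ and $U^*U = I$ — goes through verbatim in this slightly more general setting, so I would just invoke it without reproving. Hence the corollary follows immediately.
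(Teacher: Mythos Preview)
Your proposal is correct and matches the paper's approach exactly: the corollary is stated without proof in the paper, as it is an immediate application of the preceding lemma with $T_1 = T$, $T_2 = M_z$, and $U = U_\gamma$. Your observation about the two Hilbert spaces being nominally different is a fair point of care, but as you note the lemma's computation goes through verbatim, so nothing further is needed.
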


Now, the functions inside $\ker(M_z^\dagger)$ are the span of the orthogonal functions $\{\phi_i\}_{i=1}^n$. Furthermore, $\mbox{ran}(M_z) = \ker(M_z^\dagger)^\perp$ consists of functions of the form $z \hat{g}$.  From the preceding corollary, $M_z^\dagger M_z = I$, so it follows that either $M_z^\dagger \hat{f} = 0$ (if $\hat{f}$ is linear combination of the $\phi_i$) or $M_z^\dagger \hat{f} = z^{-1} \hat{f}$ otherwise. Expanding on this computation, suppose that $\hat{f} \in \widehat{\HH}$ is of the form $\phi_i z^j$. Consider the action of ${M_z^\dagger}^n$ on $\hat{f}$.   By construction, ${M_z^\dagger}^n(\phi_i z^j)(\lambda)$ is equal to $0$ if $n \geq j$ and $\phi_i z^{j-n}$ otherwise.

For emphasis, the operator $M_{z^{-1}}$ of division by $z$ is not well defined on $\widehat{\HH}$ since $0 \in \Omega$ and $\widehat{\HH}$ contains the constant functions.  Yet $M_{z^{-1}}$ is well defined as a map from  $\mbox{ran}(M_z) = \ker(M_z^\dagger)^\perp$ to $\widehat{\HH}$.  By the above computation, $M_z^\dagger$ is $M_{z^{-1}}$ on $\ker(M_z^\dagger)^\perp$.  Hence, $T^\dagger$ is $M_{z^{-1}}$  wherever the operator $M_{z^{-1}}$ is well defined, and $0$ otherwise.  This can be succinctly written as
\[
M_z^\dagger = M_{z^{-1}} Q_1
\]
where $Q_1$ is the projection onto $\ker(M_z^\dagger)^\perp$.   More generally for each $n$, we have that 
\[
{M_z^\dagger}^n = M_{z^{-n}} Q_n
\]
where $Q_n$ is the projection onto $\ker({M_z^\dagger}^n)^\perp$.

This model gives intuition into the structure of $\A_T$.   By Proposition \ref{alg simple},  $\mbox{\Alg}(M_z, M_z^\dagger)$ consists of operators of the form
\[
F + \sum_{k=0}^N a_k {M_z}^k + \sum_{l=1}^M b_l {M_z^\dagger}^l = F + \sum_{k=0}^N a_k M_{z^k} + \sum_{l=1}^M b_l M_{z^{-l}} Q_l
\]
where $F$ is a finite rank operator.  One could combine via linearity the ``analytic'' component of the above sum to get  
\[
F + M_{\sum_{k=0}^N a_k {z^k}} + \sum_{l=1}^M b_l M_{z^{-l}} Q_l.
\]
In some sense, the ``principal part'' $\sum_{l=1}^M b_l M_{z^{-l}} Q_l$ may also be combined into a single multiplication operator.  Unfortunately, this is not done as effortlessly. We do have that $Q_l \leq Q_k$ for all $k \leq l$.  Therefore, for all $\hat{f} \in \ker({T^\dagger}^M)^\perp$, the sum of the principal pieces combine into a single multiplication operator.  That is, 
\[
\left(\sum_{l=1}^M b_l M_{z^{-l}} Q_l \right) (\hat{f})(\lambda) = \sum_{l=1}^M b_l \frac{\hat{f}(\lambda)}{\lambda^l} = \left(M_{\sum_{l=1}^M b_l z^{-l}} \hat{f} \right)(\lambda)
\]
However, this fails on $\ker({T^\dagger}^M)$, as some operators in the principal part have kernels contained in $\ker({T^\dagger}^M)$. For example, if $\hat{f}$ is perpendicular to $\ker({T^\dagger}^L)$ but not perpendicular to $\ker({T^\dagger}^{L+1})$, then
\[
\left(\sum_{l=1}^M b_l M_{z^{-l}} Q_l \right) (\hat{f})(\lambda) = \sum_{l = 1}^L b_l \frac{\hat{f}(\lambda)}{\lambda^l} = \left(M_{\sum_{l=1}^L b_l z^{-l}} \hat{f} \right)(\lambda).
\]

This discussion demonstrates that we have a canonical analytic model to represent $\A_T$.   It is the norm limit of finite rank operators plus multiplication operators that have ``Laurent'' polynomials as symbols. 

\begin{heuristic}
	If $T$ is a natural analytic left invertible operator, then the algebra $\A_T$ is compact perturbations of multiplication operators whose symbols are Laurent series centered at zero.  
\end{heuristic}

In this section, we have shown that $T = M_z$ on a RKHS of analytic functions. To some extent, a converse statement is true as well. In \cite{Richter}, Richter shows if $T$ is $M_z$ on a reproducing kernel Hilbert space of analytic functions, then under suitable assumptions, $T$ is an analytic left invertible operator.  

%%%%%%%%%%%%%%%%%%%%%%%%%%%%%%%%%%%%%%%%%%%%%%%%%%%%%%%%%%
%%%%%%%%%%%%%%%%%%%%%%%%%%%%%%%%%%%%%%%%%%%%%%%%%%%%%%%%%%
%%%%%%%%%%%%%%%%%%%%%%%%%%%%%%%%%%%%%%%%%%%%%%%%%%%%%%%%%%
%%%%%%%%%%%%%%%%%%%%%%%%%%%%%%%%%%%%%%%%%%%%%%%%%%%%%%%%%%
\subsection{Reduction of Index - Strongly Irreducible Operators}
%%%%%%%%%%%%%%%%%%%%%%%%%%%%%%%%%%%%%%%%%%%%%%%%%%%%%%%%%%
%%%%%%%%%%%%%%%%%%%%%%%%%%%%%%%%%%%%%%%%%%%%%%%%%%%%%%%%%%
%%%%%%%%%%%%%%%%%%%%%%%%%%%%%%%%%%%%%%%%%%%%%%%%%%%%%%%%%%
%%%%%%%%%%%%%%%%%%%%%%%%%%%%%%%%%%%%%%%%%%%%%%%%%%%%%%%%%%

Suppose that  $T$ is an analytic (pure) isometry with Fredholm index $-n$ for $n \geq 2$.  Then $T$ can be decomposed as a direct sum of pure isometries $T_i$ each with Fredholm index -1.  This decomposition is clearly unique up to unitary equivalence. A similar, though much weaker, statement is true for general analytic left invertible operators. We require some terminology.

\begin{defn}[\cite{Jiang}]
	An operator $R \in \BH$ is \textbf{strongly irreducible} if there is no non-trivial idempotent in $\{R\}'$, the commutant of $R$. Equivalently, $R$ is strongly irreducible if $XRX^{-1}$ is an irreducible operator for every invertible operator $X$. We denote the set of all strongly irreducible operators over $\HH$ by $(SI)$. 
\end{defn}
Strong irreducibility is  a similarity invariant. Moreover, it follows by definition that $R \in (SI)$ if and only if $R^* \in (SI)$. Strongly irreducible operators play an important role in single operator theory.  They serve a role equivalent to the Jordan blocks in the infinite dimensional setting. 

In finite dimensions, the strongly irreducible operators are easily seen to be Jordan matrices. If $A \in M_n$, the Jordan canonical forms theorem states that $A$ is similar to a direct sum of Jordan blocks.  This decomposition is unique, up to the ordering of the blocks.  If $\sigma(A) = \{\lambda_i\}_{i=1}^n$, then we write
\[
A \sim \bigoplus_{i=1}^l J_{k_i} (\lambda_i)^{(m_i)}
\]
where the superscript $(m_i)$ denotes the orthogonal direct sum of $m_i$ copies of the Jordan block $J_{k_i} (\lambda_i)$.  In other words, the Jordan decomposition theorem states that, up to similarity, each matrix has a unique decomposition as a direct sum of strongly irreducible operators. 

The same statement translates into the infinite dimensional setting.  To help make this more precise, we have the following definition:

\begin{defn}[\cite{Jiang}]
	A sequence $\{E_j\}_{j=1}^l$, $1 \leq l \leq \infty$ of non-zero idempotents on $\HH$ is called a \textbf{spectral family} if 
	
	\begin{enumerate}[i.]
		\item there exists an invertible operator $X \in \BH$ such that $\{X E_j X^{-1}\}$ are pairwise orthogonal projections
		
		\item $\sum_{j=1}^l E_j = I$.
	\end{enumerate}
	\noindent Furthermore, if $R \in \BH$, then the spectral family is a \textbf{strongly irreducible decomposition of $R$} if
	
	\begin{enumerate}[i.]
		\setcounter{enumi}{2}
		\item $E_j R = R E_j$ for all $j$
		\item $R\mid{\mbox{ran}(E_j)} \in (SI)$. 
	\end{enumerate}
	
\end{defn}

In other words, $R$ has a strongly irreducible decomposition if $R$ is the topological direct sum strongly irreducible operators.  Equivalently, $R$ is similar to the orthogonal direct sum of strongly irreducible operators.  We denote this by $R \sim \oplus_{j=1}^l R_j$.

In finite dimensions,  Jordan canonical forms force each matrix to have a unique SI decomposition up to similarity. This is not the case for operators in $\BH$.  Not every operator in $\BH$ has a strongly irreducible decomposition. Moreover, even if an operator has a strongly irreducible decomposition, it may not be unique \cite{Jiang3}. Therefore, we make the following definition:

\begin{defn}
	Let $R \in \BH$, and $\mathcal{E} =  \{E_j\}_{j=1}^{l_1}$ and $\mathcal{E}' =  \{E_j'\}_{j=1}^{l_2}$ be two strongly irreducible decompositions of $R$.  We say  $\mathcal{E}$ and $\mathcal{E}'$ are \textbf{similar} if
	
	\begin{enumerate}[i.]
		\item $l_1 = l_2 = l$
		\item there exists an invertible operator $X \in \{R\}'$, the commutant of $R$, such that $X E_j X^{-1} = E_j'$ for all $1 \leq j \leq l$.
	\end{enumerate}
	
	\noindent	If $R$ has a strongly irreducible decomposition, we say that $R$ has a \textbf{unique strongly irreducible decomposition up to similarity} if any two of the decompositions are similar.
\end{defn}

There is an extensive amount of work relating strongly irreducible decompositions of operators to K-theory \cite{Cao}, \cite{Jiang}, \cite{Jiang2}, \cite{Jiang3}.  We will mention some of these results in in a later section.  Of particular interest to us in the present are the following deep results due to Y. Cao, J. Fang and C. Jiang:

\begin{thm}[\cite{Jiang3} - Theorem 5.5.12]
	\label{SI}
	Each operator in $S \in B_1(\Omega)$ is strongly irreducible. Moreover for any $n$, if $R \in B_n(\Omega)$, then $R$ has a unique SI decomposition up to similarity. Furthermore,  $R \sim \oplus_{j=1}^m R_j$ where $R_j \in (SI) \cap B_{n_j}(\Omega)$ and $\sum_{j=1}^m n_j = n$.
	
\end{thm}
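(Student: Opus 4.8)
The plan is to treat the three assertions separately: strong irreducibility in the rank-one case is a short bundle argument, existence of a strongly irreducible decomposition is an induction on the bundle rank, and uniqueness up to similarity is the deep part that must be imported from \cite{Jiang3}.

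For the first assertion, fix $S\in B_1(\Omega)$ and a holomorphic eigenvector field $\gamma\colon\Omega\to\HH$ with $\overline{\mathrm{span}}\{\gamma(\lambda)\}=\HH$, available by the Cowen-Douglas theory (Corollary \ref{spanning}). If $E\in\BH$ is an idempotent commuting with $S$, then $E$ carries each one-dimensional space $\ker(S-\lambda)$ into itself, so $E\gamma(\lambda)=c(\lambda)\gamma(\lambda)$ for a scalar $c(\lambda)$. Writing $c(\lambda)=\langle E\gamma(\lambda),v\rangle/\langle\gamma(\lambda),v\rangle$ locally, with $v$ chosen so the denominator does not vanish, shows $c$ is holomorphic on $\Omega$, while $E^{2}=E$ forces $c(\lambda)^{2}=c(\lambda)$, hence $c(\Omega)\subseteq\{0,1\}$. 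A holomorphic function on the connected set $\Omega$ valued in $\{0,1\}$ is constant, so $E$ annihilates, or fixes, every $\gamma(\lambda)$, and since these span a dense subspace $E\in\{0,I\}$. Thus $S$ is strongly irreducible.

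For existence of a strongly irreducible decomposition, induct on $n$, the rank of $E_R$. If $R\in B_n(\Omega)$ is strongly irreducible we are done, so suppose $E\in\{R\}'$ is a nontrivial idempotent and put $M=\ran E$, $M'=\ker E$; both are $R$-invariant and $\HH=M\oplus M'$ as a topological direct sum. One verifies directly that $R|_M\in B_{n_1}(\Omega)$: surjectivity of $R-\lambda$ on $M$ follows from surjectivity on $\HH$ together with the $R$-invariance of both $M$ and $M'$; the eigenspace $\ker(R|_M-\lambda)=\ker(R-\lambda)\cap M$ is the range of $E$ restricted to the holomorphically varying $n$-dimensional space $\ker(R-\lambda)$, so its dimension is the trace of a continuous finite-rank idempotent family and hence a constant $n_1$ with $1\le n_1<n$; and boundedness of $E$ transports $\bigvee_\lambda\ker(R-\lambda)=\HH$ to $\bigvee_\lambda\ker(R|_M-\lambda)=M$. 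Symmetrically $R|_{M'}\in B_{n_2}(\Omega)$ with $n_2=n-n_1$. Since every idempotent is similar to an orthogonal projection and all four conditions in Definition \ref{CDdef} are similarity invariants, $R$ is similar to an orthogonal direct sum $A\oplus B$ with $A\in B_{n_1}(\Omega)$ and $B\in B_{n_2}(\Omega)$. Applying the inductive hypothesis to $A$ and $B$ and composing the corresponding spectral families yields $R\sim\bigoplus_{j=1}^{m}R_j$ with $R_j\in(SI)\cap B_{n_j}(\Omega)$ and $\sum_j n_j=n$; the recursion terminates because ranks strictly decrease and rank-one Cowen-Douglas operators are strongly irreducible by the first part.

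The main obstacle is uniqueness up to similarity, which genuinely requires the $K$-theoretic analysis of the commutant developed in \cite{Jiang}, \cite{Jiang3}; I do not see a way around invoking it. The plan there would be: grouping mutually similar blocks, write $R\sim\bigoplus_{i=1}^{s}R_i^{(m_i)}$ with the $R_i$ strongly irreducible and pairwise non-similar, and prove that $\mathcal{A}(R)/\mathrm{rad}\,\mathcal{A}(R)\cong\bigoplus_{i=1}^{s}M_{m_i}(\C)$ --- the Cowen-Douglas analogue of the fact that the commutant of a single Jordan block is a local algebra (compare Proposition \ref{jordan facts}) --- so that $K_0(\mathcal{A}(R))\cong\mathbb{Z}^{s}$ with the idempotents of the decomposition mapping onto the standard generators. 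Because $\mathcal{A}(R)$, its radical, and $K_0(\mathcal{A}(R))$ depend only on $R$, any second strongly irreducible decomposition $\{E_j'\}$ must contain exactly $m_i$ idempotents in each $K_0$-class, hence have the same length $l=\sum_i m_i$; lifting, through the radical, the obvious invertible of $\bigoplus_i M_{m_i}(\C)$ matching the two families of images then produces an invertible $X\in\mathcal{A}(R)$ with $XE_jX^{-1}=E_j'$ after reindexing. Carrying out the structure theorem for $\mathcal{A}(R)/\mathrm{rad}\,\mathcal{A}(R)$ and the $K_0$ computation is the technical core of \cite{Jiang3}, so in this paper it is cleanest simply to cite Theorem 5.5.12 there.
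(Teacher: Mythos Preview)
The paper does not prove this theorem at all; it is stated as a citation from \cite{Jiang3} (Theorem 5.5.12) and used as a black box. So there is no ``paper's own proof'' to compare against. Your proposal goes well beyond what the paper does: you supply a correct self-contained argument for the first assertion (the holomorphic-scalar argument for $B_1(\Omega)\subset(SI)$ is standard and your version is clean), and a correct inductive proof of existence of an SI decomposition into Cowen--Douglas summands (your verification that $R|_{\ran E}\in B_{n_1}(\Omega)$ is sound, including the constancy of $n_1$ via the trace of a continuous idempotent family and the observation that $n_1\in\{0,n\}$ would force $E\in\{0,I\}$). You then rightly isolate uniqueness as the genuinely deep part and defer it to \cite{Jiang3}, which is exactly what the paper itself does for the \emph{entire} theorem.

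One small comment: in your existence step, when you pass from the topological direct sum $M\dot{+}M'$ to an orthogonal one via similarity, you should note that the induction hypothesis is being applied to $A$ and $B$ \emph{acting on their own Hilbert spaces}, not to $R|_M$ and $R|_{M'}$ directly; this is implicit in what you wrote but worth making explicit, since the definition of $B_k(\Omega)$ requires the ambient space. Otherwise the argument is fine, and your final remark---that citing \cite{Jiang3} for uniqueness is the cleanest route---matches precisely what the paper chose to do.
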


\begin{cor}
	Let $T$ be an analytic left invertible operator with $\mbox{\ind}(T) = -n$ for some $1 \leq n < \infty$.  Then $T \sim \oplus_{j=1}^m T_j$ where $T_j$ are analytic, $\sum_{j=1}^m \mbox{\ind}(T_j) = -n$ and $T_j \in (SI)$.
\end{cor}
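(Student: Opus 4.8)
The plan is to transport the strongly irreducible decomposition of $T^*$ (or equivalently $T^\dagger$) guaranteed by Theorem \ref{SI} back to $T$, checking along the way that each piece remains analytic left invertible with the right index. First I would invoke Theorem \ref{CD}: since $T$ is a natural analytic left invertible with $\operatorname{ind}(T) = -n$, there is an $\epsilon > 0$ with $T^* \in B_n(\Omega)$ for $\Omega = \{z : |z| < \epsilon\}$. Theorem \ref{SI} then gives an invertible $X \in \BH$ and pairwise orthogonal projections $E_j$, $j = 1, \dots, m$, summing to $I$, commuting with $T^*$, such that each restriction $(T^*)\mid_{\operatorname{ran}(E_j)}$ lies in $(SI) \cap B_{n_j}(\Omega)$ with $\sum n_j = n$. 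Writing $T^* = \bigoplus_{j=1}^m R_j$ (similarity), with $R_j \in (SI) \cap B_{n_j}(\Omega)$.

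Next I would take adjoints. Since $R \mapsto R^*$ sends $(SI)$ to $(SI)$ (noted in the excerpt just after the definition of strongly irreducible), and since the similarity $T^* \sim \bigoplus R_j$ implemented by $X$ dualizes to a similarity $T \sim \bigoplus R_j^*$ implemented by $(X^*)^{-1}$, we get $T \sim \bigoplus_{j=1}^m T_j$ with $T_j := R_j^*$ strongly irreducible. It remains to identify the $T_j$ as analytic left invertibles of the correct index. Because $R_j \in B_{n_j}(\Omega)$ and $0 \in \Omega$, Corollary \ref{CD implies anal} (applied with $T_j$ in the role of $T$ and $R_j = (T_j)^*$ in the role of $T^*$) tells us precisely that $T_j$ is an analytic left invertible operator with $\operatorname{ind}(T_j) = -n_j$. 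Summing, $\sum_{j=1}^m \operatorname{ind}(T_j) = -\sum n_j = -n$, as required.

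One small technical point I would address: Theorem \ref{SI} produces a decomposition up to \emph{similarity}, not unitary equivalence, so the $E_j$ are idempotents, not orthogonal projections; but the statement to be proved only asserts $T \sim \bigoplus T_j$, so this is harmless — the restrictions $R_j = T^*\mid_{\operatorname{ran}(E_j)}$ are well-defined operators on the (closed) ranges of the $E_j$ and the conjugation by $X$ turns the decomposition into a genuine orthogonal direct sum, which is all we need. I would also note that the case $n = 1$ is degenerate ($m = 1$, $T_1 = T$, $T$ itself strongly irreducible by Theorem \ref{SI}), so nothing special is needed there. The main obstacle — really the only nontrivial input — is having Theorem \ref{CD} and Theorem \ref{SI} in hand; once $T^*$ is placed in a Cowen-Douglas class the corollary is essentially a matter of dualizing and bookkeeping the indices, so I do not expect any genuine difficulty beyond keeping the adjoint/Cauchy-dual correspondence straight.

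\begin{proof}
By Theorem \ref{CD}, there exists $\epsilon > 0$ such that $T^* \in B_n(\Omega)$ for $\Omega = \{z : |z| < \epsilon\}$. Applying Theorem \ref{SI} to $R := T^*$, there is a strongly irreducible decomposition $\{E_j\}_{j=1}^m$ of $T^*$; that is, an invertible $X \in \BH$ with $\{X E_j X^{-1}\}$ pairwise orthogonal projections summing to $I$, with $E_j T^* = T^* E_j$, and with $R_j := T^* \mid_{\operatorname{ran}(E_j)} \in (SI) \cap B_{n_j}(\Omega)$ where $\sum_{j=1}^m n_j = n$. In particular $T^* \sim \bigoplus_{j=1}^m R_j$.

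Taking adjoints of the conjugation implementing this similarity, we obtain $T \sim \bigoplus_{j=1}^m T_j$, where $T_j := R_j^*$. Since an operator is strongly irreducible if and only if its adjoint is, each $T_j \in (SI)$. Moreover $R_j \in B_{n_j}(\Omega)$ and $0 \in \Omega$, so Corollary \ref{CD implies anal}, applied with $T_j$ in the role of $T$ and $R_j = T_j^*$ in the role of $T^*$, shows that $T_j$ is an analytic left invertible operator with $\operatorname{ind}(T_j) = -n_j$. Therefore $\sum_{j=1}^m \operatorname{ind}(T_j) = -\sum_{j=1}^m n_j = -n$, completing the proof.
\end{proof}
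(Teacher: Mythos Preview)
Your proof is correct and is precisely the argument the paper has in mind: the corollary is stated immediately after Theorem \ref{SI} with no proof, as the passage from $T^* \in B_n(\Omega)$ (via Theorem \ref{CD}) to the SI decomposition of $T^*$ and back to $T$ by adjoints is meant to be routine. Your invocation of Corollary \ref{CD implies anal} to recover analyticity and the index of each $T_j$ from $R_j \in B_{n_j}(\Omega)$ is exactly the right way to close the loop, and your handling of the similarity-versus-unitary-equivalence point is appropriate.
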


%\begin{proof}
%	By Theorem \ref{CD}, $T^* \in B_n(\Omega)$ for some disc $\Omega$ centered at the origin.  Therefore by Theorem \ref{SI}, $T^* \sim \oplus_{j=1}^m R_j$ where each  $R_j \in (SI) \cap B_{n_j}(\Omega)$ and $\sum_{j=1}^m n_j = n$.  By another application of Theorem \ref{CD}, $T_j:= R_j^*$ are analytic left invertibles that satisfy $\sum_{j=1}^m \mbox{ind}(T_j) = -n$.  Since $R_j \in (SI)$ and strong irreducibility is preserved under taking adjoints, $T_j \in (SI)$.  \qedhere
%\end{proof}

%Spacing issue?!?!?!?!

Theorem \ref{SI} states that operators in the Cowen-Douglas class have a decomposition analogous to the Jordan canonical forms for matrices.  Without loss of generality, we may assume that if $R \in B_n(\Omega)$, then $R = \oplus_{j=1}^m R_j$ where  $R_j \in (SI) \cap B_{n_j}(\Omega)$ where $\sum_{j=1}^m n_j = n$. This decomposition  suggests that in order to understand $\A_T$, we should first study the natural analytic left invertible operators that are strongly irreducible.  In particular, we should study the analytic left invertible operators with Fredholm index $-1$. 

In the isometric case, $T^* \in B_n(\Omega)$ decomposes to a direct sum of $n$ strongly irreducible operators in $B_1(\Omega)$.  Equivalently, pure isometric operators with $\mbox{ind}(T) = -n$ decompose into $n$ ``Jordan blocks'' of size $1$.  This turns out to not be the case in general. Notice that if $R \in B_n(\Omega) \cap (SI)$, then it cannot be further decomposed as a direct sum.  Indeed, suppose to the contrary that $R \in B_n(\Omega) \cap (SI)$ and $R \sim \oplus_{k=1}^n R_k$ with $R_k \in B_1(\Omega)$.  By Theorem \ref{SI}, each operator in $B_1(\Omega)$ is strongly irreducible.  Hence, $R$ would have two strongly irreducible decompositions that are dissimilar.  But Theorem \ref{SI} states that all Cowen-Douglas operators have a unique SI decomposition up to similarity, contradicting the assumption that $R \in B_n(\Omega) \cap (SI)$ and $R \sim \oplus_{k=1}^n R_k$ .

 For each $n$, one can construct Toeplitz operators $T$ over the Soblev space $W^{2,2}(\Omega)$ such that $T^* \in B_n(\Omega) \cap (SI)$. See \cite{DeSantis} for details. Thus, it is not always possible to decompose $T$ as a direct sum of left invertibles with Fredholm index $-1$. Nevertheless, Cowen-Douglas operators of rank $n$ take the form of triangular operators of size $n$:

\begin{thm}[\cite{Jiang} - Theorem 1.49]
	\label{CD decomp}
	Let $R \in B_n(\Omega)$ for $1 \leq n < \infty$.  Then there exists $n$ operators $R_1, \dots R_n$ such that $R_i \in B_1(\Omega)$ and
	\[
	R = \begin{pmatrix} 
	R_1 & * &* &*\\
	& R_2 &* &* \\
	& & \ddots & \vdots\\
	& &  & R_n
	\end{pmatrix}
	\]
	with respect to some decomposition $\HH = \oplus_{i=1}^n \HH_i$.
\end{thm}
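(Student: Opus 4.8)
The plan is to manufacture an increasing chain of $R$-invariant subspaces
\[
0 = \HH^{(0)} \subset \HH^{(1)} \subset \cdots \subset \HH^{(n)} = \HH
\]
with respect to which $R$ is automatically block upper triangular, and to arrange the chain so that the operator $R$ induces on each successive quotient $\HH^{(j)}/\HH^{(j-1)}$ lies in $B_1(\Omega)$. Writing $\HH_j := \HH^{(j)} \ominus \HH^{(j-1)}$, invariance of the $\HH^{(j)}$ makes all sub-diagonal corners of $R$ vanish, and the diagonal block $R_j$ is unitarily equivalent to the compression $P_{\HH_j} R\mid_{\HH_j}$, which -- since $\HH^{(j-1)}$ is $R$-invariant -- is in turn unitarily equivalent to the operator $\dot R_j$ that $R$ induces on $\HH^{(j)}/\HH^{(j-1)}$. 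Thus the statement reduces to building such a chain with $\dot R_j \in B_1(\Omega)$ for each $j$.

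To build the chain I would use the holomorphic structure of the eigenbundle $E_R$. By the Cowen--Douglas theory (cf. Corollary \ref{spanning}, Theorem \ref{zhu theorem}) $E_R$ has holomorphic cross-sections; assuming, as holds in every application in this paper, that $\Omega$ is a disc (the general case is \cite{Jiang}), the rank-$n$ bundle $E_R$ is holomorphically trivial, so there exist holomorphic maps $\gamma_1, \dots, \gamma_n : \Omega \to \HH$ with $\gamma_i(\lambda) \in \ker(R-\lambda)$ and $\{\gamma_i(\lambda)\}_{i=1}^n$ a basis of $\ker(R-\lambda)$ for every $\lambda$. Set
\[
\HH^{(j)} := \overline{\bigvee_{\lambda \in \Omega} \mbox{\Span}\{\gamma_1(\lambda), \dots, \gamma_j(\lambda)\}}.
\]
Since $R\gamma_i(\lambda) = \lambda\gamma_i(\lambda)$, each $\HH^{(j)}$ is $R$-invariant; moreover $\HH^{(0)} = 0$ and $\HH^{(n)} = \overline{\bigvee_\lambda \ker(R-\lambda)} = \HH$ by condition \textit{iv.} of Definition \ref{CDdef}. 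One then arranges -- by a suitable ordering or perturbation of the frame -- that every inclusion is proper, so that the chain has length exactly $n$ and every quotient $\HH^{(j)}/\HH^{(j-1)}$ is non-zero.

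Next I would verify $\dot R_j \in B_1(\Omega)$. Let $q_j : \HH^{(j)} \to \HH^{(j)}/\HH^{(j-1)}$ be the quotient map and put $e_j(\lambda) := q_j(\gamma_j(\lambda))$. Then $e_j$ is holomorphic, $\dot R_j e_j(\lambda) = \lambda e_j(\lambda)$, and $\{e_j(\lambda) : \lambda \in \Omega\}$ densely spans $\HH^{(j)}/\HH^{(j-1)}$ because $\gamma_j(\Omega)$ together with $\HH^{(j-1)}$ densely spans $\HH^{(j)}$; after dividing out the zeros of $e_j$ one may take $e_j$ nonvanishing. Conditions \textit{i.} and \textit{iv.} of Definition \ref{CDdef} are then immediate from $e_j$, so that $\dot R_j \in B_1(\Omega)$ reduces to the two assertions $\dim\ker(\dot R_j - \lambda) = 1$ and $(\dot R_j - \lambda)(\HH^{(j)}/\HH^{(j-1)}) = \HH^{(j)}/\HH^{(j-1)}$ for $\lambda \in \Omega$ (alternatively one applies the local criterion of Lemma \ref{Li lemma} at $\lambda_0 = 0$). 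Equivalently, passing to adjoints, $\dot R_j^*$ is $M_z$ on the reproducing kernel Hilbert space of analytic functions attached to the spanning section $e_j$ exactly as in \eqref{anal formula}--\eqref{multiplication}, and one must check that multiplication by $z - \bar\lambda$ has one-dimensional cokernel there, i.e. that one can divide by $z - \bar\lambda$ inside that space.

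This last step is the main obstacle. One must rule out that the flag space $\HH^{(j)}$ acquires ``spurious'' eigenvectors of $R$ beyond $\mbox{\Span}\{\gamma_1(\lambda), \dots, \gamma_j(\lambda)\}$ at some $\lambda$ (which is also exactly what forces the chain to be strictly increasing), and must establish the surjectivity of $\dot R_j - \lambda$; both are genuinely complex-analytic facts about how the holomorphic subbundle generated by $\gamma_1, \dots, \gamma_j$ sits inside $E_R$, and it is here -- rather than in any bookkeeping -- that holomorphic, not merely continuous, dependence of the sections is used. These are precisely the estimates carried out in \cite{Jiang}. Granting them, each $\dot R_j \in B_1(\Omega)$; unwinding the identifications of the first paragraph then displays $R$ in the claimed upper-triangular form with diagonal blocks $R_i \cong \dot R_i \in B_1(\Omega)$.
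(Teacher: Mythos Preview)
The paper does not prove this statement at all: Theorem \ref{CD decomp} is quoted verbatim from \cite{Jiang} (Theorem 1.49) and used as a black box, with no argument given. So there is no ``paper's own proof'' to compare your proposal against.

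That said, your outline is the standard route to this result and is essentially the argument in \cite{Jiang}: trivialize the eigenbundle by a holomorphic frame $\gamma_1,\dots,\gamma_n$, form the flag $\HH^{(j)}=\overline{\bigvee_\lambda \Span\{\gamma_1(\lambda),\dots,\gamma_j(\lambda)\}}$, and read off the upper-triangular form with $B_1(\Omega)$ diagonal blocks from the induced operators on the successive quotients. You correctly identify where the real work lies---showing $\ker(\dot R_j-\lambda)$ is one-dimensional and $\dot R_j-\lambda$ is onto, i.e.\ that passing to the flag does not create or destroy eigenvectors---and you are honest that you are deferring those estimates to \cite{Jiang}. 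As a self-contained proof your write-up is therefore incomplete at exactly the point you flag; as a summary of the method it is accurate.
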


\begin{cor}
	\label{reduction}
	If $T$ is an analytic left invertible with $\mbox{\ind}(T) = -n$ for $1 \leq n < \infty$, then there exists $n$ analytic left invertibles $T_1, \dots T_n$ such that $\mbox{\ind}(T_i) = -1$ and
	\begin{equation}
	\label{decomp}
	T = \begin{pmatrix}
	T_1 &  &  & \\
	* & T_2 &  & \\
	\vdots & \vdots & \ddots & \\
	* & * & \dots & T_n 
	\end{pmatrix}
	\end{equation}
	with respect to some decomposition $\HH = \oplus_{i=1}^n \HH_i$.
\end{cor}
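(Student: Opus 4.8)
The plan is to obtain Corollary \ref{reduction} as a direct translation, through the equivalence in Theorem \ref{CD}, of the triangularization theorem for Cowen-Douglas operators (Theorem \ref{CD decomp}). First I would invoke Theorem \ref{CD}: since $T$ is a natural analytic left invertible with $\mbox{\ind}(T) = -n$, there is an $\epsilon > 0$ --- one may take $\epsilon = \|T^\dagger\|^{-1}$, so that the relevant domain is the disk $\Omega = \Omega_T = \{z : |z| < \epsilon\}$ --- for which $T^* \in B_n(\Omega)$.

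Next I would apply Theorem \ref{CD decomp} to $R = T^*$. This produces an orthogonal decomposition $\HH = \bigoplus_{i=1}^n \HH_i$ and operators $R_1, \dots, R_n$ with $R_i \in B_1(\Omega)$ such that, with respect to this decomposition, $T^*$ is block upper triangular with diagonal blocks $R_1, \dots, R_n$. Taking adjoints, $T = (T^*)^*$ is then block lower triangular with respect to the same decomposition $\HH = \bigoplus_{i=1}^n \HH_i$, and its diagonal blocks are $T_i := R_i^*$; that is, $T$ has precisely the form displayed in Equation (\ref{decomp}). It remains only to identify the diagonal blocks: each $R_i$ lies in $B_1(\Omega)$ with $\Omega = \{z : |z| < \epsilon\}$ a disk centered at the origin, so Corollary \ref{CD implies anal} (applied with $n = 1$) shows that $T_i = R_i^*$ is an analytic left invertible operator with $\mbox{\ind}(T_i) = -1$, as required.

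I do not expect a genuine obstacle here: the corollary is, in effect, a dictionary translation. The only two points deserving attention are bookkeeping rather than substance --- (i) verifying that the Cowen-Douglas domain in play is a disk centered at $0$ (which it is, being $\Omega_T$), so that Corollary \ref{CD implies anal} can legitimately be applied to the diagonal blocks; and (ii) noting that passing to adjoints transposes the block operator matrix, so Jiang's upper-triangular normal form for $T^*$ becomes a lower-triangular form for $T$, with each diagonal entry $R_i \in B_1(\Omega)$ replaced by its adjoint $T_i = R_i^*$ --- which is exactly why the diagonal blocks come out as analytic left invertibles of index $-1$ rather than as Cowen-Douglas operators.
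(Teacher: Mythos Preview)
Your argument is correct and is exactly the derivation the paper intends: the corollary is stated immediately after Theorem \ref{CD decomp} with no separate proof, so the implied argument is precisely to pass to $T^* \in B_n(\Omega_T)$ via Theorem \ref{CD}, apply Theorem \ref{CD decomp}, take adjoints to flip the triangular form, and invoke Corollary \ref{CD implies anal} on the diagonal blocks. Your two bookkeeping remarks (that $\Omega_T$ is a disk about $0$ and that adjoints transpose the block matrix) are the only points needing care, and you handle both correctly.
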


Corollary \ref{reduction}  further emphasizes the need to analyze analytic left invertible operators with $\mbox{\ind}(T) = -1$.  We showed above that we can always decompose $T$ into a direct sum of strongly irreducible pieces.  The strongly irreducible blocks have the form of lower triangular operators.  If $T$ is decomposed as in Corollary \ref{reduction}, then $T_n = T\mid_{\HH_n}$ and $T_n$ is an analytic left invertible operator with $\mbox{\ind}(T_n) = -1$.  If we are to gain any insight into a general $\A_T$, it is mandatory to understand the index $-1$ case first.  This analysis will be taken up next section.

%%%%%%%%%%%%%%%%%%%%%%%%%%%%%%%%%%%%%%%%%%%%%%%%%%%%%%%%%%
%%%%%%%%%%%%%%%%%%%%%%%%%%%%%%%%%%%%%%%%%%%%%%%%%%%%%%%%%%
%%%%%%%%%%%%%%%%%%%%%%%%%%%%%%%%%%%%%%%%%%%%%%%%%%%%%%%%%%
%%%%%%%%%%%%%%%%%%%%%%%%%%%%%%%%%%%%%%%%%%%%%%%%%%%%%%%%%%
%%%%%%%%%%%%%%%%%%%%%%%%%%%%%%%%%%%%%%%%%%%%%%%%%%%%%%%%%%
%%%%%%%%%%%%%%%%%%%%%%%%%%%%%%%%%%%%%%%%%%%%%%%%%%%%%%%%%%
%%%%%%%%%%%%%%%%%%%%%%%%%%%%%%%%%%%%%%%%%%%%%%%%%%%%%%%%%%
%%%%%%%%%%%%%%%%%%%%%%%%%%%%%%%%%%%%%%%%%%%%%%%%%%%%%%%%%%
\section{The Algebra $\A_T$ for $\mbox{ind}(T)=-1$}
%%%%%%%%%%%%%%%%%%%%%%%%%%%%%%%%%%%%%%%%%%%%%%%%%%%%%%%%%%
%%%%%%%%%%%%%%%%%%%%%%%%%%%%%%%%%%%%%%%%%%%%%%%%%%%%%%%%%%
%%%%%%%%%%%%%%%%%%%%%%%%%%%%%%%%%%%%%%%%%%%%%%%%%%%%%%%%%%
%%%%%%%%%%%%%%%%%%%%%%%%%%%%%%%%%%%%%%%%%%%%%%%%%%%%%%%%%%
%%%%%%%%%%%%%%%%%%%%%%%%%%%%%%%%%%%%%%%%%%%%%%%%%%%%%%%%%%
%%%%%%%%%%%%%%%%%%%%%%%%%%%%%%%%%%%%%%%%%%%%%%%%%%%%%%%%%%
%%%%%%%%%%%%%%%%%%%%%%%%%%%%%%%%%%%%%%%%%%%%%%%%%%%%%%%%%%
%%%%%%%%%%%%%%%%%%%%%%%%%%%%%%%%%%%%%%%%%%%%%%%%%%%%%%%%%%

The preceding sections showed that, in general, we cannot reduce to the assumptions analytic or $\mbox{\ind}(T) = -1$ as we could in the isometric case.  As remarked, $T$ cannot be decomposed as a direct sum of an analytic operator and an invertible operator.  Furthermore,  even if an operator is analytic, it cannot be reduced to the index $-1$ case. Nevertheless, there is a summand on which $T$ will be analytic.   Similar statements may be made about strong irreducibility and the Fredholm index.  Under the assumption of analytic, Theorem \ref{CD} implies that $T^*$ is Cowen-Douglas.  Corollary \ref{reduction} tells us that, in this case, $T$ may be written as a triangular operator where each element on the diagonal is an analytic left invertible of index $-1$.  

Although we cannot reduce to the case of analytic or index $-1$, the epistemological viewpoint of the author is that an important first step in understanding $\A_T$ is simplifying to this case. We therefore make the following minimality assumptions on $T$ for the remainder of this section:

\begin{assumption}
 Henceforth, our left invertible operators will satisfy
	
	\begin{enumerate}[i.]
		\item The Fredholm index: $\mbox{\ind}(T) = -1$
		\item Analytic:  $\bigcap T^n \HH = 0$
	\end{enumerate}
	
\end{assumption}

If $T$ is an analytic isometry with $\mbox{ind}(T) = -1$, we can represent $T$ as $M_z$ on $H^2(\mathbb{T})$.  This yields an elegant representation for $C^*(T)$.  The analyticity ensures that the basis associated to $M_z$, the orthonormal basis $z^n$, spans the  Hilbert space.  The Fredholm index guarantees that $\mathcal{T}$ will be an irreducible C*-algebra, which contains a compact $I-T T^*$, and therefore all the compacts. Furthermore, one discovers that each element of $\mathcal{T}$ may be uniquely written as $T_f + K$ for some $f \in C(\mathbb{T})$ and $K \in \KH$. 

The general case is similar.  That is, if $T$ is an analytic, left invertible operator with Fredholm index $-1$, then $\A_T$ contains the compact operators.  This will allow us to determine the isomorphism classes of $\A_T$.  

It is worth remarking that since $\A_T$ is a concrete operator algebra, it belongs to many reasonable categories.  A priori, it is not clear which choice of morphism one should consider (bounded, completely bounded, etc.). Fortunately, all reasonable choices are equivalent. It will be shown that two such algebras are boundedly isomorphic if and only if the isomorphism is implemented by an invertible. This will bring us to analyze the similarity orbit of $T$.  For Cowen-Douglas operators, the similarity orbit has been extensively studied.  We will leverage these results into our analysis of the study of $\A_T$.

%%%%%%%%%%%%%%%%%%%%%%%%%%%%%%%%%%%%%%%%%%%%%%%%%%%%%%%%%%
%%%%%%%%%%%%%%%%%%%%%%%%%%%%%%%%%%%%%%%%%%%%%%%%%%%%%%%%%%
%%%%%%%%%%%%%%%%%%%%%%%%%%%%%%%%%%%%%%%%%%%%%%%%%%%%%%%%%%
%%%%%%%%%%%%%%%%%%%%%%%%%%%%%%%%%%%%%%%%%%%%%%%%%%%%%%%%%%
\subsection{The Compact Operators}
%%%%%%%%%%%%%%%%%%%%%%%%%%%%%%%%%%%%%%%%%%%%%%%%%%%%%%%%%%
%%%%%%%%%%%%%%%%%%%%%%%%%%%%%%%%%%%%%%%%%%%%%%%%%%%%%%%%%%
%%%%%%%%%%%%%%%%%%%%%%%%%%%%%%%%%%%%%%%%%%%%%%%%%%%%%%%%%%
%%%%%%%%%%%%%%%%%%%%%%%%%%%%%%%%%%%%%%%%%%%%%%%%%%%%%%%%%%
In this section, we show that  if $T$ is analytic left invertible with $\mbox{\ind}(T) = -1$, then  $\A_T$ contains the compact operators. Our approach is to show that, more generally $\overline{\mbox{\Alg}}(T,L)$ contains the compact operators for any left inverse $T$ and left inverse $L$.  This will allow us to conclude that $\overline{\mbox{\Alg}}(T,L) = \A_T$ for any left inverse $L$.  First, let us establish some notation.

Fix a left inverse $L$  of $T$. We set $F_{0,0} = I - T T^\dagger$. That is, $F_{0,0}$ is the projection onto $\ker(T^\dagger)$.  We define
\[
F_{n,m,L}:= T^{n} (I - T T^\dagger) {L}^{m}
\]
for each $n,m \in \mathbb{Z}_{\geq 0}$. For $x,y,z \in \HH$ we use $\theta_{x,y}$  to denote the rank one operator $z \mapsto \langle z, y \rangle x$.

Recall the Schauder basis and dual basis associated to $T$ and $L$. Notice that since $\mbox{\ind}(T) = -1$, we have a simplified notation. Let $x_0 \in \ker(T^*)$ be a unit vector, so $\mbox{\Span}\{x_0\} = \ker(T^*)$.  Denote the Schauder basis of $T$ and dual basis $T$ (with respect to $L$) via $x_n:= T^n x_0$ and $x_n':= (L^*)^n x_0$. Then by definition, $I-TT^\dagger$ is the projection $\theta_{x_0,x_0}$. So for each $n,m$ and $x \in \HH$,
\[
F_{n,m,L}(x) = T^{n} (I - T T^\dagger) {L}^{m}(x) = T^{n}(\langle {L}^{m}(x), x_0 \rangle x_0) = \langle x, x_m' \rangle x_n.
\]
That is, $F_{n,m,L}$ is the rank one operator $\theta_{x_n,x_m'}$. Let
\[
\mathscr{K}_L := \overline{\mbox{\Span}}\{F_{n,m,L}\}_{n,m \geq 1}.
\]
 Recall from Proposition \ref{commutator} that if $L = T^\dagger$, then $\mathscr{K}_L = \mathscr{K}_T = \mathscr{C}$, the commutator ideal.   As $F_{n,m,L} \in \mbox{\Alg}(T,L)$,  $\mathscr{K}_L \subset \overline{\mbox{\Alg}}(T,L)$.  Furthermore, the $F_{n,m,L}$ are rank one operators for each $n,m$; and so $\mathscr{K}_L \subset \mathscr{K}(\mathscr{H})$. Our previous work on Schauder bases allows us to conclude that $\mathscr{K}_L = \KH$.

\begin{thm}
	\label{compact}
	Let $T \in \BH$ be an analytic, left invertible with $\mbox{\ind}(T) = -1$, and $L$ be a left inverse of $T$.  Then $\KH = \KK_L$.  Thus,  $\overline{\mbox{\Alg}}(T,L)$ contains the algebra of compact operators $\KH$.  
\end{thm}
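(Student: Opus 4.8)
The plan is to show that $\mathscr{K}_L$ contains all rank-one operators, whence $\mathscr{K}_L = \KH$ by taking finite sums and norm limits; then, since $F_{n,m,L} \in \mbox{\Alg}(T,L)$, we get $\KH = \mathscr{K}_L \subseteq \overline{\mbox{\Alg}}(T,L) \subseteq \A_T$ (the last inclusion because $L = T^\dagger + A(I-TT^\dagger)$ for some $A$ by Proposition \ref{left inverses}, and $I - TT^\dagger \in \A_T$, so in fact one should check $\overline{\mbox{\Alg}}(T,L) = \A_T$ directly as the theorem asserts — see the last paragraph). The engine is Theorem \ref{sbasis}: since $\mbox{\ind}(T) = -1$, picking a unit vector $x_0 \in \ker(T^*)$, the vectors $x_n := T^n x_0$ form a Schauder basis of $\HH$, the vectors $x_n' := (L^*)^n x_0$ form the dual basis, and by Corollary \ref{bi ortho} these are bi-orthogonal: $\langle x_m, x_n'\rangle = \delta_{mn}$. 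Moreover $F_{n,m,L} = \theta_{x_n, x_m'}$ as computed just above the theorem statement.

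First I would record the functional-analytic facts about Schauder bases I need. A Schauder basis $\{x_n\}$ has an associated sequence of continuous coordinate functionals $x_n^*$ (here $x_n^* = \langle \cdot, x_n'\rangle$ by bi-orthogonality and Proposition \ref{expansion}), and the partial-sum projections $S_N := \sum_{n=0}^{N} \theta_{x_n, x_n'}$ are uniformly bounded, say $\sup_N \|S_N\| = C < \infty$ (the basis constant), and $S_N \to I$ strongly. Now fix arbitrary $u, v \in \HH$; I claim $\theta_{u,v} \in \mathscr{K}_L$. Write $u = \sum_n a_n x_n$ and, expanding $v$ in the dual basis (legitimate since $\{x_n'\}$ is also a Schauder basis by Theorem \ref{sbasis}(ii)), $v = \sum_m b_m x_m'$. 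Then the finite sums $\sum_{n,m \le N} a_n \overline{b_m}\, \theta_{x_n, x_m'} = \sum_{n,m \le N} a_n \overline{b_m}\, F_{n,m,L}$ lie in $\mbox{\Span}\{F_{n,m,L}\}$, and they equal $S_N u \mapsto \langle \cdot, S_N^{*} v\rangle$... more precisely $\big(\sum_{n\le N} a_n x_n\big) \otimes \big(\sum_{m \le N} b_m x_m'\big) = (S_N u) \otimes (\text{partial sum of } v)$, which converges in operator norm to $\theta_{u,v}$ because $\|(\text{partial sum of }u) \otimes (\text{partial sum of }v) - u \otimes v\| \to 0$ (standard: $\|p\otimes q - u \otimes v\| \le \|p - u\|\|q\| + \|u\|\|q - v\|$). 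Hence $\theta_{u,v} \in \overline{\mbox{\Span}}\{F_{n,m,L}\} = \mathscr{K}_L$. Since every finite-rank operator is a finite sum of such $\theta_{u,v}$ and the finite-rank operators are norm-dense in $\KH$, we conclude $\KH \subseteq \mathscr{K}_L$; the reverse inclusion $\mathscr{K}_L \subseteq \KH$ was already noted (each $F_{n,m,L}$ is rank one). Thus $\KH = \mathscr{K}_L$.

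The main obstacle is purely bookkeeping around the order of summation: a Schauder basis need not be unconditional (as the excerpt emphasizes in the discussion after Proposition \ref{order}), so I must be careful to approximate $\theta_{u,v}$ using the \emph{partial sums in the given order} rather than rearranging, and to invoke the uniform boundedness of the partial-sum projections rather than absolute convergence. One clean way to avoid even mentioning the dual-basis expansion of $v$: note $\theta_{u,v} = \lim_N \theta_{S_N u,\, v}$ in norm (since $S_N u \to u$), and $\theta_{S_N u, v} = \sum_{n \le N} a_n\, \theta_{x_n, v}$, so it suffices to show each $\theta_{x_n, v} \in \mathscr{K}_L$; and $\theta_{x_n, v} = \lim_M \theta_{x_n, S_M' v}$ where $S_M'$ is the partial-sum projection for the dual basis, with $\theta_{x_n, S_M' v} = \sum_{m \le M} \overline{\langle v, x_m\rangle}\, \theta_{x_n, x_m'} = \sum_{m \le M} \overline{\langle v, x_m\rangle}\, F_{n,m,L} \in \mbox{\Span}\{F_{n,m,L}\}$. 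Finally, for the last sentence of the theorem: $\overline{\mbox{\Alg}}(T,L) \supseteq \KH$ is now immediate, and since $T, L \in \A_T$ would need $L \in \A_T$ — indeed $L = T^\dagger + A(I - TT^\dagger)$ and $I - TT^\dagger = F_{0,0} \in \KH \subseteq \A_T$, but $A$ need not lie in $\A_T$; however the theorem only claims $\overline{\mbox{\Alg}}(T,L) \supseteq \KH$, which is what we proved, so I would state exactly that and defer the identity $\overline{\mbox{\Alg}}(T,L) = \A_T$ (which follows by symmetry, writing $T^\dagger = L - A(I-TT^\dagger)$ with $I - TT^\dagger \in \KH \subseteq \overline{\mbox{\Alg}}(T,L)$, modulo the same caveat) to the surrounding text where it is actually needed.
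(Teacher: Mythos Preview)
Your proposal is correct and follows essentially the same approach as the paper: show every rank-one operator $\theta_{y,z}$ lies in $\mathscr{K}_L$ by approximating $y$ with finite sums in the basis $\{x_n\}$ and $z$ with finite sums in the dual basis $\{x_n'\}$, then use $F_{n,m,L} = \theta_{x_n,x_m'}$ and the estimate $\|p\otimes q - u\otimes v\| \le \|p-u\|\|q\| + \|u\|\|q-v\|$. The paper compresses all of this into the phrase ``simple estimates'' and relies only on $\overline{\Span}\{x_n\} = \HH = \overline{\Span}\{x_n'\}$, whereas you supply the Schauder-basis bookkeeping (uniform boundedness of partial-sum projections, care about order of summation) that makes those estimates rigorous; your final remark correctly identifies that the equality $\overline{\Alg}(T,L) = \A_T$ is not part of this theorem but is the content of the subsequent Corollary~\ref{any left}.
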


\begin{proof}
	Let $y,z \in \HH$.  Since $\overline{\mbox{\Span}}\{x_n\} = \HH = \overline{\mbox{\Span}}\{x_n'\}$, there exists a sequence of  sums in $x_n$ and $x_n'$ converging to $y$ and $z$ respectively.  It follows that the rank one operator $\theta_{y,z}$ is a norm limit of the span of the $\{F_{n,m,L}\}$ by simple estimates. Thus, $\mathscr{K}_L$ contains all the rank one operators.  Since $\KK_L$ is norm-closed by definition, $\KK_L \supset \KH$. Since $\mathscr{K}_L \subset \mathscr{K}(\mathscr{H})$, we have $\KK_L = \KH$. 
\end{proof}

\begin{cor}
	\label{any left}
	Let $T \in \BH$ be left invertible (analytic with $\mbox{\ind}(T) = -1$), and $L$ be a left inverse of $T$.  Then  $\A_T = \overline{\mbox{\Alg}}(T,L)$.
\end{cor}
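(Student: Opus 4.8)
The plan is to use the elementary fact that any two left inverses of $T$ differ by a rank one (hence compact) operator, combined with Theorem \ref{compact}, which shows that \emph{both} algebras in question already contain all of $\KH$. So the corollary is really just the observation that passing from $T^\dagger$ to an arbitrary left inverse $L$ perturbs the generator by something that lies inside each algebra.

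Concretely, I would first invoke Proposition \ref{left inverses} to write $L = T^\dagger + A(I - T T^\dagger)$ for some $A \in \BH$. Since $\mbox{\ind}(T) = -1$, the projection $I - T T^\dagger$ onto $\ker(T^*)$ has rank one, so $K := A(I - T T^\dagger)$ is a rank one operator; in particular $K \in \KH$ and $L - T^\dagger = K = -(T^\dagger - L)$. For the inclusion $\overline{\mbox{\Alg}}(T,L) \subseteq \A_T$, apply Theorem \ref{compact} with the left inverse $T^\dagger$: this gives $\KH = \mathscr{K}_{T^\dagger} \subseteq \overline{\mbox{\Alg}}(T,T^\dagger) = \A_T$, so $K \in \A_T$ and hence $L = T^\dagger + K \in \A_T$. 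Since $T \in \A_T$ as well, $\mbox{\Alg}(T,L) \subseteq \A_T$, and as $\A_T$ is norm closed we conclude $\overline{\mbox{\Alg}}(T,L) \subseteq \A_T$. For the reverse inclusion, apply Theorem \ref{compact} to the left inverse $L$ itself: this gives $\KH = \mathscr{K}_L \subseteq \overline{\mbox{\Alg}}(T,L)$, so $K \in \overline{\mbox{\Alg}}(T,L)$, whence $T^\dagger = L - K \in \overline{\mbox{\Alg}}(T,L)$. Together with $T \in \overline{\mbox{\Alg}}(T,L)$ this shows $\mbox{\Alg}(T, T^\dagger) \subseteq \overline{\mbox{\Alg}}(T,L)$, and taking closures yields $\A_T \subseteq \overline{\mbox{\Alg}}(T,L)$.

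There is no genuine obstacle here: the substantive work has already been carried out in Theorem \ref{compact}, and the only points to keep straight are that both algebras are norm closed and that both contain $\KH$, so that the rank one difference between $T^\dagger$ and $L$ is harmless in either direction.
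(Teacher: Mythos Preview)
Your proof is correct and follows essentially the same approach as the paper: use Proposition \ref{left inverses} to see that $L$ and $T^\dagger$ differ by a compact, then apply Theorem \ref{compact} to each of $T^\dagger$ and $L$ to conclude that both algebras contain $\KH$, forcing the two generating sets to lie in each other's algebra. Your write-up is in fact slightly more careful than the paper's, which states the first inclusion with the containment symbol reversed before appealing to ``reversing the argument.''
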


\begin{proof}
	By Proposition \ref{left_facts}, each left inverse $L$ of $T$ has the form 
	\[
	L= T^\dagger + A(I-T T^\dagger)
	\]
	for some $A \in \BH$.	Thus, each left inverse of $T$ differs from $T^\dagger$ by a compact operator.  By Theorem \ref{compact}, $\overline{\mbox{\Alg}}(T,L)$ contains $\KH$, and therefore $T^\dagger$.  So $\overline{\mbox{\Alg}}(T,L) \subseteq \A_T$.  Reversing the argument, $\overline{\mbox{\Alg}}(T,L) = \A_T$.
\end{proof}

Recall that an ideal $\KK$ of a Banach Algebra $\A$ is said to be \textit{essential} if it has non-trivial intersection with  all non-zero ideals of $\A$.  Alternatively, if $A \in \A$ and $A \KK =0$, then $A = 0$. In the next section, we investigate the morphisms between algebras of the form $\A_T$.  An important result required in subsequent analysis is the following: 
\begin{prop}
	\label{essential}
	The compact operators $\KH$ are an essential ideal of $\A_T$.  In fact, $\KH$ is contained in any closed ideal of $\A_T$.
\end{prop}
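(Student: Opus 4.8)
The plan is to prove three claims in sequence: $\KH$ is a closed two-sided ideal of $\A_T$; every nonzero closed ideal of $\A_T$ contains $\KH$; and $\KH$ is essential. The first claim is essentially free: $\KH$ is a two-sided ideal of $\BH$, and since $\A_T \subseteq \BH$ with $\KH \subseteq \A_T$ by Theorem \ref{compact}, we get $AK \in \KH$ and $KA \in \KH$ for all $A \in \A_T$ and $K \in \KH$; moreover $\KH$ is norm-closed.

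The substantive step is the second claim, and the key point I would use is that, by Theorem \ref{compact}, $\A_T$ contains all the rank one operators $\theta_{x,y}$. Given a nonzero closed ideal $\J \subseteq \A_T$, choose $A \in \J$ with $A \neq 0$, and pick a vector $x$ with $Ax \neq 0$. For arbitrary $w, z \in \HH$ the operators $\theta_{w, Ax}$ and $\theta_{x,z}$ lie in $\KH \subseteq \A_T$, so $\theta_{w, Ax}\, A\, \theta_{x,z} \in \J$ (using that $\J$ is a two-sided ideal of $\A_T$). A direct computation on an arbitrary vector $u$ gives
\[
\theta_{w, Ax}\, A\, \theta_{x, z}\, u = \langle u, z \rangle\, \langle Ax, Ax \rangle\, w = \|Ax\|^2\, \theta_{w,z}\, u,
\]
so $\|Ax\|^2\, \theta_{w,z} \in \J$, and since $\|Ax\| \neq 0$ we conclude $\theta_{w,z} \in \J$. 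As $w$ and $z$ range over $\HH$, the ideal $\J$ contains every rank one operator, and being closed it contains their closed linear span, which is $\KH$.

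The third claim is then immediate, and also admits a direct proof: if $A \in \A_T$ satisfies $A\KH = 0$, then in particular $A\theta_{x,x} = \theta_{Ax, x} = 0$ for every $x \in \HH$, forcing $Ax = 0$ for all $x$, i.e. $A = 0$; hence $\KH$ has nonzero intersection with every nonzero closed ideal. I do not expect a genuine obstacle here. The one point needing slight care — and the reason the sandwich argument above is phrased purely multiplicatively rather than via adjoints — is that $\A_T$ is a non-self-adjoint operator algebra, so one may not assume $A^* \in \A_T$; the passage from a nonzero $A \in \J$ to a nonzero rank one operator in $\J$ must therefore be carried out using only the ideal property together with the rank one operators already known to lie in $\A_T$.
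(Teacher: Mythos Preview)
Your proof is correct and follows essentially the same sandwich argument as the paper: flank a nonzero $A \in \J$ by rank one operators (available in $\A_T$ via Theorem \ref{compact}) to exhibit an arbitrary rank one operator in $\J$. The paper's version writes the right-hand factor as $\theta_{x,x}A^*B^*$, but this is still just a rank one operator in $\KH \subset \A_T$, so no self-adjointness of $\A_T$ is actually invoked there either; your explicitly adjoint-free formulation $\theta_{w,Ax}\,A\,\theta_{x,z} = \|Ax\|^2\,\theta_{w,z}$ is a slightly cleaner packaging of the same idea.
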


\begin{proof}
	Let $\J$ be a non-zero closed two sided ideal of $\A_T$, and $A \in \J$ be non-zero.  Then there is some $x \in \HH$ such that $\|A x \|=1$.  Fix $y \in \HH$, and let $B := \theta_{y,A(x)}$.  Then $B(A(x)) = y$.  Thus for all $h \in \HH$, we have
	\[
	B A \theta_{x,x} A^* B^*(h) = B A \left( \langle h, BA (x) \rangle x \right) = \langle h, y \rangle y = \theta_{y,y} (h).
	\]
	Since $\KH \subset \A_T$, it follows that the rank one operators $B$ and $\theta_{x,x} A^* B^*$ are in $\A_T$. Since $A \in \J$ and $\J$ is an ideal, we must have that $\theta_{y,y}$ is inside of $\J$.  Thus for any $w,z \in \HH$, $\theta_{w,z} = \theta_{w,y} \theta_{y,y} \theta_{y,z}$ is in $\J$, so $\J$ contains all the finite rank operators, and thus contains $\KH$. 
\end{proof}

%%%%%%%%%%%%%%%%%%%%%%%%%%%%%%%%%%%%%%%%%%%%%%%%%%%%%%%%%%
%%%%%%%%%%%%%%%%%%%%%%%%%%%%%%%%%%%%%%%%%%%%%%%%%%%%%%%%%%
%%%%%%%%%%%%%%%%%%%%%%%%%%%%%%%%%%%%%%%%%%%%%%%%%%%%%%%%%%
%%%%%%%%%%%%%%%%%%%%%%%%%%%%%%%%%%%%%%%%%%%%%%%%%%%%%%%%%%
\subsection{Isomorphisms of $\A_T$}
%%%%%%%%%%%%%%%%%%%%%%%%%%%%%%%%%%%%%%%%%%%%%%%%%%%%%%%%%%
%%%%%%%%%%%%%%%%%%%%%%%%%%%%%%%%%%%%%%%%%%%%%%%%%%%%%%%%%%
%%%%%%%%%%%%%%%%%%%%%%%%%%%%%%%%%%%%%%%%%%%%%%%%%%%%%%%%%%
%%%%%%%%%%%%%%%%%%%%%%%%%%%%%%%%%%%%%%%%%%%%%%%%%%%%%%%%%%

Now that we have established that the compact operators $\KH \subseteq \A_T$ as a minimal ideal, we may identify the isomorphism classes of $\A_T$.  We will show that if $T_1$ and $T_2$ are two analytic left invertible operators with Fredholm index $-1$, then $\A_{T_1}$ is boundedly isomorphic to $\A_{T_2}$ if and only if the algebras are similar. This will be done by looking at how the bounded isomorphism behaves on the compact operators.

An interesting fact about bounded homomorphisms of C*-algebras is that they necessarily have closed range.  Indeed, we have the following observation due to Pitts:

\begin{thm}[\cite{Pitts} - Theorem 2.6]
	Suppose $\A$ is a C*-algebra and $\phi:\A \rightarrow \BH$ is a bounded homomorphism.  Let $\J = \ker \phi$.  Then there exists a real number $k > 0$ such that for each $n \in \mathbb{N}$, and $R \in M_n(\A)$,
	\[
	k \mbox{dist}(R, M_n(\J)) \leq \|\phi_n(R)\|.
	\]
\end{thm}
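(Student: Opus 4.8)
The plan is to reduce the inequality to a boundedness‑below property of the homomorphism induced on $\A/\J$, to settle that property at each fixed matrix level by descending to commutative $C^{*}$-subalgebras, and then to address uniformity of the constant in $n$, which I expect to be the delicate point.

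First I would pass to the quotient. Since $\J=\ker\phi$ is a closed two‑sided ideal of the $C^{*}$-algebra $\A$, the quotient $\A/\J$ is again a $C^{*}$-algebra and, for every $n$, the canonical map $M_n(\A)/M_n(\J)\to M_n(\A/\J)$ is an isometric $*$-isomorphism; hence $\mbox{dist}(R,M_n(\J))=\|q_n(R)\|$, where $q_n\colon M_n(\A)\to M_n(\A/\J)$ is the quotient map. Writing $\psi\colon\A/\J\to\BH$ for the injective bounded homomorphism with $\phi=\psi\circ q$, we have $\phi_n=\psi_n\circ q_n$, so the assertion is precisely that $\psi$ is \emph{completely bounded below}: there is $k>0$ with $\|\psi_n(Y)\|\geq k\|Y\|$ for all $n$ and all $Y\in M_n(\A/\J)$. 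Renaming, it suffices to prove: an injective bounded homomorphism $\psi$ of a $C^{*}$-algebra into $\BH$ is completely bounded below.

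At a fixed level $n$ I would argue as follows. By the $C^{*}$-identity $\|Y\|^{2}=\|Y^{*}Y\|$ and the fact that $\psi_n(Y^{*}Y)=\psi_n(Y^{*})\psi_n(Y)$, one has $\|\psi_n(Y)\|\geq\|\psi_n(Y^{*}Y)\|/\|\psi_n(Y^{*})\|$, so it is enough to bound $\|\psi_n(P)\|$ from below for positive $P$ (together with the trivial upper bound $\|\psi_n(Y^{*})\|\leq\|\psi_n\|\,\|Y\|$). For positive $P$ the $C^{*}$-algebra it generates is commutative, so a copy of some $C_0(X)$, and $\psi_n$ restricted to it is an injective bounded homomorphism of a commutative $C^{*}$-algebra. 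By the solution of the similarity problem in the commutative case, such a homomorphism is completely bounded, with completely bounded norm at most the square of its norm, and, being injective, is similar to an isometric $*$-homomorphism $\rho$, say $\psi_n|{=}\,S\rho(\cdot)S^{-1}$; thus $\|\psi_n(P)\|\geq(\|S\|\,\|S^{-1}\|)^{-1}\|\rho(P)\|=(\|S\|\,\|S^{-1}\|)^{-1}\|P\|\geq\|P\|/\|\psi_n\|^{2}$. Combining, $\|\psi_n(Y)\|\geq\|Y\|/\|\psi_n\|^{3}$. At $n=1$ this already gives the level‑one inequality (with $k$ of order $\|\phi\|^{-3}$) and, in particular, shows that $\psi$ has closed range.

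The main obstacle is the passage from each fixed $n$ to a constant independent of $n$: the estimate above produces a constant of order $\|\psi_n\|^{-3}$, so one needs $\sup_n\|\psi_n\|<\infty$, i.e.\ complete boundedness of $\psi$ --- and for a general $C^{*}$-algebra this is not available, being exactly the content of the Kadison similarity problem. My plan to get around this is to avoid the crude bound $\|\psi_n(Y^{*})\|\leq\|\psi_n\|\,\|Y\|$ altogether and to work only with the self‑adjoint $2n\times 2n$ amplification of $Y$ (with $Y$ in one off‑diagonal corner and $Y^{*}$ in the other), whose norm is again $\|Y\|$; bounding its image below through the spectral behaviour of $\psi$ on self‑adjoints, the only constants that enter are the similarity constants of the restrictions of $\psi$ to the commutative $C^{*}$-algebras generated by these self‑adjoint amplifications, and the crux then becomes showing that those similarity constants are bounded by a function of $\|\psi\|$ alone, independently of $n$. (Should that fail, one notes that in the intended application $\psi$ is in any case completely bounded, which makes the uniformity automatic.)
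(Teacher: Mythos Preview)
The paper does not prove this theorem; it is quoted from \cite{Pitts} as an external input and used only through its corollary for $\KH$. So there is no ``paper's own proof'' to compare against, and the question is simply whether your argument stands on its own.

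Your reduction to an injective $\psi\colon\A/\J\to\BH$ and the level-by-level estimate are fine. The genuine gap is exactly the one you flag: your bound at level $n$ is $\|\psi_n(Y)\|\geq\|Y\|/\|\psi_n\|^{3}$, and promoting this to a uniform constant requires $\sup_n\|\psi_n\|<\infty$, which is the open Kadison similarity problem. Your proposed workaround does not escape this. Passing to the self-adjoint $2n\times 2n$ amplification $\tilde Y=\begin{pmatrix}0&Y\\ Y^{*}&0\end{pmatrix}$ and restricting $\psi_{2n}$ to the commutative $C^{*}$-algebra $C^{*}(\tilde Y)\subset M_{2n}(\A/\J)$ still leaves you with a homomorphism whose norm is a priori bounded only by $\|\psi_{2n}\|$: the elements of $C^{*}(\tilde Y)$ are continuous functions of $\tilde Y$, and there is no reason $\|\psi_{2n}(f(\tilde Y))\|$ should be controlled by $\|\psi\|$ rather than $\|\psi_{2n}\|$. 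So the similarity constants you invoke are, as written, still $n$-dependent, and the argument is circular at precisely the point where uniformity is needed.

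Your parenthetical is the honest resolution for the paper's purposes: in the only place the result is applied, the domain is $\KH$, which is nuclear, so every bounded homomorphism is completely bounded (indeed similar to a $*$-representation), and then your level-$n$ estimate with $\|\psi_n\|\leq\|\psi\|_{cb}$ gives a uniform $k$ immediately. That is a correct proof of the corollary actually used, even if it does not recover Pitts' theorem in full generality.
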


\begin{cor}
	If $\phi: \KH \rightarrow \BH$ is a bounded monomorphism, then there exists a real number $k$ such that
	\[
	k \|R\| \leq \|\phi(R)\|.
	\]
	That is, $\phi$ has closed range.
\end{cor}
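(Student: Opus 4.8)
The plan is to obtain this as an immediate specialization of the theorem of Pitts quoted just above, using only the hypothesis that $\phi$ is injective. First I would apply that theorem with $\A = \KH$: it furnishes a constant $k > 0$ such that $k\,\mbox{dist}(R, M_n(\J)) \leq \|\phi_n(R)\|$ for every $n \in \mathbb{N}$ and every $R \in M_n(\KH)$, where $\J = \ker\phi$. Since $\phi$ is a monomorphism, $\J = \{0\}$, hence $M_n(\J) = \{0\}$ for all $n$, and in particular $\mbox{dist}(R, M_1(\J)) = \|R\|$ for every $R \in \KH$. Taking $n = 1$ in the inequality gives exactly $k\|R\| \leq \|\phi(R)\|$, which is the asserted lower bound.

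It then remains only to translate this lower bound into the statement that $\phi$ has closed range, which is a standard fact about bounded-below linear maps on complete spaces. Concretely, suppose $(R_j)$ is a sequence in $\KH$ with $\phi(R_j) \to S$ in $\BH$. Then $(\phi(R_j))$ is Cauchy, and from $k\|R_i - R_j\| \leq \|\phi(R_i) - \phi(R_j)\|$ we conclude that $(R_j)$ is Cauchy in $\KH$; as $\KH$ is complete, $R_j \to R$ for some $R \in \KH$, and continuity of $\phi$ forces $\phi(R) = S$. Hence $S \in \mbox{ran}(\phi)$, so the range is closed.

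I do not anticipate any real obstacle here: the entire analytic content is carried by Pitts' theorem, which is being cited rather than proved. The only points that merit an explicit (if brief) remark are that the monomorphism hypothesis is precisely what makes the distance term collapse to $\|R\|$, and that a continuous linear injection that is bounded below and has complete domain automatically has closed range. I would write out just these two observations and leave the rest as the one-line deductions above.
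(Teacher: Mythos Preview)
Your proposal is correct and matches the paper's approach: the paper states this as an immediate corollary of Pitts' theorem with no separate proof, and your argument is exactly the intended specialization (take $\A = \KH$, use injectivity to make $\J = 0$, set $n=1$). The additional sentence verifying that bounded-below implies closed range is a reasonable elaboration of something the paper leaves implicit.
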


% Given an invertible operator $V \in \BH$, we define $\Adv:\BH \rightarrow \BH$ via $\Adv(T) = V T V^{-1}$. As previously mentioned, to fully analyze $\A_T$, we need to determine which category we are working in.  On the one hand, we can view $\A_T$ as an operator algebra, with our morphisms being completely bounded homomorphisms.  On the other hand, we may want to simply view $\A_T$ as a Banach algebra, where the morphisms are bounded homomorphisms. Fortunately, Theorem \ref{compact} forces the monomorphisms of these two categories to coincide:
We are now in a position to prove the main theorem of the paper.  Given an invertible operator $V \in \BH$, we define $\Adv:\BH \rightarrow \BH$ via $\Adv(T) = V T V^{-1}$.

\begin{thmy}
	\label{similar}
	Let $T_i$, $i=1,2$ be left invertibles (analytic with $\mbox{\ind}(T_i) = -1$) and $\A_i = \A_{T_i}$.  Suppose that  $\phi:\A_1 \rightarrow \A_2$ is a bounded isomorphism.  Then $\phi = \Adv$ for some invertible $V \in \BH$. 
\end{thmy}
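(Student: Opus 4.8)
The plan is to exploit the fact that $\KH$ is contained in both $\A_1$ and $\A_2$ as the minimal closed ideal (Proposition \ref{essential}), together with the rigidity of automorphisms of $\KH$. First I would show that $\phi$ carries $\KH$ onto $\KH$. Since $\phi$ is a bounded isomorphism, $\phi^{-1}$ is automatically bounded (by the open mapping theorem, as $\A_1,\A_2$ are Banach algebras), so $\phi(\KH)$ is a closed two-sided ideal of $\A_2$; being isomorphic to $\KH$ it is a simple Banach algebra without a nontrivial closed ideal, hence its intersection with $\KH\subseteq\A_2$ is everything — more precisely, by Proposition \ref{essential} applied inside $\A_2$, $\KH$ is contained in the closed ideal $\phi(\KH)$, and symmetrically $\KH\supseteq\phi(\KH)$ by running the same argument with $\phi^{-1}$. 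Thus $\phi|_{\KH}:\KH\to\KH$ is a bounded algebra automorphism.

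Next I would invoke the classical structure theorem for automorphisms of the compacts: every algebraic automorphism of $\KH$ is spatially implemented, i.e. there is a bounded invertible $V\in\BH$ with $\phi(K)=VKV^{-1}$ for all $K\in\KH$. (This follows because such an automorphism sends rank-one idempotents to rank-one idempotents and minimal left ideals to minimal left ideals; fixing a rank-one projection $\theta_{e,e}$, its image is an idempotent of rank one, and tracking the action on the associated minimal left ideal $\KH\,\theta_{e,e}\cong\HH$ produces the intertwining $V$, which is bounded since $\phi$ is. One must use the bounded-below corollary to Pitts' theorem to see $V$ is bounded below, hence invertible.) Replacing $\phi$ by $\mathrm{Ad}_{V^{-1}}\circ\phi$, we may assume $\phi$ fixes $\KH$ pointwise.

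It then remains to show that an automorphism $\psi:\A_1\to\A_2$ which is the identity on $\KH$ must itself be the identity (equivalently $\A_1=\A_2$ and $\psi=\mathrm{id}$). Here I would use that $\KH$ is essential: for $A\in\A_1$ and any $K\in\KH$ we have $\psi(AK)=\psi(A)\psi(K)=\psi(A)K$, while also $\psi(AK)=AK$ since $AK\in\KH$ (the compacts are an ideal). Hence $(\psi(A)-A)K=0$ for every compact $K$, and since $\psi(A)-A\in\BH$ annihilates all of $\KH$, essentiality (the "$A\KH=0\Rightarrow A=0$" form in Proposition \ref{essential}, valid in $\BH$ as well) forces $\psi(A)=A$. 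Therefore $\psi=\mathrm{id}$ on $\A_1$, so $\A_1\subseteq\A_2$; the symmetric argument with $\psi^{-1}$ gives equality, and unwinding the reductions yields $\phi=\mathrm{Ad}_V$ on $\A_1$.

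The main obstacle is the middle step: verifying that the abstract algebra automorphism of $\KH$ is implemented by a \emph{bounded, invertible} operator $V$, rather than merely a linear bijection of $\HH$. The spatial-implementation theorem for $\KH$ is standard in the purely algebraic category, but one needs the boundedness of $\phi|_{\KH}$ and of its inverse, plus the Pitts corollary (bounded monomorphisms of $\KH$ are bounded below), to conclude $V$ and $V^{-1}$ are both bounded; I would be careful to assemble $V$ as the operator sending $\xi\in\HH\cong\KH\theta_{e,e}$ to $\psi(\theta_{\xi,e})e$ and then check directly that $\|V\xi\|\asymp\|\xi\|$ using these norm estimates.
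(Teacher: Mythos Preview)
Your argument is correct and reaches the same conclusion, but the route differs from the paper's in one substantive way. The paper does \emph{not} first prove $\phi(\KH)=\KH$ via minimality; instead it regards $\phi\mid_{\KH}:\KH\to\BH$ as a bounded representation of a nuclear C*-algebra and invokes Christensen's theorem to conjugate it to a $*$-representation $\psi$. It then argues (using minimality of $\KH$ inside $W^{-1}\A_2 W$ and a rank-one projection argument exploiting positivity of $\psi^{-1}$) that $\psi$ is a $*$-automorphism of $\KH$, hence $\psi=\Adu$ for a unitary $U$, so $\phi\mid_{\KH}=\mathrm{Ad}_{WU}$. The final essentiality step---showing $(\phi(A)-\Adv(A))\KH=0$ forces $\phi(A)=\Adv(A)$---is identical to yours.

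Your approach is more elementary: you use Proposition~\ref{essential} symmetrically (for $\phi$ and $\phi^{-1}$) to get $\phi(\KH)=\KH$ outright, and then implement the resulting bounded \emph{algebra} automorphism of $\KH$ spatially by hand via minimal left ideals, bypassing Christensen's similarity theorem entirely. This buys self-containment at the cost of having to verify directly that the intertwiner $V$ is bounded and boundedly invertible---which, as you note, follows from boundedness of $\phi$ and $\phi^{-1}$ (the Pitts corollary is not strictly needed here, since $\phi^{-1}$ is already bounded by the open mapping theorem). The paper's route trades that verification for an appeal to a deep external result, after which the spatial implementation is the standard unitary one for $*$-automorphisms of $\KH$.
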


\begin{proof}
	Let $\phi:\A_1 \rightarrow \A_2$ be a bounded isomorphism.  A brief outline of the proof is as follows.  We first show that $\phi\mid_{\KH}$ is similar to a *-automorphism of $\KH$.  It is well known that all *-automorphisms of $\KH$ have the form $\Adu$ for some unitary operator $U$.  We then use the fact that $\phi$ restricted to an essential ideal has the form $\Adv$ to conclude that it must be equal to $\Adv$ on all of $\A_1$. The details are as follows.  
	
	Note that $\phi\mid_{\KH}: \KH \rightarrow \A_2 \subset \BH$ is a bounded representation of the compact operators. It can be shown that every bounded representation of the compact operators is similar to a *-representation (more generally, every bounded representation of a nuclear C*-algebra is similar to a *-representation \cite{Christensen}).  Let $W \in \BH$ be the invertible that conjugates $\phi\mid_{\KH}$ to a *-representation $\psi$. That is, $\phi(u) = W \psi(u) W^{-1}$ for every $u \in \KH$.  
	
	Now let us consider the $*-$representation $\psi$.  Note that $\psi:\KH \rightarrow W^{-1} \A_2 W$. The map $\mbox{Ad}_{W^{-1}}: \A_2 \rightarrow W^{-1} \A_2 W$ carries $\KH$ to $\KH$.  Since every ideal of $W^{-1} \A_2 W$ has the form $W^{-1} \J W $ for $\J$ an ideal of $\A_2$, it follows that $\KH$ is minimal in $W^{-1} \A_2 W$.  Therefore, we must have that $\KH \subseteq \psi(\KH)$. 
	
	Now, $\KH$ is equal to the closed span of the rank one  projections on $\HH$.  As a result, if we can show that each rank one projection $p$ gets sent to another rank one projection under $\psi$, then $\psi(\KH) \subset \KH$, yielding equality. 
	
	To this end, let $p$ be a rank one projection, and $p' = \psi(p)$. If $p'$ is not rank one, then there exists a non-zero projection $q'$ properly contained under $p'$.  Since $\psi(\KH)$ contains $\KH$, there exists a projection $q \in \KH$ such that $\psi(q) = q'$.  Regarding $\psi$ mapping from $\KH$ to $\psi(\KH)$, $\psi$ is a *-isomorphism and hence invertible.  $\psi^{-1}$ is of course also a *-isomorphism, and therefore a positive map. Hence, if $q' < p'$, then $q < p$ by positivity of $\psi^{-1}$. This is absurd, since $p$ was rank one. Thus, $\psi(\KH) \subset \KH$, so that $\KH = \psi(\KH)$. 
	
	What we have just shown is that $\phi\mid_{\KH}$ is similar to a *-automorphism $\psi$ of $\KH$.  Every *-automorphism of $\KH$ is of the form $\mbox{Ad}_U$ for some unitary operator $U$.  Hence, we have that 
	\[
	\phi\mid_{\KH} = \Adw \psi = \Adw \mbox{Ad}_U = \mbox{Ad}_{V}
	\]
	where $V = UW$. We now show that $\phi = \Adv$.  To do this, first  note that for all $A \in \A_1$ and $K \in \KH$, 
	\[
	\phi(A) \phi(K) = \phi(AK) = \psi(AK) =  \Adv(AK) = \Adv(A) \Adv(K) = \Adv(A) \phi(K)
	\]
	So it follows that 
	\[
	(\phi(A) - \Adv(A)) \Adv(K) = 0
	\]
	for each $K \in \KH$. Cycling over all $K \in \KH$, we see that 
	\[
	(\phi(A) - \Adv(A) )\KH = 0.
	\]
	Since $\KH$ is essential in $\A_2$, we have that $\phi(A) = \Adv(A)$. 
\end{proof}

Theorem \ref{similar} is a harsh rigidity statement about classification.  Indeed, $\A_1$ is boundedly isomorphic to $\A_2$ if and only if the algebras are similar.  Consequently, if we wish to delineate these operator algebras into isomorphism classes, we need to understand the similarity orbit of left invertible operators. We define the following notation for the similarity orbit:
\[
\mathcal{S}(T):= \{V T V^{-1}: V \in \BH \mbox{ is invertible} \}.
\]

In classifying the algebra $\A_T$,  we do not need to keep track of the similarity orbit of the Moore-Penrose inverse.  Indeed, suppose $T$ is left invertible with Moore-Penrose inverse $T^\dagger$,  $V$ is an invertible operator, and $T_2:= V T V^{-1}$.  Then $L_2:= V T^{\dagger} V^{-1}$ is a left inverse of $T_2$.  By Corollary \ref{any left}, $\overline{\mbox{\Alg}}(T_2,L_2) = \A_{T_2}$. Therefore to identify the isomorphism class of $\A_T$, we may disregard $\mathcal{S}(T^\dagger)$.  Hence, we pose the following question:

\begin{question}
	If $T$ is left invertible (analytic, $\mbox{\ind}(T) = -1$), what is $\mathcal{S}(T)$?
\end{question}

In general, it is impossible to completely classify the similarity orbit of an operator.  However, analytic left invertible operators have added structure that aid in this analysis.  By Theorem \ref{CD}, if $T$ is analytic,  $T^* \in B_n(\Omega)$ for a  disc $\Omega$ centered at the origin.   Clearly if we could identify $\mathcal{S}(T^*)$, then we would know $\mathcal{S}(T)$.  Fortunately, similarity orbits of Cowen-Douglas operators have been extensively studied  \cite{Cowen2} \cite{Curto} \cite{Jiang2} \cite{Li}  \cite{Zhu}.  The similarity orbit of Cowen-Douglas operators can be completely described by K-theoretic means.  We will highlight these results in the next section.

While the question of addressing the similarity orbit is paramount to a complete classification of our algebras $\A_T$, it is  not sufficient.  Explicitly, suppose $T_1$ and $T_2$ are left invertible operators (analytic, $\mbox{\ind}(T) = -1$) with $\A_1$ and $\A_2$ isomorphic.  Let $V$ be the invertible that implements the isomorphism between $\A_1$ and $\A_2$, and let $T_3:= V T_1 V^{-1}$ and $L_3:= V T_1^{\dagger} V^{-1}$. Notice $L_3$ is a left inverse of $T_3$ and that $\overline{\mbox{\Alg}}(T_3, L_3) = \A_2$.  By Corollary \ref{any left}, $\A_3 = \A_2$.  

One would therefore be tempted to reduce to the case where $T_2 = T_3 = \Adv(T_1)$. However, it turns out that not every left invertible $S \in \A_T$ will satisfy $\A_{S} = \A_T$.  Consider the following example:

\begin{example}
	We will  construct a left invertible operator $T$ inside the Toeplitz algebra $\mathcal{T}$ such that $\A_T \neq \mathcal{T}$.  Consider the Hardy space $H^2(\mathbb{T})$.  Let $\phi_0 \in C(\mathbb{T})$ be given by 
	\[
	\phi_0(z) := \mbox{exp}\left( \frac{\pi i}{2} (z-1)z \right)
	\]
	for all $z \in \mathbb{T}$.  Then $\phi_0(1) = 1$ and $\phi_0(-1) = -1$.  Let 
	\[
	\epsilon_n(z) = z^n. 
	\]
	 Define $\phi := M_{\epsilon_1} \phi_0$.  Then $\phi$ satisfies $\phi(1) = \phi(-1) = 1$. 
	 
	 %Recall the following facts about invertible functions on $C(\mathbb{T})$ and their associated Toeplitz operators:
	
% 	\begin{thm}[\cite{Murphy} Lem. 3.5.14, Thm. 3.5.15]
% 		\label{ThmMurphy}
% 		Let $\phi \in C(\mathbb{T})$ be invertible. Then
		
% 		\begin{enumerate}[i.]
% 			\item There exists a unique integer $n$ such that $\phi = \epsilon_n e^\psi$ some $\psi \in C(\mathbb{T})$
% 			\item If  $\phi = \epsilon_n e^\psi$, then the winding number is $n$
% 			\item We have $\mbox{\ind}(T_\phi) = $ negative the winding number of $\phi$
% 			\item $T_\phi$ is invertible if and only if the winding number is zero if and only if $\phi = e^\psi$ some $\psi \in C(\mathbb{T})$
% 		\end{enumerate}

% 	\end{thm}
	The winding number of $\phi$ is $1$, so $\mbox{\ind}(T_\phi) = -1$.  Since both $\epsilon_1$ and $\phi_0$ belong to $ H^\infty(\mathbb{T})$  we have that $T_{\epsilon_1}$ and $T_{\phi_0}$ commute, so the Toeplitz operator $T_\phi$ factors:
	\[
	T_\phi =   T_{\epsilon_1 \phi_0} = T_{\epsilon_1} T_{\phi_0}.
	\]
	As $z(z-1)$ is continuous on $\mathbb{T}$, $T_{\phi_0}$ is invertible.  The point-wise inverse of $\phi_0$ is also continuous on $\mathbb{T}$. Therefore, the Toeplitz operator $T_\phi$ is left invertible with left inverse 
	\[
	L= {T_{\phi_0}}^{-1} T_{\epsilon_1}^*  = T_{{\phi_0}^{-1}} T_{\epsilon_1}^* \in \mathcal{T}.
	\]
	Moreover, since $T_{\epsilon_1}$ and $T_{\phi_0}$ commute, we have $(T_\phi)^n = T_{\epsilon_n} T_{{\phi_0}^n}$.  Since  $T_{{\phi_0}^n}$ is invertible, $T_{{\phi_0}^n} H^2(\mathbb{T}) = H^2(\mathbb{T})$.  Consequently,
	\[
	\bigcap {T_\phi}^n H^2(\mathbb{T}) = \bigcap T_{\epsilon_n} H^2(\mathbb{T}) = 0
	\]
	so $T_\phi$ is analytic.  Recall that  $\A_T\subset C^*(T)$ for any left invertible $T$. We  remark that $C^*(T_\phi) \neq \mathcal{T}$.  This follows from the following result due to Coburn:
	
	\begin{lemma}[\cite{Coburn2} Cor. 6.3]
		If $\phi$ is in the disc algebra, then $C^*(T_\phi) = \mathcal{T}$ if and only if $\phi$ is injective.
	\end{lemma}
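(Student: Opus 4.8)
The plan is to exploit the structure of the Toeplitz algebra as an extension of $C(\mathbb{T})$ by the compacts: there is a $*$-homomorphism (the symbol map) $\sigma\colon\mathcal{T}\to C(\mathbb{T})$ with $\ker\sigma=\KH$ and $\sigma(T_f)=f$ for every $f$ continuous on $\mathbb{T}$, and every Toeplitz operator with continuous symbol lies in $\mathcal{T}$. In particular $C^*(T_\phi)\subseteq\mathcal{T}$ always, so the content is the reverse inclusion, and I would split the proof as follows: for $(\Rightarrow)$, obstruct $C^*(T_\phi)=\mathcal{T}$ from the quotient when $\phi$ fails to be injective; for $(\Leftarrow)$, produce $T_z$ inside $C^*(T_\phi)$ directly when $\phi$ is injective.

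For $(\Rightarrow)$ I would argue by contraposition, first noting that for $\phi$ in the disc algebra injectivity on $\mathbb{T}$ is equivalent to injectivity on $\overline{\mathbb{D}}$: if $\phi$ is injective on $\mathbb{T}$ then $\phi(\mathbb{T})$ is a Jordan curve, the argument principle forces each point of the bounded complementary region to have exactly one $\phi$-preimage in $\mathbb{D}$ and each point of the unbounded region none, and the open mapping theorem rules out $\phi$ attaining a value of $\phi(\mathbb{T})$ from inside $\mathbb{D}$. So if $\phi$ is not injective there are $\alpha\neq\beta$ in $\mathbb{T}$ with $\phi(\alpha)=\phi(\beta)$. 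Applying $\sigma$, the image $\sigma(C^*(T_\phi))$ is the unital $C^*$-subalgebra of $C(\mathbb{T})$ generated by $\phi|_{\mathbb{T}}$ and $\overline{\phi|_{\mathbb{T}}}$, which is contained in $\{g\in C(\mathbb{T}):g(\alpha)=g(\beta)\}\subsetneq C(\mathbb{T})=\sigma(\mathcal{T})$; hence $C^*(T_\phi)\subsetneq\mathcal{T}$.

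For $(\Leftarrow)$ I would assume $\phi$ injective, so that $\phi$ is a homeomorphism of $\overline{\mathbb{D}}$ onto the compact set $\overline{\Omega}:=\phi(\overline{\mathbb{D}})$, whose interior is $\Omega:=\phi(\mathbb{D})$ (open mapping theorem) and whose complement in $\C$ is the unbounded component of $\C\setminus\phi(\mathbb{T})$, in particular connected. The inverse $\psi:=\phi^{-1}$ is continuous on $\overline{\Omega}$ and holomorphic on $\Omega$, so Mergelyan's theorem gives polynomials $p_n$ with $p_n\to\psi$ uniformly on $\overline{\Omega}$. Then $q_n:=p_n\circ\phi$ lies in the disc algebra and $q_n\to\psi\circ\phi=\mathrm{id}$ uniformly on $\overline{\mathbb{D}}$, hence $\|q_n|_{\mathbb{T}}-z\|_\infty\to0$. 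Since $f\mapsto T_f$ is multiplicative and isometric on $H^\infty(\mathbb{T})$, we get $T_{q_n}=p_n(T_\phi)\in C^*(T_\phi)$ and $\|T_{q_n}-T_z\|=\|q_n-z\|_\infty\to0$, so $T_z\in C^*(T_\phi)$. Therefore $\mathcal{T}=C^*(T_z)\subseteq C^*(T_\phi)\subseteq\mathcal{T}$, giving equality.

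The main obstacle is the $(\Leftarrow)$ direction, specifically verifying that $\phi(\overline{\mathbb{D}})$ is well enough behaved — complement connected, interior equal to $\phi(\mathbb{D})$, boundary equal to $\phi(\mathbb{T})$ — for Mergelyan's theorem to apply; these are standard facts about the boundary behaviour of univalent functions in the disc algebra, but they are what makes the argument go through. A Mergelyan-free alternative would be to show $C^*(T_\phi)$ contains a nonzero compact operator (its self-commutator $[T_\phi^*,T_\phi]$ equals $H_{\bar\phi}^*H_{\bar\phi}$, which is compact because $\bar\phi\in C(\mathbb{T})\subseteq H^\infty+C(\mathbb{T})$ by Hartman's theorem and nonzero because $\phi$ is nonconstant) and that $C^*(T_\phi)$ is irreducible, whence it contains all of $\KH$; combined with $\sigma(C^*(T_\phi))=C(\mathbb{T})$ this again forces $C^*(T_\phi)=\sigma^{-1}(C(\mathbb{T}))=\mathcal{T}$ — at the cost of proving irreducibility of $C^*(T_\phi)$ instead.
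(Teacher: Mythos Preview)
The paper does not prove this lemma; it is quoted from Coburn and only accompanied by the remark that $C^*(T_\phi)/\KH\cong C(\mathbb{T}/\!\sim)$, where $\sim$ identifies points with the same $\phi$-value. So the implicit argument is: $C^*(T_\phi)$ always contains $\KH$ (irreducible, nonzero compact commutator), and then $C^*(T_\phi)=\mathcal{T}$ iff the quotient is all of $C(\mathbb{T})$, iff $\sim$ is trivial, iff $\phi$ is injective on $\mathbb{T}$.

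Your proof is correct, and for $(\Rightarrow)$ it is essentially this quotient argument. For $(\Leftarrow)$, however, your primary route is genuinely different: rather than first establishing $\KH\subseteq C^*(T_\phi)$ and then arguing on the quotient, you bypass the compacts entirely and manufacture $T_z$ directly as a norm limit of polynomials in $T_\phi$ via Mergelyan applied to $\phi^{-1}$ on $\phi(\overline{\mathbb{D}})$. This is a nice constructive shortcut --- it gives an explicit approximation of the generator of $\mathcal{T}$ --- at the cost of invoking Mergelyan and the boundary correspondence for univalent disc-algebra functions. Your ``Mergelyan-free alternative'' is exactly the Coburn approach the paper points to, so you have in fact recovered both arguments.
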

	It is shown in \cite{Coburn2} that  $C^*(T_\phi)/\KH$ is isomorphic to continuous functions on $\mathbb{T}/\sim$, where $\sim$ is an equivalence relation identifying all points $z,w \in \mathbb{T}$ such that $\phi(z) = \phi(w)$.  Since $\phi(1) = \phi(-1)$, it follows by the above lemma that $\A_T \subseteq C^*(T_\phi) \neq \mathcal{T}$. This concludes our example.
	
\end{example}

What the above example demonstrates is that not every left invertible operator in $\A_T$ generates $\A_T$. Therefore, determining the similarity orbit is not sufficient to delineate the isomorphism classes of $\A_T$.    Concretely, suppose  $\A_1$ and $\A_2$ are generated by $T_1$ and $T_2$ respectively. To  determine if $\A_1$ is isomorphic to $\A_2$, it is not sufficient to verify that $\A_2$ possesses an operator $T_3$ similar to $T_1$. This would demonstrate that $\A_1$ is isomorphic to a subalgebra of $\A_2$.  If one wanted $\A_1$ to be isomorphic to $\A_2$,  it is necessary to show that $T_3$ also generates $\A_2$.  With this caveat emphasized, we spend the next section investigating the similarity orbit of our class of left invertible operators.

%%%%%%%%%%%%%%%%%%%%%%%%%%%%%%%%%%%%%%%%%%%%%%%%%%%%%%%%%%
%%%%%%%%%%%%%%%%%%%%%%%%%%%%%%%%%%%%%%%%%%%%%%%%%%%%%%%%%%
%%%%%%%%%%%%%%%%%%%%%%%%%%%%%%%%%%%%%%%%%%%%%%%%%%%%%%%%%%
%%%%%%%%%%%%%%%%%%%%%%%%%%%%%%%%%%%%%%%%%%%%%%%%%%%%%%%%%%
\subsection{The Similarity Orbit of $T$ and $K_0(\{T\}')$}
%%%%%%%%%%%%%%%%%%%%%%%%%%%%%%%%%%%%%%%%%%%%%%%%%%%%%%%%%%
%%%%%%%%%%%%%%%%%%%%%%%%%%%%%%%%%%%%%%%%%%%%%%%%%%%%%%%%%%
%%%%%%%%%%%%%%%%%%%%%%%%%%%%%%%%%%%%%%%%%%%%%%%%%%%%%%%%%%
%%%%%%%%%%%%%%%%%%%%%%%%%%%%%%%%%%%%%%%%%%%%%%%%%%%%%%%%%%

If $T$ is an analytic left invertible operator with $\mbox{\ind}(T)  = -1$, then by Theorem \ref{CD}, $T^* \in B_1(\Omega)$ for $\Omega = \{\lambda: |\lambda|< \epsilon\}$.  Therefore, classifying $\mathcal{S}(T)$ is equivalent to classifying the similarity orbit of Cowen-Douglas operators over a small disc centered at the origin.   The problem of identifying when two Cowen-Douglas operators are similar is a classic one.  In Cowen and Douglas' original work, they show that two operators $R_1, R_2 \in B_1(\Omega)$ are unitarily equivalent if and only if the curvature on the associated hermitian holomorphic vector bundles are equal \cite{Cowen2}.  Cowen and Douglas did not find a similarity classification however.  They asked what is a complete similarity invariant of $B_1(\Omega)$, and more generally, $B_n(\Omega)$. Various authors subsequently worked on this problem, successfully describing the similarity orbit of Cowen-Douglas operators in terms of K-theory.

In \cite{Jiang4}, Jiang describes the similarity orbit of strongly irreducible Cowen-Douglas operators using the $K_0$-group of the commutant algebra.  Later, Jiang, Guo, and Ji gave a similarity classification of all Cowen-Douglas operators using the commutant \cite{Jiang3}. For a summary of these results, and how they connect to the theory of left invertible operators, see \cite{DeSantis}.  The particular result of interest for the Fredholm index $-1$ case is as follows:

\begin{thm}[\cite{Jiang3} - Proposition 5.1.7]
	\label{Jiang_sim}
	Let $A,B \in B_1(\Omega)$.  Then $A$ is similar to $B$ if and only if 
	\[
	K_0(\{A \oplus B\}') \cong \mathbb{Z}.
	\]
\end{thm}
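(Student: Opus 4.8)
The plan is to derive this from the general $K$-theoretic classification of operators possessing a unique strongly irreducible decomposition, namely Theorem \ref{SI K}, once we record that every operator in $B_1(\Omega)$ is strongly irreducible and that the Cowen--Douglas classes are stable under amplification with unique SI decompositions (Theorem \ref{SI}). The forward implication will be immediate; the converse will follow from an exhaustive two-case analysis.

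First I would assemble the structural facts. Since $A, B \in B_1(\Omega)$, Theorem \ref{SI} gives $A, B \in (SI)$. Set $R := A \oplus B$. Then $R \in B_2(\Omega)$, and for every $n$ we have $R^{(n)} = A^{(n)} \oplus B^{(n)} \in B_{2n}(\Omega)$, so by Theorem \ref{SI} each $R^{(n)}$ has a strongly irreducible decomposition which is unique up to similarity. In particular $R$ satisfies the standing hypothesis of Theorem \ref{SI K}, so $K_0(\{R\}') \cong \mathbb{Z}^l$, where $l$ is the number of distinct similarity classes occurring among the SI summands of $R$.

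Next I would run the case analysis, which is exhaustive since either $A \sim B$ or $A \not\sim B$. If $A \sim B$, then $R \sim A \oplus A = A^{(2)}$; because $A$ is strongly irreducible this is the (unique) SI decomposition of $R$, with a single similarity class repeated twice, so $l = 1$ and Theorem \ref{SI K} yields $K_0(\{R\}') \cong \mathbb{Z}$ (with $h([I]) = 2$). If $A \not\sim B$, then $R = A \oplus B$, a direct sum of two mutually non-similar strongly irreducible operators, is already an SI decomposition, and by the uniqueness supplied by Theorem \ref{SI} it is \emph{the} decomposition; hence $l = 2$ and Theorem \ref{SI K} gives $K_0(\{R\}') \cong \mathbb{Z}^2$.

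Finally, since $\mathbb{Z}$ and $\mathbb{Z}^2$ are non-isomorphic as abelian groups, the two cases are distinguished by $K_0(\{A \oplus B\}')$: it is isomorphic to $\mathbb{Z}$ exactly when $A \sim B$, which is the claim. The forward implication ($A\sim B \Rightarrow K_0 \cong \mathbb{Z}$) is the trivial half; in the converse the only point requiring care is that when $A \not\sim B$ the obvious decomposition $A \oplus B$ is \emph{forced} to be the SI decomposition, so that $l$ cannot be anything other than $2$, and this is precisely where uniqueness of SI decompositions for $B_2(\Omega)$ is used. The genuine difficulty of the statement is thus entirely inherited from Theorems \ref{SI} and \ref{SI K}, whose proofs are the deep inputs; granting those, the present proposition is a short bookkeeping argument.
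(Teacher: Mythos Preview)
The paper does not supply a proof of this statement: it is quoted directly from \cite{Jiang3} as Proposition 5.1.7 and used as a black box. Your argument correctly derives it from the two other cited results, Theorem \ref{SI} and Theorem \ref{SI K}, by the natural two-case analysis on whether $A\sim B$; the reasoning is sound, and the point you flag about uniqueness of the SI decomposition in $B_2(\Omega)$ forcing $l=2$ when $A\not\sim B$ is exactly the place where Theorem \ref{SI} does the work.
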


\subsection{Example from Subnormal Operators}
%%%%%%%%%%%%%%%%%%%%%%%%%%%%%%%%%%%%%%%%%%%%%%%%%%%%%%%%%%
%%%%%%%%%%%%%%%%%%%%%%%%%%%%%%%%%%%%%%%%%%%%%%%%%%%%%%%%%%
%%%%%%%%%%%%%%%%%%%%%%%%%%%%%%%%%%%%%%%%%%%%%%%%%%%%%%%%%%
%%%%%%%%%%%%%%%%%%%%%%%%%%%%%%%%%%%%%%%%%%%%%%%%%%%%%%%%%%

We now turn to an important class of non-trivial examples of $\A_T$.   These examples will involve the theory of subnormal operators. We recall the definitions of subnormal operators and minimal normal extensions:

\begin{defn}
	An operator $S \in \BH$ is called \textbf{subnormal} if there exists a Hilbert space $\KK$ such that $\KK\supset \HH$ and a normal operator $N \in \mathscr{B}(\mathscr{K})$ such that 
	
	\begin{enumerate}[i.]
		\item $N \HH \subset \HH$
		\item $S = N \mid_{\HH}$
	\end{enumerate}

	\noindent Such a normal operator $N$ is called a \textbf{normal extension} of $S$.  The operator $N$ is said to be a \textbf{minimal normal extension} if $\KK$ has no proper subspace reducing $N$ and containing $\HH$. 
\end{defn}

Any two minimal normal extensions of a subnormal operator $S$ are unitarily equivalent \cite{Conway2}.  Thus, we usually refer to the minimal normal extension, and denote it by $N:= mne(S)$. 

Classic examples of a subnormal operators are the Toeplitz operators $T_f$ on $H^2(\mathbb{T})$ for $f \in L^\infty(\mathbb{T})$. The minimal normal extension is given by $M_f$ on $L^2(\mathbb{T})$ (for $f$ non-constant).  It is not hard to see that all subnormal operators have this form.   We make the following definition:

\begin{defn}
	Let $S \in \BH$ be a subnormal operator, and $N = mne(S) \in \mathscr{B}(\mathscr{K})$.  If $\mu$ is a scalar-valued spectral measure associated to $N$, and $f \in L^\infty(\sigma(N),\mu)$, we define the \textbf{Toeplitz operator} $T_f \in \BH$ via
	\[
	T_f:= P(f(N)) \mid_{\HH}
	\]
	where $P$ is the orthogonal projection of $\KK$ onto $\HH$. 
\end{defn}
In the case when $S$ is the unilateral shift, the above are the Toeplitz operators on $H^2(\mathbb{T})$.  For any subnormal operator $S$, we have that $T_z = S$, and that $T_{\overline{z}^n} T_{z^m} = T_{\overline{z}^n z^m}$.  Consequently, $\{T_f: f \in C(\sigma(N))\} \subset C^*(S)$. We remark that, while the map from $ L^\infty(\sigma(N),\mu)$ to $\BH$ via $f \mapsto T_f$ is positive and norm decreasing,  it is not multiplicative. 

Ultimately, we are interested in algebras of operators generated by left invertible operators.  Salient examples will arise from the subnormal operators, due in large part to their rich spectral theory.  The following is the first useful result in that direction.

\begin{prop}[\cite{Conway2}]
	\label{normal spectrums}
	Let $S$ be a subnormal operator with $N= mne(S)$.  Then the following inclusions hold:
	\[
	\partial \sigma(S) \subseteq \sigma_{ap}(S) \subseteq \sigma_{ap}(N) = \sigma(N) \subseteq \sigma(S)
	\]
	where $\sigma_{ap}(S)$ is the approximate point spectrum of $S$.
	
\end{prop}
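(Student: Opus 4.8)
The plan is to verify the displayed chain of inclusions link by link, from left to right; all but the last link are general operator--theoretic facts, and only $\sigma(N)\subseteq\sigma(S)$ uses subnormality and the minimality of $N$.

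For $\partial\sigma(S)\subseteq\sigma_{ap}(S)$ I would use the standard argument valid for any bounded operator: given $\lambda\in\partial\sigma(S)$, choose $\lambda_n\to\lambda$ in the resolvent set of $S$; since the resolvent norm is unbounded as one approaches $\partial\sigma(S)$ one has $\|(S-\lambda_n)^{-1}\|\to\infty$, so picking unit vectors $y_n$ with $\|(S-\lambda_n)^{-1}y_n\|\to\infty$ and setting $x_n := (S-\lambda_n)^{-1}y_n/\|(S-\lambda_n)^{-1}y_n\|$ gives unit vectors with $\|(S-\lambda_n)x_n\|\to 0$, whence $\|(S-\lambda)x_n\|\le\|(S-\lambda_n)x_n\|+|\lambda-\lambda_n|\to 0$ and $\lambda\in\sigma_{ap}(S)$. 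For $\sigma_{ap}(S)\subseteq\sigma_{ap}(N)$: if unit vectors $x_n\in\HH$ satisfy $(S-\lambda)x_n\to 0$, then since $x_n\in\HH\subseteq\KK$ and $S=N\mid_{\HH}$ also $(N-\lambda)x_n=(S-\lambda)x_n\to 0$, so $\lambda\in\sigma_{ap}(N)$. For $\sigma_{ap}(N)=\sigma(N)$ I would invoke the fact that a normal operator has no residual spectrum: via the spectral theorem write $N\cong M_z$ on $L^2(\mu)$ with $\mathrm{supp}(\mu)=\sigma(N)$, and for $\lambda\in\sigma(N)$ note that the balls $B(\lambda,1/n)$ have positive $\mu$--measure, so the normalized indicators $\mu(B(\lambda,1/n))^{-1/2}\chi_{B(\lambda,1/n)}$ are unit vectors on which $M_z-\lambda$ has norm at most $1/n$; alternatively $\|(N-\lambda)x\|=\|(N-\lambda)^{*}x\|$ shows directly that $N-\lambda$ is bounded below if and only if it is invertible.

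The remaining inclusion $\sigma(N)\subseteq\sigma(S)$ is the substantive part, and it is exactly here that minimality is indispensable --- it fails for non--minimal normal extensions (e.g. a bilateral shift direct--summed with an unrelated normal operator). The approach is to fix $\lambda_0\notin\sigma(S)$ and argue that $N-\lambda_0$ is invertible. The key sublemma is that whenever $\lambda$ lies outside both $\sigma(S)$ and $\sigma(N)$, the subspace $\HH$ is invariant for $(N-\lambda)^{-1}$ with $(N-\lambda)^{-1}\mid_{\HH}=(S-\lambda)^{-1}$, since $(S-\lambda)^{-1}h\in\HH$ and $(N-\lambda)(S-\lambda)^{-1}h=(S-\lambda)(S-\lambda)^{-1}h=h$. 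Consequently, for $h\in\HH$ and $k\perp\HH$, the scalar function $\lambda\mapsto\langle(N-\lambda)^{-1}h,k\rangle$, holomorphic on each component $G$ of $\C\setminus\sigma(N)$, vanishes on $G\setminus\sigma(S)$; by the identity theorem $\C\setminus\sigma(S)$ meets each component of $\C\setminus\sigma(N)$ in a relatively clopen set, so each component of $\C\setminus\sigma(N)$ is either contained in $\sigma(S)$ or disjoint from it. Converting this dichotomy into $\sigma(N)\subseteq\sigma(S)$ --- in particular controlling interior points of $\sigma(N)$, about which the dichotomy says nothing directly --- is where minimality does genuine work, via the characterization $\KK=\bigvee_{n\ge 0}N^{*n}\HH$ together with a bounded--below estimate; this is precisely Conway's classical theorem on spectra of minimal normal extensions, whose details I would cite from \cite{Conway2}. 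The main obstacle is exactly this last step: the first three links are routine, whereas upgrading the clopen dichotomy to the true containment is the one place the minimality hypothesis is essential.
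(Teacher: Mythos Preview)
The paper does not supply a proof of this proposition at all; it simply cites the result from Conway \cite{Conway2}. So there is nothing to compare your argument against in the paper itself.

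Your treatment of the first three links is correct and standard. One small comment on the last link: the holomorphic dichotomy you sketch (that each component of $\C\setminus\sigma(N)$ is either contained in $\sigma(S)$ or disjoint from it) is really the argument for the companion fact that $\sigma(S)$ is obtained from $\sigma(N)$ by filling in some bounded components of $\C\setminus\sigma(N)$; it does not by itself touch points of $\sigma(N)$, as you correctly note. The direct route to $\sigma(N)\subseteq\sigma(S)$ in Conway is more elementary: given $\lambda\notin\sigma(S)$, one uses minimality in the form $\KK=\bigvee_{n\ge 0}N^{*n}\HH$ to show that $N-\lambda$ is bounded below (hence invertible, since $N$ is normal), by extending $(S-\lambda)^{-1}$ along the dense set $\sum N^{*n_j}h_j$. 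Since you explicitly defer this step to \cite{Conway2} anyway, your write-up is acceptable, but you may want to replace the dichotomy paragraph with a pointer to that more direct argument, since the holomorphic machinery is not actually what closes the gap.
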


Next we highlight some C*-algebraic results about subnormal operators due to Olin, Thomson, Keough and McGuire. If $N$ is a normal operator, there is a natural identification of $C^*(N)$ with $C(\sigma(N))$ given by the Gelfand transform.  There is also an intimate connection between the C*-algebra generated by a subnormal operator $S$ and its minimal normal extension $N$.  

When $S$ is the unilateral shift, its minimal normal extension $N$ is a unitary.  The commutative C*-algebra $C^*(N) \cong C^*(\sigma(N)) \cong C(\mathbb{T})$ appears in the symbols of the Toeplitz operators. Being a subnormal operator, by definition $S$ dilates to a normal operator.  The unilateral shift also has the additional property the image of $S$ in the Calkin algebra is normal (in fact, unitary). Recall that an operator $S \in \BH$ is called \textit{essentially normal} if its image $\pi(S)$ is normal in the Calkin algebra $\BH/\KH$.

In summary, three key properties that the unilateral shift possesses are irreducibility, sub-normality and essential normality. If $S$ is any operator with these three properties, one obtains a construction similar to the Toeplitz algebra.  It is helpful to view the following theorem with Proposition \ref{normal spectrums} in mind.

\begin{thm}[ \cite{Keough} \cite{McGuire} \cite{Olin} ]
	\label{C-sub}
	If $S$ is an irreducible, subnormal, essentially normal operator, then
	
	\begin{enumerate}[i.]
		\item $\sigma_{ap}(S) = \sigma_e(S)$
		\item For each $f, g \in C(\sigma(N))$, we have
		\begin{enumerate}[a.]
			\item $T_f \in \KH$ if and only if $f$ vanishes on $\sigma_e(S)$
			\item $\|T_f + \KH\| = \|f\|_{\sigma_e(S)}$
			\item $T_{fg} - T_f T_g \in \KH$
			\item $\sigma_e(T_f) = f(\sigma_e(S))$
		\end{enumerate}
		\item Every element of $C^*(S)$ can be written as a sum of a Toeplitz operator and compact:
		\[
		C^*(S) = \{T_f + K : f \in C(\sigma(N)), K \in \KH\}.
		\]
		Moreover, if $\sigma(N) = \sigma_{ap}(S)$, then each element has $A \in C^*(S)$ has a unique representation of the form $T_f + K$.  If $\sigma(N) \neq \sigma_{ap}(S)$, $A$ may be expressed as $A = T_{f_1} + K_1 = T_{f_2} + K_2$, where $f_1\mid_{\sigma_e(S)} = f_2 \mid_{\sigma_e(S)}$.
	\end{enumerate}

\end{thm}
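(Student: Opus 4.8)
The plan is to build everything on the symbol calculus $f \mapsto T_f$ and its behaviour modulo the compacts, proving (ii) first, then (iii), and finally (i), which I expect to be the delicate point. The starting data are: $S$ is essentially normal, so $S^*S - SS^* \in \KH$; $T_z = S$ and $T_{\bar z} = S^*$ (these symbols being analytic or co-analytic for $N$); and $T_{\bar z^n}T_{z^m} = T_{\bar z^n z^m}$ holds \emph{exactly}, because $N^m\HH \subseteq \HH$. First I would establish (ii)(c). A telescoping identity writes the commutator $[(S^*)^k, S^l]$ as a finite sum of terms each carrying a factor $S^*S - SS^*$, so it is compact; feeding this and the exact relation above into $T_{z^j\bar z^k}T_{z^l\bar z^m} = S^j[(S^*)^k S^l]S^{*m} + (\text{cpt})$ shows $T_{pq} - T_p T_q \in \KH$ for all Laurent polynomials $p,q$ in $z,\bar z$. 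Since such polynomials are dense in $C(\sigma(N))$ by Stone--Weierstrass (here $\sigma(N) \subseteq \C$ is compact), since $f \mapsto T_f$ is linear and norm-decreasing, and since $\KH$ is closed, (ii)(c) follows for all $f,g \in C(\sigma(N))$.

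Next, consider $\Phi : C(\sigma(N)) \to \BH/\KH$, $\Phi(f) = \pi(T_f)$. By (ii)(c), together with $T_{\bar f} = T_f^*$ and linearity, $\Phi$ is a unital $*$-homomorphism of commutative C*-algebras; its image has closed range and is generated by the single normal element $\pi(S) = \Phi(z)$, hence equals $\pi(C^*(S))$ and is $*$-isomorphic to $C(\sigma_e(S))$ with $\pi(S)$ corresponding to the coordinate function. Checking on Laurent polynomials and passing to the closure identifies $\ker\Phi$ with the functions vanishing on $\sigma_e(S)$ --- this is (ii)(a), where one invokes the standard fact from subnormal operator theory that $\sigma_e(S) \subseteq \sigma(N)$, so that restriction to $\sigma_e(S)$ makes sense. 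The induced $C(\sigma_e(S)) \hookrightarrow \BH/\KH$ is then an isometric $*$-monomorphism, giving (ii)(b), $\|T_f + \KH\| = \|\pi(T_f)\| = \|f\|_{\sigma_e(S)}$, and spectral mapping for the normal element $\pi(T_f) = f(\pi(S))$ gives (ii)(d), $\sigma_e(T_f) = f(\sigma_e(S))$. For (iii): given $A \in C^*(S)$, we have $\pi(A) \in \pi(C^*(S)) \cong C(\sigma_e(S))$, so $\pi(A) = \pi(T_f)$ for $f \in C(\sigma(N))$ any Tietze extension of the corresponding continuous function on $\sigma_e(S)$; then $A - T_f \in \KH$, so $C^*(S) = \{T_f + K\}$. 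By (ii)(a), two such representations differ by a $T_g \in \KH$, i.e. a $g$ vanishing on $\sigma_e(S)$; if $\sigma(N) = \sigma_{ap}(S)$ then, using (i), $\sigma(N) = \sigma_e(S)$, so $g \equiv 0$ on $\sigma(N)$ and the representation is unique, while in general $g$ is only forced to vanish on $\sigma_e(S)$, which is the final clause.

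It remains to prove (i), $\sigma_{ap}(S) = \sigma_e(S)$, and this is the step I expect to be the main obstacle, since the coarse inclusions of Proposition \ref{normal spectrums} do not by themselves close the gap. For $\sigma_{ap}(S) \subseteq \sigma_e(S)$: if $\lambda \in \sigma_{ap}(S) \setminus \sigma_e(S)$ then $S - \lambda$ is Fredholm but not bounded below, hence $\ker(S-\lambda) \neq 0$; since $N\HH \subseteq \HH$, any such eigenvector lies in $\ker(N-\lambda)$, a nonzero subspace reducing $N$ and meeting $\HH$, and one then plays minimality of the normal extension against irreducibility of $S$ to reach a contradiction. For the reverse inclusion $\sigma_e(S) \subseteq \sigma_{ap}(S)$, one uses $\sigma_e(S) = \sigma_e(S^*)^*$ together with the general containment $\sigma_e(S) \subseteq \sigma_{ap}(S) \cup \sigma_{ap}(S^*)^*$ and a subnormality argument excluding the purely ``$S^*$'' possibility; alternatively, this half is classical for essentially normal subnormal operators and may be quoted directly. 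Steps (ii) and (iii) are, by contrast, the routine ``Toeplitz algebra is continuous symbols modulo the compacts'' machinery once (ii)(c) is in hand, so essentially all of the real content of the theorem is concentrated in the spectral identity (i).
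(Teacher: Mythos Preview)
The paper does not prove this theorem: it is stated as a result quoted from the literature (Keough, McGuire, Olin) and used as a black box in the subsequent analysis of $\A_S$. There is therefore no ``paper's own proof'' to compare your proposal against.

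That said, your outline is essentially the standard route to this kind of result and is largely sound. Two comments. First, in your argument for (iii) you establish $C^*(S) \subseteq \{T_f + K\}$, but the reverse inclusion requires $\KH \subseteq C^*(S)$, which you do not address. This follows because $S$ is irreducible and $[S^*,S]$ is a nonzero compact in $C^*(S)$ (nonzero since an irreducible normal operator would force $\dim\HH = 1$), so $C^*(S)$, being an irreducible C*-algebra containing a nonzero compact, contains all of $\KH$. Second, your sketch for (i) is correct in spirit for the inclusion $\sigma_{ap}(S)\subseteq\sigma_e(S)$: an eigenvector $x$ of $S$ for $\lambda$ is an eigenvector of $N$, hence of $N^*$, hence of $S^*$, so $\mathrm{span}\{x\}$ reduces $S$, contradicting irreducibility. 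The reverse inclusion you leave essentially unproved; this is indeed where the cited references do real work, and your instinct that the content of the theorem concentrates here is accurate.
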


Using the work of Olin, Thomson, Keough and McGuire describing the C*-algebra generated by a subnormal, essentially normal, irreducible operator (Theorem \ref{C-sub}), we characterize the algebras $\A_S$ for $S$ a subnormal, essentially normal left invertible operator. We begin with a simple connection between spectral data of the operators appearing in Theorem \ref{C-sub} and left invertibility.

\begin{lemma}
	\label{sub-left}
	Let $S$ be a subnormal operator with  $N = mne(S)$. If $N$ is invertible, then $S$ is left invertible with $L = T_{z^{-1}}$  a left inverse.  If $\sigma(N) = \sigma_{ap}(S)$, then $S$ is left invertible if and only if $N$ is invertible. 
	
\end{lemma}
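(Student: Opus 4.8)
The plan is to handle the two assertions separately, with the first doing essentially all the work for the second. For the first assertion, suppose $N = mne(S)$ is invertible, say on $\KK \supseteq \HH$, and let $P : \KK \to \HH$ denote the orthogonal projection. Since $N$ is normal and $0 \notin \sigma(N)$, the function $z \mapsto z^{-1}$ is continuous on $\sigma(N)$, hence lies in $L^\infty(\sigma(N),\mu)$, and the functional calculus gives $f(N) = N^{-1}$; thus $L := T_{z^{-1}} = P N^{-1}\mid_{\HH}$ is a well-defined element of $\BH$. To check $L S = I_{\HH}$, fix $h \in \HH$. Because $S = N\mid_{\HH}$ and $N\HH \subseteq \HH$, we have $Sh = Nh$, so
\[
L S h = P\big(N^{-1}(Nh)\big) = P h = h,
\]
using $N^{-1}N = I_{\KK}$ and $h \in \HH$. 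Hence $L$ is a left inverse of $S$, and by Proposition \ref{equivalent_left} $S$ is left invertible.

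For the second assertion, assume in addition $\sigma(N) = \sigma_{ap}(S)$. If $N$ is invertible, then $S$ is left invertible by the first part, so only the converse needs argument. Suppose $S$ is left invertible. By Proposition \ref{equivalent_left}, $S$ is bounded below, i.e.\ there is $c > 0$ with $\|Sx\| \geq c\|x\|$ for all $x \in \HH$; equivalently, $S$ admits no sequence of unit vectors on which it tends to $0$, so $0 \notin \sigma_{ap}(S)$. By hypothesis $\sigma_{ap}(S) = \sigma(N)$, so $0 \notin \sigma(N)$, and since $N$ is normal this forces $N$ to be invertible, completing the proof.

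I do not expect any genuine obstacle here; the only points requiring a little care are bookkeeping. First, one must observe that once $0 \notin \sigma(N)$ the symbol $z^{-1}$ is continuous (hence bounded on $\sigma(N)$), so that $f(N) = N^{-1}$ and $T_{z^{-1}}$ is a legitimate Toeplitz operator in the sense of the definition preceding Proposition \ref{normal spectrums}. Second, one uses the standard equivalence ``bounded below $\iff$ $0 \notin \sigma_{ap}$'' together with Proposition \ref{equivalent_left} to translate left invertibility of $S$ into the spectral statement $0 \notin \sigma_{ap}(S)$. It is worth noting that the hypothesis $\sigma(N) = \sigma_{ap}(S)$ is exactly the bridge allowing one to pass from $0 \notin \sigma_{ap}(S)$ to $0 \notin \sigma(N)$; the inclusion $\sigma_{ap}(S) \subseteq \sigma(N)$ of Proposition \ref{normal spectrums} runs the wrong way for this, and indeed without the hypothesis the converse can fail (e.g.\ the Bergman shift is bounded below while its minimal normal extension $M_z$ on $L^2$ of the disc is not invertible).
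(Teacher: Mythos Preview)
Your proof is correct and follows essentially the same approach as the paper: verify $T_{z^{-1}}Sh = P(N^{-1}Nh) = h$ for the first assertion, and use ``left invertible $\Rightarrow 0 \notin \sigma_{ap}(S) = \sigma(N)$'' for the converse in the second. Your handling of the second part is in fact slightly cleaner than the paper's, which passes through the essential spectrum; your direct use of the bounded-below characterization from Proposition~\ref{equivalent_left} avoids any implicit Fredholm assumption.
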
   

\begin{proof}
	If $N$ is invertible, then  the Toeplitz operator $T_{z^{-1}} = P(N^{-1})\mid_\HH$ is well defined.  Since $N$ is a normal extension of $S$, we have for each $x \in \HH$ 
	\[
	T_{z^{-1}} S x = T_{z^{-1}}(N x) = P(N^{-1} N x) = P x = x.
	\]
	If $\sigma(N) = \sigma_{ap}(S)$, then $S$ is left invertible implies $0 \notin \sigma_e(S) = \sigma(N)$. 		
\end{proof}

Using the basic theory of subnormal operators, we now describe the structure of $\A_S$ for a prototypical class of subnormal operators.

\begin{thmy}
	\label{ess ex}
	Let $S$ be an analytic left invertible, $\mbox{\ind}(S) = -1$, essentially normal, subnormal operator with $N:=mne(S)$ such that $\sigma(N) = \sigma_{ap}(S)$.  Let $\mathscr{B}$ be the uniform algebra generated by the functions $z$ and $z^{-1}$ on $\sigma_e(S)$. Then 
	\[
	\A_S = \{T_f + K: f \in \mathscr{B}, K \in \KH  \}.
	\]
	Moreover, the representation of each element as $T_f + K$ is unique. 
\end{thmy}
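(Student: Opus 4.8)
The plan is to leverage Theorem \ref{C-sub}, which already describes $C^*(S) = \{T_f + K : f \in C(\sigma(N)), K \in \KH\}$ with unique representations (since $\sigma(N) = \sigma_{ap}(S)$). Since $\A_S \subseteq C^*(S)$, every element of $\A_S$ automatically has the form $T_f + K$; the task is to (a) pin down which $f$ arise, and (b) confirm uniqueness of the representation is inherited. For (b), uniqueness is immediate: it already holds in $C^*(S)$ by part (iii) of Theorem \ref{C-sub}, so it holds a fortiori in the subalgebra $\A_S$. So the heart of the matter is (a): showing the set of attainable symbols is exactly $\mathscr{B}$, the uniform closure of Laurent polynomials in $z, z^{-1}$ on $\sigma_e(S)$.

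First I would establish the inclusion $\A_S \subseteq \{T_f + K : f \in \mathscr{B}, K \in \KH\}$. By Lemma \ref{sub-left}, $S = T_z$ is left invertible with left inverse $L = T_{z^{-1}}$; since $S$ is essentially normal with $\mbox{\ind}(S) = -1$, Section 4.1 gives $S^\dagger = L + K_0$ for some compact $K_0$ (all left inverses differ by compacts, Proposition \ref{left inverses}), and $\A_S = \overline{\mbox{\Alg}}(S, L)$ by Corollary \ref{any left}. Now $\mbox{\Alg}(S,L)$ is spanned by words $S^n L^m = T_z^n T_{z^{-1}}^m$. Using part (ii)(c) of Theorem \ref{C-sub}, $T_z^n T_{z^{-1}}^m = T_{z^n z^{-m}} + (\text{compact})$, so every element of $\mbox{\Alg}(S,L)$ is a Laurent polynomial Toeplitz operator plus a compact, i.e. lies in $\{T_f + K : f \in \mathscr{L}, K \in \KH\}$ where $\mathscr{L}$ is the algebra of Laurent polynomials on $\sigma_e(S)$. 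Taking norm-closures and using (ii)(b) — namely $\|T_f + \KH\| = \|f\|_{\sigma_e(S)}$, which shows the map $f + (\text{functions vanishing on }\sigma_e(S)) \mapsto T_f + \KH$ is isometric, hence that a norm-Cauchy sequence of $T_{f_k}+K_k$ forces $f_k$ Cauchy in $\|\cdot\|_{\sigma_e(S)}$ — the limit symbol lies in $\mathscr{B} = \overline{\mathscr{L}}^{\|\cdot\|_{\sigma_e(S)}}$. Combined with $\KH \subseteq \A_S$ (Theorem \ref{compact}), this gives $\A_S \subseteq \{T_f + K: f\in\mathscr{B}\}$.

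For the reverse inclusion, I must show every $T_f + K$ with $f \in \mathscr{B}$, $K \in \KH$, lies in $\A_S$. Since $\KH \subseteq \A_S$ by Theorem \ref{compact}, it suffices to show $T_f \in \A_S$ for each $f \in \mathscr{B}$. For a Laurent polynomial $p(z) = \sum_{-M}^{N} a_k z^k$, the argument above shows $T_p$ differs from $\sum a_k S^{\max(k,0)} L^{\max(-k,0)} \in \mbox{\Alg}(S,L)$ by a compact, and compacts are in $\A_S$, so $T_p \in \A_S$. For general $f \in \mathscr{B}$, pick Laurent polynomials $p_k \to f$ uniformly on $\sigma_e(S)$; then $T_{p_k} \to T_f$ in norm because $\|T_{p_k} - T_f\| = \|T_{p_k - f}\| \leq \|p_k - f\|_{\sigma(N)} = \|p_k - f\|_{\sigma_e(S)} \to 0$, using that $f \mapsto T_f$ is norm-decreasing and $\sigma(N) = \sigma_{ap}(S) = \sigma_e(S)$ (the last equality from (i) of Theorem \ref{C-sub}). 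Since $\A_S$ is norm-closed and each $T_{p_k} \in \A_S$, we get $T_f \in \A_S$.

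The main obstacle — though it is more bookkeeping than genuine difficulty — is making sure the symbol $f$ of a limit of elements in $\A_S$ actually lies in $\mathscr{B}$ and not merely in $C(\sigma(N))$: one needs the isometry $\|T_f + \KH\| = \|f\|_{\sigma_e(S)}$ from Theorem \ref{C-sub}(ii)(b) to transfer norm-convergence in $\BH/\KH$ back to uniform convergence of symbols on $\sigma_e(S)$, so that the limit symbol sits in the uniform closure of Laurent polynomials restricted to $\sigma_e(S)$, which is precisely $\mathscr{B}$. A subtlety worth a sentence: $\mathscr{B}$ is defined as the uniform algebra generated by $z$ and $z^{-1}$ on $\sigma_e(S)$, and $z^{-1}$ makes sense there because $0 \notin \sigma_e(S) = \sigma(N)$ (left-invertibility of $S$ together with $\sigma(N) = \sigma_{ap}(S)$, via Lemma \ref{sub-left}), so there is no issue defining the generators. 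Everything else is a direct assembly of Theorem \ref{C-sub}, Corollary \ref{any left}, Theorem \ref{compact}, and Lemma \ref{sub-left}.
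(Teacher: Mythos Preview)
Your proof is correct and follows essentially the same route as the paper: both use Lemma \ref{sub-left} and Corollary \ref{any left} to replace $S^\dagger$ by $T_{z^{-1}}$, then invoke Theorem \ref{C-sub} (in particular the isometry $\|T_g + \KH\| = \|g\|_{\sigma_e(S)}$) to identify the attainable symbols as the uniform closure $\mathscr{B}$ of Laurent polynomials on $\sigma_e(S)$. The one point the paper makes explicit that you leave implicit is that analyticity of $S$ gives strong irreducibility (Theorem \ref{SI}) and hence irreducibility, which is a hypothesis of Theorem \ref{C-sub}.
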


\begin{proof}
	By Lemma \ref{sub-left}, $L:= T_{z^{-1}}$ is a left inverse of $S$.  By Corollary \ref{any left}, $\A_S$ is the norm-closed subalgebra of $C^*(S)$ generated by $T_z$ and $T_{z^{-1}}$.  Since $S$ is analytic, it is strongly irreducible, and hence, irreducible.  Therefore by Theorem \ref{C-sub}, each element of $\A_S$ has a unique representation as $T_f + K$ for some $f \in C(\sigma(N))$ and $\sigma(N) = \sigma_{ap}(S) = \sigma_e(S)$. Moreover by Theorem \ref{C-sub}, $L^n = T_{z^{-n}} +K$ for some compact operator $K$.  Since $\A_S$ contains the compacts, it follows that $T_{z^{k}} \in \A_S$ for each $k \in \mathbb{Z}$.  Hence, for each $p \in \mbox{\Alg}(z,z^{-1})$, we have that $T_p \in \A_S$.  Using this information, we now show that $\A_S = \{T_f + K: f \in \mathscr{B}, K \in \KH  \}$.  To do this, it suffices to show that $T_f \in \A_S$ if and only if $f \in \mathscr{B}$. 
	
	First, suppose that $T_f \in \A_S$ for some $f \in C(\sigma(N))$.  Since $\mbox{\Alg}\{T_z, T_{z^{-1}}\}$ is dense in $\A_S$,  for every $\epsilon> 0$ there exists a Laurent polynomial $p \in \mbox{\Alg}(z,z^{-1})$ and compact $K$ such that $\|T_f - (T_p+ K)\| < \epsilon$.  By Theorem \ref{C-sub},
	\[
	\epsilon > \|T_f - (T_p+K)\| = \|T_{f-p} - K\| \geq \|T_{f-p} + \KH\| = \|f - p\|.
	\]
	Hence, $f \in \mathscr{B}$.  For the other inclusion, suppose to the contrary that $f \in \mathscr{B}$ but $T_f \notin \A_S$.  Then there exists a $\delta > 0$ such that for each  $p \in \mbox{\Alg}(z,z^{-1})$ and $K \in \KH$, we have $\|T_f - (T_p + K)\| > \delta$.  In particular, this should hold for any $p$ such that $\|f - p \| < \frac{\delta}{2}$.  Hence
	\[
	\delta \leq \inf_{K \in \KH} \|T_f - (T_p + K)\| = \|T_{f-p} + \KH\| = \|f-p\| < \frac{\delta}{2}
	\]
	which is absurd.  Hence, $T_f$ must be in $\A_S$, completing the proof. 
\end{proof}

Notice that in Theorem \ref{ess ex}, we can drop the requirement that $\sigma(N) = \sigma_{ap}(S)$, so long as the minimal normal extension is invertible. In this case however, one will lose the uniqueness of the representation $T_f + K$ as discussed in Theorem \ref{C-sub}.  As a corollary to Theorem \ref{ess ex}, we get a description of $\A_T$ for analytic Toeplitz operators on $H^2(\mathbb{T})$ with Fredholm index $-1$.

\begin{cor}
	Let $g$ be an analytic function on $\mathbb{T}$ and $X = \ran(g)$ with winding number of $g$ equal to $1$.  Then $\sigma_e(T_g) = X$, and $T_g$ is an analytic left invertible operator with $\mbox{ind}(T_g) = -1$. If $\mathscr{B}$ is the uniform algebra generated by $z$ and $z^{-1}$ on $X$, then we have the following short exact sequence
	\begin{center}
	\begin{tikzpicture}
	\matrix (m) [matrix of math nodes, row sep=3em,
	column sep=3em, text height=1.5ex, text depth=0.25ex]
	{ 0 & \mathscr{K}(H^2(\mathbb{T})) & \A_{T_g} & \mathscr{B} & 0 \\};
	\path[->]
	(m-1-1) edge (m-1-2);
	\path[->]
	(m-1-2) edge node[auto] {$ \iota $} (m-1-3);
	\path[->]
	(m-1-3) edge node[auto] {$ \pi$} (m-1-4);
	\path[->]
	(m-1-4) edge (m-1-5);
	\end{tikzpicture}
\end{center}

Moreover, each element of $\A_{T_g}$ has a unique representation of $T_f + K$ for $f$ in the uniform algebra generated by $g$ and $g^{-1}$ and $K$ compact. 
\end{cor}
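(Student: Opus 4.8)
The plan is to obtain the corollary as an application of Theorem~\ref{ess ex} to $S = T_g$. Two things must be done: first, verify that $T_g$ meets all the hypotheses of that theorem, namely that $T_g$ is an analytic left invertible with $\mbox{\ind}(T_g) = -1$, essentially normal, subnormal, and satisfies $\sigma(N) = \sigma_{ap}(T_g)$ for $N := mne(T_g)$; second, translate the conclusion of Theorem~\ref{ess ex}, which is stated in terms of the abstract Toeplitz operators attached to $N$, into the asserted statement about ordinary Hardy-space Toeplitz operators whose symbols lie in the uniform algebra generated by $g$ and $g^{-1}$ on $\mathbb{T}$.

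Most of the hypotheses are routine. Since $g$ lies in the disc algebra, $M_g$ on $L^2(\mathbb{T})$ is a normal operator leaving $H^2(\mathbb{T})$ invariant with $T_g = M_g \mid_{H^2(\mathbb{T})}$, so $T_g$ is subnormal. Essential normality is the classical compactness of $T_{\varphi\psi} - T_\varphi T_\psi$ for $\varphi, \psi \in C(\mathbb{T})$, applied to $\varphi = g$, $\psi = \bar g$, which renders $T_g^* T_g - T_g T_g^*$ compact. By Theorem~\ref{ThmMurphy}, since $g$ is continuous, non-vanishing, of winding number $1$, $T_g$ is Fredholm with $\mbox{\ind}(T_g) = -1$; and $T_g$ is injective because $T_g f = gf$ for $f \in H^2(\mathbb{T})$ while $g \not\equiv 0$, so by Proposition~\ref{equivalent_left} $T_g$ is left invertible, and it is natural because $\dim \ker(T_g^*) = -\mbox{\ind}(T_g) = 1$. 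The essential spectrum is $\sigma_e(T_g) = g(\mathbb{T}) = X$, via the symbol homomorphism $\mathcal{T} \to C(\mathbb{T})$ with kernel the compacts (Coburn). Finally, $T_g$ is analytic: $T_g^n = T_{g^n}$, so $T_g^n H^2(\mathbb{T}) = g^n H^2(\mathbb{T})$; since $g$ has winding number $1$ it has a simple zero $a \in \mathbb{D}$, and any $f \in \bigcap_n g^n H^2(\mathbb{T})$ has analytic extension divisible by $g^n$ for every $n$, hence vanishing to infinite order at $a$, hence $f = 0$; thus $\bigcap_n T_g^n H^2(\mathbb{T}) = 0$.

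The hypothesis requiring a little care is $\sigma(N) = \sigma_{ap}(T_g)$. Because $T_g$ is analytic, Theorem~\ref{CD} gives $T_g^* \in B_1(\Omega)$ for a small disc $\Omega$ about the origin, so $T_g^*$, and hence $T_g$, is strongly irreducible by Theorem~\ref{SI}, in particular irreducible; Theorem~\ref{C-sub}(i) then gives $\sigma_{ap}(T_g) = \sigma_e(T_g) = X$. On the other hand $N = mne(T_g)$ is the restriction of the normal operator $M_g$ on $L^2(\mathbb{T})$ to the smallest $M_g$-reducing subspace containing $H^2(\mathbb{T})$; a restriction of a normal operator to a reducing subspace cannot enlarge the spectrum, so $\sigma(N) \subseteq \sigma(M_g) = g(\mathbb{T}) = X$. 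Combined with the inclusion $\sigma_{ap}(T_g) \subseteq \sigma(N)$ of Proposition~\ref{normal spectrums}, this forces $\sigma(N) = X = \sigma_{ap}(T_g)$.

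Theorem~\ref{ess ex} now applies: with $\mathscr{B}$ the uniform algebra generated by $z$ and $z^{-1}$ on $\sigma_e(T_g) = X$, every element of $\A_{T_g}$ is uniquely $T_f + K$ with $f \in \mathscr{B}$ and $K \in \mathscr{K}(H^2(\mathbb{T}))$. The map $\A_{T_g} \to \mathscr{B}$ sending $T_f + K$ to $f$ is then well defined, linear and surjective with kernel $\mathscr{K}(H^2(\mathbb{T}))$; it is multiplicative because $T_{fh} - T_f T_h \in \mathscr{K}(H^2(\mathbb{T}))$ and $\mathscr{B}$ is an algebra, and isometric on the quotient because $\|T_f + \mathscr{K}(H^2(\mathbb{T}))\| = \|f\|_X$ (both from Theorem~\ref{C-sub}(ii)); this yields the stated short exact sequence. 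Lastly, for $f \in \mathscr{B} \subseteq C(X)$ the functional calculus for $M_g$ gives $f(N) = M_{f \circ g}$ on the relevant reducing subspace, so $T_f = P(f(N)) \mid_{H^2(\mathbb{T})} = T_{f \circ g}$, the ordinary Toeplitz operator with symbol $f \circ g \in C(\mathbb{T})$; and since $g$ carries $\mathbb{T}$ onto $X$, the assignment $f \mapsto f \circ g$ is an isometric algebra isomorphism of $\mathscr{B}$ onto the uniform algebra generated by $g$ and $g^{-1}$ on $\mathbb{T}$, which converts the unique representation $T_f + K$ into the asserted form. I expect the spectral identity $\sigma(N) = \sigma_{ap}(T_g)$ to be the main obstacle: one must pin down $\sigma(N)$ without computing $mne(T_g)$ explicitly, and one must throughout keep the two meanings of ``Toeplitz operator''---the one attached to $N$ and the ordinary $H^2(\mathbb{T})$-Toeplitz operator---carefully distinguished.
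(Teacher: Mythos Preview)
Your proposal is correct and follows the approach the paper intends: the corollary is stated in the paper without proof, as an immediate consequence of Theorem~\ref{ess ex}, and you have carried out exactly that deduction, carefully verifying each hypothesis and making the translation between the abstract Toeplitz operators $P(f(N))\mid_{\HH}$ and the ordinary Hardy-space Toeplitz operators $T_{f\circ g}$ explicit.

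Two minor remarks. Your analyticity argument via the argument principle (the simple zero $a\in\mathbb{D}$ forces any $f\in\bigcap_n g^nH^2$ to vanish to infinite order at $a$) is correct and clean; the paper's preceding example instead factors $T_\phi=T_{\epsilon_1}T_{\phi_0}$ with $T_{\phi_0}$ invertible and reduces to the analyticity of the shift, which is the same idea in a slightly different dress. Your handling of $\sigma(N)=\sigma_{ap}(T_g)$, by sandwiching $\sigma(N)$ between $\sigma_{ap}(T_g)$ (Proposition~\ref{normal spectrums}) and $\sigma(M_g)=g(\mathbb{T})$ (since $N$ is the restriction of $M_g$ to a reducing subspace), is exactly the right way to avoid computing $mne(T_g)$ explicitly; the paper leaves this verification entirely to the reader.
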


The hypotheses of Theorem \ref{ess ex} are natural, but numerous.  This is to guarantee that $S$ remain within our current focus of study. We remark that even if $S$ is left invertible, irreducible, subnormal, essentially normal operator, it need not be analytic.  

Recall, an operator $R \in \BH$ is said to be \textit{cyclic} if there exists an $x \in \HH$ such that $\{R^n x\}_{n=0}^\infty$ is norm dense in $\HH$. A result by Qing shows that every Cowen-Douglas operator is cyclic \cite{Qing}. While all Cowen-Douglas operators must be cyclic, the adjoints of general subnormal operators need not be cyclic.  A long-standing problem posed by Deddens and Wogen asked which subnormal operators had cyclic adjoints \cite{Conway2}. Feldman answered this question in \cite{FeldmanN}.  A subnormal operator is said to be \textit{pure} if it has no non-trivial normal summand.  Every subnormal operator can be decomposed as $S = S_p \oplus N$, where $S_p$ is pure and $N$ is normal.  The general cyclicity result is as follows:

\begin{thm}[Feldman \cite{FeldmanN}]
	\label{Feldman}
	If $S = S_p \oplus N$ is a subnormal operator, then $S^*$ is cyclic if and only if $N$ is cyclic.  In particular, pure subnormal operators have cyclic adjoints. 
\end{thm}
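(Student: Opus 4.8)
The plan is to prove the two implications separately; one is a short projection argument and the other is in essence Feldman's theorem on pure subnormal operators, so I would organise the proof around that reduction.

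First I would dispose of the easy direction. If $S^{*} = S_{p}^{*}\oplus N^{*}$ has a cyclic vector $u\oplus v\in\HH_{p}\oplus\HH_{N}$, then $\overline{\Span}\{(S_{p}^{*})^{n}u\oplus(N^{*})^{n}v:n\ge 0\}=\HH_{p}\oplus\HH_{N}$, and applying the orthogonal projection onto the normal summand gives $\overline{\Span}\{(N^{*})^{n}v:n\ge 0\}=\HH_{N}$, so $v$ is cyclic for $N^{*}$. Since $N$ is normal, $N^{*}$ is cyclic precisely when $N$ is (a standard fact, seen by passing to the spectral model $M_{z}$ on $L^{2}(\mu)$ and applying complex conjugation). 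Hence $S^{*}$ cyclic forces $N$ cyclic; the substance of the theorem is the converse.

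For the converse I would pass to a functional model. Let $\tilde N:=mne(S)$ act on a scalar $L^{2}(\mu)$-type space, with the usual direct-integral bookkeeping if $\tilde N$ is not $\star$-cyclic, so that $S$ is unitarily equivalent to multiplication by $z$ on a closed invariant subspace of $L^{2}(\mu)$. Purity makes the pure part a proper, nonreducing invariant subspace, so polynomials (equivalently rational functions with poles off the spectrum) are not dense there in the ambient $L^{2}$; Thomson's mean-approximation theorem then supplies a nonempty open set $G$ of analytic bounded point evaluations for the pure summand, hence an analytic family $\lambda\mapsto k_{\lambda}$ of eigenvectors of $S^{*}$ with $(S^{*})^{m}k_{\lambda}=\bar\lambda^{\,m}k_{\lambda}$ for $\lambda\in G$. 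Choosing a uniqueness set $\{\lambda_{j}\}\subset G$ and a superposition $w=\sum_{j}c_{j}k_{\lambda_{j}}$ --- precisely the device used for Cowen--Douglas operators in Section~3, compare the spanning-section arguments in the proofs of Theorem~\ref{zhu theorem} and Theorem~\ref{sbasis} --- I would show that $\overline{\Span}\{(S^{*})^{m}w:m\ge 0\}$ contains $\bigvee_{\lambda\in G}\ker(S^{*}-\lambda)$, since any vector orthogonal to all $(S^{*})^{m}w$ produces an analytic function vanishing on the uniqueness set, hence on $G$. It then remains to absorb the genuine normal summand $N$ into $w$ when $N$ is cyclic, and to show that the orthocomplement of $\bigvee_{\lambda\in G}\ker(S^{*}-\lambda)$ contributes nothing further.

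The hard part is exactly these last two points. One cannot simply invoke spectral disjointness to splice a cyclic vector for $S_{p}^{*}$ to one for $N^{*}$, since $\sigma(S_{p})$ and $\sigma(N)$ may overlap, and the interplay between the analytic point evaluations of the pure part and the spectral multiplicity of the normal part is precisely what must be controlled: it is the normal summand of $S$ itself that carries any failure of multiplicity one. This is the content of Feldman's argument in \cite{FeldmanN}, which runs through Thomson's theorem on $P^{2}(\mu)$ (or the $R^{2}(K,\mu)$ analogue) and the resulting decomposition of the pure part into irreducible ``bands''; a self-contained proof here would essentially reproduce that paper, so the statement is quoted. The ``in particular'' clause is the special case in which the normal summand acts on the zero space and is therefore vacuously cyclic, whence $S^{*}$ is cyclic for every pure subnormal $S$.
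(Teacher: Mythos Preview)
The paper does not prove this theorem; it is quoted from Feldman \cite{FeldmanN} without argument, so there is no ``paper's own proof'' to compare against. Your proposal is therefore not competing with anything in the text, and you correctly recognise this at the end when you write that ``the statement is quoted.''

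As a sketch of Feldman's argument your outline is accurate in its broad strokes: the easy direction via projection onto the normal summand is correct (and the remark that cyclicity of $N$ and $N^{*}$ coincide for normal $N$ is the right observation), and for the hard direction you correctly identify Thomson's theorem on $P^{2}(\mu)$ and the resulting analytic bounded point evaluations as the engine. Your honest acknowledgment that splicing the pure and normal parts together when their spectra overlap is the genuine obstacle, and that carrying this out would amount to reproducing \cite{FeldmanN}, is exactly right. There is no gap to name here beyond the one you have already named yourself.
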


Having a cyclic vector clearly is not sufficient for an operator to be Cowen-Douglas.  However, Theorem \ref{Feldman} is a condition of necessity. Thomson showed in \cite{Thomson} that if $S$ is a pure, cyclic subnormal operator, then $S^*$ is Cowen-Douglas.  However, as far the author is aware, there is no known elementary equivalence to guarantee $S^*$ is Cowen-Douglas.

We remark that the similarity orbit of subnormal operators was classified by Conway \cite{Conway3}.  He showed two subnormal operators are similar if and only if the scalar valued spectral measure associated to the minimal normal extensions were the same. In this case, there is no need to investigate the $K_0$ group of the commutant.  Rather, the spectral data encodes all the information about the similarity orbit.

\section*{Acknowledgements}

The author would like to thank David Pitts for his mathematical insights and intuition.

\appendix

%% Appendices go here (if you have them)

%%%%%%%%%%%%%%%%%%%%%%%%%%%%%%%%%%%%%%%%%
%%%%%%%%%%%%%%%%%%%%%%%%%%%%%%%%%%%%%%%%%
%%%%%%%%%%%%%%%%%%%%%%%%%%%%%%%%%%%%%%%%%
%%%%%%%%%%%%%%%%%%%%%%%%%%%%%%%%%%%%%%%%%
\bibliographystyle{plain}
%\bibliography{References}

%% BibTeX is your friend
%% Bibliography goes here (You better have one)
%% BibTeX is your friend
%\bibliographystyle{unsrt}
\bibliography{biblo}
%\nocite{*}
%%%%%%%%%%%%%%%%%%%%%%%%%%%%%%%%%%%%%%%%%
%%%%%%%%%%%%%%%%%%%%%%%%%%%%%%%%%%%%%%%%%
%%%%%%%%%%%%%%%%%%%%%%%%%%%%%%%%%%%%%%%%%
%%%%%%%%%%%%%%%%%%%%%%%%%%%%%%%%%%%%%%%%%

%% Index go here (if you have one)
\end{document}